\tikzset{commutative diagrams/.cd,arrow style=tikz,diagrams={>=stealth'}}
\newcommand{\RR}[0]{\mathbb{R}}
\newcommand{\Z}{\mathbb{Z}}
\newcommand{\R}{\mathbb{R}}
\newcommand{\LL}{{\mathcal L}}
\newcommand{\wt}{\widetilde}
\newcommand{\mc}{\mathcal}
\newcommand{\ol}{\overline}
\newcommand{\del}{\partial}
\DeclareMathOperator{\fr}{fr}
\DeclareMathOperator{\Fix}{Fix}
\DeclareMathOperator{\stab}{stab}
\newcommand{\FF}{\mathcal{F}}
\newcommand{\orb}{\mathcal{O}}
\newcommand\tsim{\kern-.4em\sim}
\newcommand\ssm{\smallsetminus}
\renewcommand{\phi}{\varphi}
\renewcommand{\epsilon}{\varepsilon}
\DeclareMathOperator{\intr}{int}
\DeclareMathOperator{\Homeo}{Homeo}
\DeclareMathOperator{\Stab}{stab}
\newcommand{\univ}{S^1_{\mathrm{univ}}}
\numberwithin{equation}{section}
\newtheorem{theorem}[equation]{Theorem}
\newtheorem{lemma}[equation]{Lemma}
\newtheorem{proposition}[equation]{Proposition}
\newtheorem{corollary}[equation]{Corollary}
\newtheorem{conjecture}[equation]{Conjecture}
\newtheorem{claim}[equation]{Claim}
\newtheorem{observation}[equation]{Observation}
\theoremstyle{definition}
\newtheorem{question}[equation]{Question}
\declaretheorem[style=definition,qed=$\lozenge$,sibling=theorem]{definition}
\declaretheorem[style=definition,qed=$\lozenge$,sibling=theorem, name=Construction]{construction}
\begin{document}

\title[]{Simultaneous universal circles}
\author[M.P. Landry]{Michael P. Landry}
\address{Department of Mathematics\\
Saint Louis University}
\email{\href{mailto:michael.landry@slu.edu}{michael.landry@slu.edu}}
\author[Y.N. Minsky]{Yair N. Minsky}
\address{Department of Mathematics\\ 
Yale University}
\email{\href{mailto:yair.minsky@yale.edu}{yair.minsky@yale.edu}}
\author[S.J. Taylor]{Samuel J. Taylor}
\address{Department of Mathematics\\ 
Temple University}
\email{\href{mailto:samuel.taylor@temple.edu}{samuel.taylor@temple.edu}}
\date{\today}
\thanks{Landry was partially supported by NSF grant DMS-2405453.
Minsky was partially supported by DMS-2005328.
Taylor was partially supported by DMS-2102018 and the Simons Foundation.}

\begin{abstract}
Let $\phi$ be a pseudo-Anosov flow on a closed oriented atoroidal $3$-manifold $M$. We show that if $\mc F$ is any taut foliation almost transverse to $\phi$, then the action of $\pi_1(M)$ on the boundary of the flow space, together with a natural collection of explicitly described monotone maps, defines a universal circle for $\mc F$ in the sense of Thurston and Calegari--Dunfield. 
\end{abstract}
\maketitle

\setcounter{tocdepth}{1}

\section{Introduction}

Starting with Thurston's seminal work in the late 1990s \cite{thurston1997three, thurston_circles2}, various notions of \emph{universal circle} have been formulated with the goal of associating a natural circle action to a geometric, topological, or dynamical structure on a three-dimensional manifold \cite{CalDun_UC, calegari2006universal, fenley2012ideal, schleimer2019veering, calegari2024zippers}. Such universal circles have been used to great effect by a number of authors (e.g. \cite{ Calegari2000Rcovered, Fenley2002Rcovered, frankel_closedorbits, 
BFM, boyer2023recalibrating}), 
and it is now a fundamental problem 
to relate these circle actions in order to better understand interactions among their underlying structures.

In this article we establish the connection between the two most well-known universal circles, namely those associated to taut foliations and to pseudo-Anosov flows. 
We describe some of the main objects before summarizing our results.
All of the 3-manifolds we consider are closed, connected, and oriented.

\subsection{Circle actions from taut foliations}

Given a taut foliation $\FF$ of an irreducible, atoroidal 3-manifold $M$, a
\emph{universal circle} for $\FF$ is a way of organizing the ideal geometry of the leaves
of $\mc F$. It consists of a faithful representation 
\[
\rho\colon \pi_1(M)\to \Homeo^+(\univ)
\]
where $\univ$ is a circle, 
together with a family of monotone maps.
For each leaf of the lift of $\FF$ to the universal cover of $M$, there is an associated monotone map from $\univ$ to the ideal boundary of the leaf. The collection of these ideal boundaries is acted upon by $\pi_1(M)$, and the monotone maps are in particular required to intertwine this action with $\rho$. For more, see \Cref{UCdef}.

Thurston and Calegari--Dunfield proved that every taut foliation of a closed, oriented, irreducible, atoroidal 3-manifold has a universal circle \cite{CalDun_UC}. We remark that the word ``universal" does not refer to any kind of uniqueness: a universal circle for such a foliation $\mc F$ is in fact never unique, although each universal circle does have a unique minimal quotient in a certain sense (see \cite[\S 5.1]{Calegari_promoting}).

\subsection{Circle actions from pseudo-Anosov flows}

Let $\phi$ be a pseudo-Anosov flow on a 3-manifold $M$ (see \Cref{sec:pAflows}), and let $\wt \phi$ be its lift to the universal cover $\wt M$ of $M$. Fenley and Mosher proved that the quotient of $\wt M$ by the flowlines of $\wt\phi$ is a plane $\orb$, on which $\pi_1(M)$ acts by orientation preserving homeomorphisms. The stable and unstable foliations of $\phi$ induce singular foliations on $\orb$, and Fenley used these foliations to construct a natural compactification of $\orb$ by a circle $\del \orb$, such that the $\pi_1$-action on $\orb$ induces a faithful action on $\del\orb$ by orientation preserving homeomorphisms. See \Cref{sec:flowspace}.

\subsection{Summary of main results}

This article is concerned with the following situation: $\phi$ is a pseudo-Anosov flow on an an irreducible atoroidal 3-manifold $M$, and $\FF$ is a taut foliation of $M$ almost transverse to $\phi$. This means that $\FF$ is cooriented and positively transverse to a ``dynamic blowup" of $\phi$, see \Cref{sec:pAflows}. We will also refer to this blowup as $\phi$.

\begin{figure}
\centering
\includegraphics[height=2.5in]{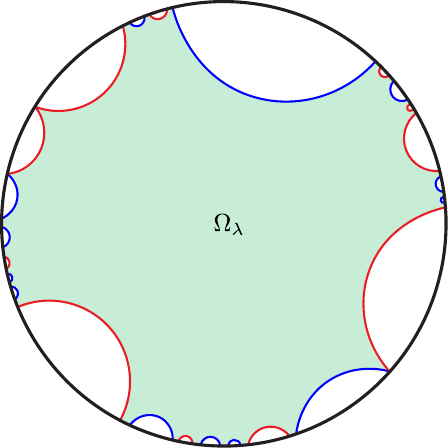}
\caption{For any leaf $\lambda$ of $\wt \FF$, its projection $\Omega_\lambda$ to the flow space $\orb$ of $\phi$ is a region bounded by lines in leaves of the stable and unstable foliations. These frontier components can be proper subsets of leaves, and can share ideal points in the ideal boundary of $\orb$. Although this picture may evoke geodesics in the hyperbolic plane, there is no natural geometric structure on $\orb$.}
\label{fig:shadow}
\end{figure}

Denote the lift of $\FF$ to $\wt M$ by $\wt \FF$. Each leaf $\lambda$ of $\wt\FF$ can be projected to the plane $\orb$, and Fenley showed that its projection $\Omega_\lambda$ is a region whose frontier is a disjoint union of properly embedded lines in leaves of the stable and unstable foliations \cite{Fen09}. The accumulation points of $\Omega_\lambda$ in $\del \orb$ form an uncountable closed set whose complement is a collection of open intervals spanned by the frontier components. See \Cref{fig:shadow}.

In \Cref{sec:monotonemaps}, we show that there is a natural monotone map $\pi_\lambda$ from $\del\orb$ to the ideal boundary of $\lambda$ that in particular collapses all these open intervals. We take this as our monotone map $\pi_\lambda\colon \partial \mc O \to \partial \lambda$ for each leaf $\lambda$ of $\wt \FF$. After verifying the other conditions of the definition, we prove this gives a universal circle for $\FF$ in \Cref{simUC}. Consequently:

\begin{theorem}[Universal circles from flow space boundaries]\label{maintheorem}
Let $M$ be an atoroidal $3$-manifold with a pseudo-Anosov flow $\varphi$, and let $\FF$ be any taut foliation almost transverse to $\phi$.
Then the action $\pi_1(M) \curvearrowright \del\orb$, together with the family of monotone maps $\{\pi_\lambda\}$, defines a universal circle for $\FF$. 
\end{theorem}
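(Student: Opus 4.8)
The plan is to verify, one by one, the defining conditions of \Cref{UCdef} for the triple consisting of the circle $\univ:=\del\orb$, the Fenley action $\rho\colon\pi_1(M)\to\Homeo^+(\del\orb)$ from \Cref{sec:flowspace}, and the family of monotone maps $\{\pi_\lambda\colon\del\orb\to\del\lambda\}$ constructed in \Cref{sec:monotonemaps}. Several ingredients are already in hand: Fenley's work gives that $\del\orb$ is a circle on which $\rho$ acts faithfully by orientation-preserving homeomorphisms, while \Cref{sec:monotonemaps} gives that each $\pi_\lambda$ is a monotone surjection onto the ideal circle $\del\lambda$ of the leaf $\lambda$ (which is a genuine circle, since $M$ is atoroidal and hence the leaves of $\wt\FF$ are hyperbolic). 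What remains is (i) equivariance of the family $\{\pi_\lambda\}$ under $\rho$, and (ii) the coherence conditions relating $\pi_\lambda$ and $\pi_\mu$ for distinct leaves $\lambda,\mu$ of $\wt\FF$.

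For (i) I would argue by naturality. The deck action of $\pi_1(M)$ on $\wt M$ commutes with the flow-space projection $\wt M\to\orb$, so $g\cdot\Omega_\lambda=\Omega_{g\lambda}$ for every $g\in\pi_1(M)$; hence $g$ carries the accumulation set $\ol{\Omega_\lambda}\cap\del\orb$ and its complementary frontier intervals to those of $\Omega_{g\lambda}$, matching the frontier components of $\Omega_\lambda$ (lines in stable and unstable leaves) with those of $\Omega_{g\lambda}$. Since the construction of $\pi_\lambda$ in \Cref{sec:monotonemaps} uses only this frontier data, together with the canonical identification of each frontier component with a properly embedded line in $\lambda$, it is natural; a short diagram chase then gives $\pi_{g\lambda}\circ\rho(g)=g\circ\pi_\lambda$.

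The substance of the argument is (ii), which I take to amount to: (a) for any two leaves $\lambda\neq\mu$, each gap of $\pi_\lambda$ is unlinked in $\del\orb$ with each gap of $\pi_\mu$, and (b) $\pi_{\lambda_n}\to\pi_\lambda$ whenever $\lambda_n\to\lambda$ in $\wt\FF$. By \Cref{sec:monotonemaps} the gaps of $\pi_\lambda$ are accounted for by the frontier intervals of $\Omega_\lambda$, each spanned by a frontier component, so (a) becomes a statement about how the frontiers of $\Omega_\lambda$ and $\Omega_\mu$ sit in $\ol\orb$; I would split it according to whether $\lambda$ and $\mu$ are comparable in the leaf space of $\wt\FF$. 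When they are incomparable, the gaps should be pairwise disjoint up to shared endpoints, which reduces to showing that a frontier line of $\Omega_\lambda$ and a frontier line of $\Omega_\mu$ --- each a properly embedded line in a leaf of the singular stable or unstable foliation of $\orb$ --- cannot have linked endpoint pairs on $\del\orb$; this is exactly the kind of statement controlled by Fenley's structure theory for these singular foliations and for the shadows of leaves of $\wt\FF$. When they are comparable, say $\lambda$ below $\mu$, the gaps of $\pi_\mu$ should nest into those of $\pi_\lambda$ in the direction of the coorientation; here the coorientation together with transversality of $\FF$ to $\phi$ constrains how $\Omega_\lambda$ and $\Omega_\mu$ overlap in $\orb$ and identifies the relevant frontier components, forcing $\ol{\Omega_\mu}\cap\del\orb$ to refine $\ol{\Omega_\lambda}\cap\del\orb$ appropriately. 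Condition (b) then follows from continuity of $\lambda\mapsto\Omega_\lambda$ and of its frontier data, again via Fenley's results.

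I expect the principal obstacle to be the incomparable case of (a). Because $\orb$ carries no natural metric, non-linking of frontier components and the precise bookkeeping of which arcs of $\del\orb$ are collapsed by $\pi_\lambda$ rather than by $\pi_\mu$ must be carried out purely in terms of the combinatorics of the singular stable and unstable foliations of $\orb$ and Fenley's description of $\del\orb$. A secondary point requiring care --- though it is already handled in \Cref{sec:monotonemaps} --- is that $\pi_\lambda$ genuinely surjects onto the full ideal circle $\del\lambda$, i.e.\ that every ideal point of $\lambda$ is detected by its shadow $\Omega_\lambda$; granting that, \Cref{maintheorem} follows by assembling (i), (ii), and the cited properties of $\del\orb$.
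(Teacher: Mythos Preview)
Your proposal misidentifies the conditions in \Cref{UCdef} that need to be verified, and as a result the ``substance of the argument'' you outline is both unnecessary and incorrectly analyzed.

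Condition (3) of \Cref{UCdef} does not ask that gaps of $\pi_\lambda$ and $\pi_\mu$ be unlinked for all pairs $\lambda,\mu$; it applies only when $\lambda$ and $\mu$ are \emph{not} connected by a transversal, and it asks that the entire core of $\pi_\lambda$ be contained in a single gap of $\pi_\mu$. In that situation the shadows $\Omega_\lambda$ and $\Omega_\mu$ are disjoint in $\orb$, so $\Omega_\lambda$ lies on one side of some frontier component of $\Omega_\mu$; hence $\del_\infty\Omega_\lambda$, and therefore the core of $\pi_\lambda$, sits in the span of that frontier component, which is a gap of $\pi_\mu$. That is the entire argument. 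Your picture of this case is inverted: for incomparable leaves the gaps are nested, not ``pairwise disjoint up to shared endpoints,'' and there is no separate comparable case to treat at all. Moreover, your plan leans on the claim that the gaps of $\pi_\lambda$ are ``accounted for by the frontier intervals of $\Omega_\lambda$''; this is the content of \Cref{th:gaps}, which is substantially harder and is not needed for \Cref{maintheorem}.

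Your treatment of continuity is also too vague. The paper's \Cref{continuity} does not proceed via ``continuity of $\lambda\mapsto\Omega_\lambda$ and of its frontier data''; it uses Fenley's \Cref{endpointsvarycontinuously}, which says that the ideal endpoint of a foliation ray varies continuously as one moves to nearby leaves of $\wt\FF$, together with density of ray endpoints and monotonicity of each $\pi_\lambda$. Your equivariance argument (i) is fine and matches the paper's.
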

Further, we prove in \Cref{prop:when_min} that this is a ``minimal" universal circle unless $\phi$ is a skew Anosov flow which is not regulating for $\FF$; see \Cref{sec:minimal} for definitions. 

\smallskip

As a consequence, if $\FF$ is almost transverse to 2 distinct (not orbit equivalent) pseudo-Anosov flows, \Cref{maintheorem}, together with the main result of \cite{BFM}, gives 2 distinct (not conjugate) minimal universal circles for $\FF$. It is simple to build examples where this occurs using the Gabai--Mosher construction of pseudo-Anosov flows almost transverse to finite depth foliations. While this construction is not fully in the literature, it was partially written down by Mosher in \cite{Mos96} and is being finished by Landry--Tsang in \cite{LandryTsangStep1} and work in progress.

\smallskip
The proof of \Cref{maintheorem} is relatively nonconstructive, in the sense that it does not give an explicit description of the collapsing maps $\{\pi_\lambda\}$. While it tells us that the intervals in $\partial \orb$ spanned by frontier components of $\Omega_\lambda$ are collapsed by $\pi_\lambda$, it does not say whether any other collapsing happens. Via a detailed analysis of the flow space of $\phi$, we are able to give a clean characterization of each gap (i.e. interior of a nontrivial point preimage): 

\begin{theorem}[Characterizing the gaps] \label{th:gaps}
Each gap of $\pi_\lambda$ is spanned by a finite sequence of frontier components of $\Omega_\lambda$, each sharing an ideal endpoint with the next. Further, the length of such a sequence is uniformly bounded independently of $\lambda$. 
\end{theorem}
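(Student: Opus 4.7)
The plan has three stages. Let $K_\lambda \subset \partial\orb$ be the set of accumulation points of $\Omega_\lambda$ on $\partial\orb$. From \Cref{sec:monotonemaps}, $\pi_\lambda$ collapses each component of $\partial\orb \setminus K_\lambda$, and these components correspond bijectively to the frontier components of $\Omega_\lambda$. The starting observation is that any chain $F_1, \ldots, F_n$ of frontier components with $F_i$ and $F_{i+1}$ sharing an ideal endpoint automatically produces a single fiber of $\pi_\lambda$: each shared endpoint is a two-sided limit of collapsed intervals, hence maps to the same point of $\partial\lambda$ as both neighbors. So the interval in $\partial\orb$ spanned by such a chain lies in one fiber, contributing a gap (or part of one).

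Next I would show every gap $J$ has this form. Let $p_-, p_+$ be the endpoints of $J$; both must lie in $K_\lambda$, since any point outside $K_\lambda$ is interior to a single collapsed interval and cannot be a gap endpoint. Each component of $J \setminus K_\lambda$ is a collapsed interval corresponding to a unique frontier component of $\Omega_\lambda$. The key claim is that $\overline{J} \cap K_\lambda$ is finite and that each of its interior points is a common ideal endpoint of the two adjacent frontier components. To see the latter, if $q \in J \cap K_\lambda$ is interior to the gap, then both sides of $q$ lie in the fiber over $\pi_\lambda(q)$; combined with the construction of $\pi_\lambda$ from the flow projection $\Omega_\lambda \to \lambda$, this forces frontier components on both immediate sides of $q$ to share $q$ as a common ideal endpoint.

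The third stage is the uniform bound. A chain of frontier components with consecutive shared endpoints corresponds to a chain of leaves of the stable/unstable foliations of $\wt\phi$ that share ideal endpoints in $\partial\orb$ (a ``non-separated'' or ``perfect fit'' configuration). For pseudo-Anosov flows on atoroidal $3$-manifolds, Fenley has shown that such chains have uniformly bounded length: arbitrarily long chains would, via a compactness and $\pi_1$-action argument, produce an invariant configuration yielding a $\ZZ^2$ subgroup of $\pi_1(M)$, contradicting atoroidality. The resulting bound depends only on $\phi$, giving the uniform $N$ claimed.

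The main obstacle is the local analysis in the second stage: ruling out accumulation of collapsed intervals inside $J$ and identifying, at each interior articulation point $q$, precisely the two adjacent frontier components sharing $q$. This requires tracking how sequences in $\Omega_\lambda$ converging to $q$ project through the flow to sequences in $\lambda$ converging to $\pi_\lambda(q)$, and then applying Fenley's description of the local structure of $\partial\orb$ (only finitely many stable/unstable leaves have any given ideal endpoint) to pin down the chain structure. Once this local picture is established, the remaining work is essentially a bookkeeping application of the atoroidal chain-length bound.
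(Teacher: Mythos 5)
There is a genuine gap, in two places. First, your Stage 2 --- showing that $\overline{J}\cap K_\lambda$ is finite, i.e.\ ruling out accumulation of collapsed intervals inside a gap --- is the entire content of the theorem, and you flag it as ``the main obstacle'' without supplying an argument. The difficulty is exactly that an interior point $q$ of $J\cap K_\lambda$ need \emph{not} have frontier components immediately adjacent on both sides: a priori $J\cap K_\lambda$ could be a Cantor set, with infinitely many maximal frontier chains inside $J$. The paper's proof of \Cref{thm:gaps} handles this with machinery that has no counterpart in your sketch: (a) \Cref{lem:nointervalcollapse}, which rules out a whole subinterval of the limit set collapsing and itself rests on the finiteness of spike regions sharing an ideal point (\Cref{finitespikes}, proved using Candel's leafwise hyperbolic metric, the uniform fellow-traveling constant of \Cref{lem:omniFenley2}, and a lower bound on widths of Reeb annuli); and (b) a dichotomy for the remaining infinitely many chains: those with limit subchains produce infinitely many spike regions at one ideal point of $\del\lambda$ (contradicting \Cref{finitespikes}), while those without are made of branching/blowup leaves, and the coherence analysis of \Cref{lem:coherent} and \Cref{findgoodray} produces rays in $\lambda$ fellow-traveling periodic flowlines, forcing all these chains to be fixed by a single group element and contradicting \Cref{zplusz}. ``Tracking sequences through the flow projection'' does not substitute for this; the argument genuinely needs both the foliation-side input (spike regions) and the group-theoretic input (Shalen's theorem behind \Cref{zplusz}).

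Second, your Stage 3 rests on a claim that is false as stated: there is no purely flow-theoretic, atoroidality-only uniform bound on the length of chains of stable/unstable leaves in $\orb$ in which consecutive leaves share an ideal endpoint. Chains of lozenges meeting along corners (\Cref{sec:lozenges}) already give arbitrarily long perfect-fit chains for pseudo-Anosov flows on atoroidal manifolds, and in the skew Anosov case (which occurs on hyperbolic manifolds) every leaf sits in an infinite such chain. The uniform bound in \Cref{lem:bounded_chain} is not about arbitrary perfect-fit chains but about chains of \emph{frontier components of $\Omega_\lambda$}, and it is obtained by splitting into the branching/blowup case (bounded by \Cref{zplusz}, whose uniformity comes from Fenley's finiteness of branching leaves up to the $\pi_1$-action) and the regular case (where each component is a limit subchain and yields a spike region, bounded by \Cref{finitespikes}, whose constant $\frac{2\delta}{c}+2$ depends on the foliation through $\delta$ and the minimal Reeb annulus width $c$). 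So the uniformity you want cannot be delegated to a result about the flow alone; it necessarily passes through the transverse foliation.
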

Translating the first sentence of this characterization to \Cref{fig:shadow}, it tells us that two points are identified by the monotone map $\pi_\lambda \colon \partial \orb \to \partial \lambda$ if and only if there is an arc in $\partial \orb$ between them meeting at most finitely many points in the limit set of $\Omega_\lambda$.

\subsection{The flows (almost) transverse to a foliation}

We conclude the introduction by discussing the following question. 

\begin{question}\label{qclassify}
Let $\FF$ be a taut foliation of an atoroidal 3-manifold $M$. How can we classify the collection of pseudo-Anosov flows almost transverse to $\FF$? 
\end{question}
Here, flows are considered up to orbit equivalence, by which we mean two flows are equivalent if there is a homeomorphism $M\to M$, isotopic to the identity, which sends flowlines to flowlines in an orientation-preserving manner.

\Cref{qclassify} has been completely answered if $\FF$ is $\R$-covered, meaning that the leaf space of its lift to the universal cover of $M$ is a line. 
Calegari and Fenley separately showed in \cite{Calegari2000Rcovered} and \cite{Fenley2002Rcovered} that in this case $\FF$ it is transverse to a regulating (see \Cref{sec:minimal}) pseudo-Anosov flow, which Fenley additionally showed in \cite[Theorem G]{fenley2012ideal} has no perfect fits (see \Cref{sec:flowspace}).
Fenley further showed in \cite{Fenley2013rigidity} that this flow is the unique pseudo-Anosov flow transverse to $\FF$, unless $\FF$ is topologically equivalent to the stable or unstable foliation of a skew Anosov flow $\phi_{\text{skew}}$, up to collapsing pockets of parallel leaves.
In the latter case, $\FF$ is transverse to exactly one additional flow: a new flow obtained by ``tilting" $\phi_{\text{skew}}$ slightly, which is equivalent to $\phi_{\text{skew}}$ by structural stability (see \cite{fenley2005regulating}). This tilted flow is not regulating for $\FF$. Finally, we remark that in the $\R$-covered case transversality and almost transversality are equivalent, so this resolves \Cref{qclassify} if $\FF$ is $\R$-covered.

Perhaps the simplest class of foliations which are not $\R$-covered is the \emph{depth one foliations}, i.e. foliations with finitely many compact leaves and all other leaves accumulating only on the compact ones.
Junzhi Huang has shown recently that if $\FF$ is a depth one foliation and $\phi$ is a pseudo-Anosov flow with no perfect fits transverse to $\FF$, the ``universal circle of leftmost sections" constructed by Calegari--Dunfield can be identified equivariantly with $\del \orb$ \cite{Huang_UC}. In this setting, Huang further shows that
 $\phi$ is the unique pseudo-Anosov flow with no perfect fits transverse to $\FF$.
Using \Cref{th:gaps} and Huang's result, we prove in \Cref{thm:depth1rigidity} that in fact $\phi$ is the unique pseudo-Anosov flow 
transverse to $\FF$, perfect fits or not. 
More generally, we make the following conjecture.
\begin{conjecture}\label{rigidconj}
Let $\FF$ be a taut foliation of an atoroidal 3-manifold $M$ which is not $\R$-covered. If $\FF$ is almost transverse to a pseudo-Anosov flow $\phi$ with no perfect fits, then $\phi$ is the unique pseudo-Anosov flow almost transverse to $\phi$.
\end{conjecture}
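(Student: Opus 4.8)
Since \Cref{rigidconj} is phrased as a conjecture, what follows is a strategy rather than a proof, modeled on the depth-one case (\Cref{thm:depth1rigidity}). Let $\psi$ be any pseudo-Anosov flow almost transverse to $\FF$; the goal is to show that $\psi$ is orbit equivalent to $\phi$. By \Cref{maintheorem}, the action of $\pi_1(M)$ on $\del\orb_\phi$ together with the family $\{\pi^\phi_\lambda\}$, and likewise the action on $\del\orb_\psi$ together with $\{\pi^\psi_\lambda\}$, are universal circles for $\FF$.

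The first step is that both of these universal circles are \emph{minimal}. Since $\phi$ has no perfect fits it is not a skew Anosov flow --- skew Anosov flows always have perfect fits --- so \Cref{prop:when_min} gives minimality of $(\del\orb_\phi,\{\pi^\phi_\lambda\})$. For $\psi$, \Cref{prop:when_min} gives minimality unless $\psi$ is skew Anosov and not regulating for $\FF$; one argues that in that case $\FF$ must be $\R$-covered (up to collapsing pockets of parallel leaves it is a stable or unstable foliation of $\psi$, as in the $\R$-covered classification recalled in the introduction), contrary to hypothesis. So $(\del\orb_\psi,\{\pi^\psi_\lambda\})$ is minimal too.

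The crux is to identify these two minimal universal circles, i.e.\ to produce a $\pi_1(M)$-equivariant homeomorphism $g\colon\del\orb_\psi\to\del\orb_\phi$ with $\pi^\psi_\lambda = \pi^\phi_\lambda\circ g$ for every leaf $\lambda$ of $\wt\FF$. When $\FF$ is depth one this follows from Huang's theorem \cite{Huang_UC}, which identifies $\del\orb_\phi$ with the Calegari--Dunfield universal circle; then $\del\orb_\psi$ has a $\pi_1(M)$-equivariant monotone quotient map onto $\del\orb_\phi$ compatible with the $\pi_\lambda$'s, and it remains to see that this map collapses no nondegenerate interval. This is exactly where \Cref{th:gaps} would be used: a gap of the quotient map lies inside a gap of $\pi^\psi_\lambda$ for every $\lambda$, each such gap is a chain of at most $N$ frontier components of $\Omega^\psi_\lambda$ with $N$ independent of $\lambda$, and as $\lambda$ varies over $\wt\FF$ these chains shrink, so their intersection is a single point, a contradiction. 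Thus the quotient map is a conjugacy; a second application of \Cref{th:gaps} shows it carries endpoints of stable and unstable leaves of $\psi$ to those of $\phi$, since the collapsing pattern of the maps $\pi_\lambda$ records precisely the frontier components of the regions $\Omega_\lambda$ and every stable or unstable leaf appears as, or as a limit of, such frontier components. Hence $g$ is a conjugacy of ideal boundaries of the kind to which the main result of \cite{BFM} applies, and that result recovers $\psi$ from $g$ up to orbit equivalence, giving that $\psi$ is orbit equivalent to $\phi$.

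The main obstacle is the identification step in the general case: unlike the depth-one setting there is no known substitute for Huang's theorem, and two minimal universal circles for the same taut foliation need not be conjugate, since the universal circle of a foliation is not canonical. What should rescue the argument is that $\del\orb_\phi$, being the ideal boundary of a perfect-fit-free pseudo-Anosov flow, is rigid enough to be the minimal quotient of \emph{every} universal circle for $\FF$, with \Cref{th:gaps} supplying the combinatorial control needed to prove that the comparison maps are injective. Establishing that $\del\orb_\phi$ is the canonical minimal universal circle for $\FF$ is precisely the content of \Cref{rigidconj}.
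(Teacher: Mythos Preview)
This statement is a \emph{conjecture} in the paper, so there is no proof to compare against. What is worth comparing is your depth-one sketch with the paper's actual proof of the depth-one case (\Cref{thm:depth1rigidity}), since that is the only instance of the conjecture the paper establishes.

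Your minimality step is correct but overcomplicated: since $\FF$ is assumed not $\R$-covered, the second bullet in \Cref{prop:when_min} already fails, so both $(\del\orb_\phi,\{\pi^\phi_\lambda\})$ and $(\del\orb_\psi,\{\pi^\psi_\lambda\})$ are minimal without any separate argument for $\psi$.

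More importantly, your depth-one sketch diverges from the paper's route and has a gap. You assert that Huang's identification of $\del\orb_\phi$ with the Calegari--Dunfield circle yields a $\pi_1(M)$-equivariant monotone map $\del\orb_\psi\to\del\orb_\phi$ compatible with the $\pi_\lambda$'s, but no such map is furnished: Huang's theorem applies only to flows \emph{without perfect fits}, so it says nothing about $\del\orb_\psi$ until one knows $\psi$ has no perfect fits, and the Calegari--Dunfield circle does not carry a universal property producing maps from arbitrary universal circles into it. The paper instead proceeds in the opposite order: it first proves (\Cref{onlyperfectfits}) that $\psi$ must also have no perfect fits, by using \Cref{prop:return} (which in turn relies on \Cref{thm:gaps}) to transfer periodic-orbit data through the first-return dynamics on depth-one leaves, and only then invokes Huang's uniqueness statement. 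So \Cref{th:gaps} does enter, but as an ingredient in the dynamical argument of \Cref{prop:return}, not as a tool for comparing universal circles directly.

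Your final paragraph is accurate: the identification step is exactly the missing ingredient in general, and your observation that two minimal universal circles for the same foliation need not be conjugate is the reason no soft argument can close the gap. The paper records this as \Cref{circleconj}.
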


We also propose the following general strategy toward \Cref{qclassify}. First, given a taut foliation $\FF$ of an irreducible, atoroidal 3-manifold $M$, let $\mathrm{PA}(\FF)$ be the set of all pseudo-Anosov flows that are almost transverse to $\FF$, up to orbit equivalence. Each such flow $
\phi$ gives rise to a faithful $\pi_1$-action on the boundary of its orbit space $\orb_\phi$, as shown by Fenley. Also, let $\mathrm{UC}(\FF)$ be the set of $\pi_1(M)$ circle actions arising from universal circles of $\FF$, considered up to semiconjugacy.

\begin{conjecture}\label{circleconj}
  Let $\FF$ be a taut foliation of an atoroidal 3-manifold $M$ which is not $\R$-covered.
The assignment $\varphi \mapsto (\pi_1(M)\curvearrowright\partial \orb_\phi)$ from \Cref{maintheorem} determines a bijection
\[
\mathrm{PA}(\FF) \to \mathrm{UC}(\FF).
\]
\end{conjecture}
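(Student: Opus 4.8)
The plan is to verify the three properties making $\varphi \mapsto (\pi_1(M)\curvearrowright\partial\orb_\varphi)$ a bijection: well-definedness, injectivity, and surjectivity. Well-definedness is exactly \Cref{maintheorem}. For the other two it is convenient to pass throughout to minimal quotients of universal circles: by \cite[\S5.1]{Calegari_promoting} these exist and are unique, and a semiconjugacy class in $\mathrm{UC}(\FF)$ is determined by the conjugacy class of the underlying minimal universal circle. Recall also from Fenley that the action of $\pi_1(M)$ on $\partial\orb_\varphi$ is a minimal (indeed uniform convergence group) action when $M$ is atoroidal, and that by \Cref{prop:when_min} the universal circle $\partial\orb_\varphi$ is itself minimal except when $\varphi$ is skew and non-regulating for $\FF$ --- a case one should separately check cannot arise for a non-$\R$-covered $\FF$, or can be absorbed since such a skew flow is orbit equivalent to its regulating tilt, which does give a minimal universal circle.

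Injectivity then reduces to the main result of \cite{BFM}. If $\varphi_1,\varphi_2\in\mathrm{PA}(\FF)$ have semiconjugate boundary actions, then since both actions are minimal the monotone semiconjugacy collapses no interval (the endpoints of a collapsed interval would be a proper closed invariant set) and hence is a homeomorphism; so the two actions are conjugate, and by \cite{BFM} --- which says distinct (not orbit equivalent) pseudo-Anosov flows have non-conjugate boundary actions --- the flows $\varphi_1,\varphi_2$ are orbit equivalent. A more constructive variant, which I would pursue in parallel because it also informs surjectivity, is to recover $\varphi$ from the circle action directly: first show the maps $\{\pi_\lambda\}$ are determined by the action on $\partial\orb$ together with $\FF$ (they are characterized by what they collapse), then use \Cref{th:gaps} to read off the limit set of each $\Omega_\lambda$ and its decomposition into frontier arcs, and let $\lambda$ range over $\wt\FF$ to reconstruct the two singular foliations on $\orb$ as a bifoliated plane, which determines $\varphi$ up to orbit equivalence since the bifoliated orbit space is a complete invariant (Barbot, Fenley, Barthelme--Frankel--Mann).

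Surjectivity is the crux. Given a universal circle $U$ for $\FF$, pass to its minimal quotient $U_{\min}$; it suffices to realize $U_{\min}$ as some $\partial\orb_\varphi$. From the minimal universal circle structure one extracts, following Thurston and Calegari--Dunfield, a $\pi_1$-invariant transverse pair of laminations $\Lambda^+,\Lambda^-$ of $\univ$ built from leftmost and rightmost sections. The heart of the argument is to prove that, when $\FF$ is not $\R$-covered, $\Lambda^\pm$ are \emph{very full genuine} laminations, that via the cylinder-at-infinity correspondence they integrate to genuine laminations in $M$ transverse to $\FF$, and that the complementary regions can be collapsed and flowed --- along the lines of Mosher's construction and the Gabai--Mosher machinery being completed in \cite{LandryTsangStep1} --- to produce a pseudo-Anosov flow $\varphi$ whose blown-up stable and unstable foliations are carried by $\Lambda^\pm$ and which is almost transverse to $\FF$. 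Applying \Cref{maintheorem} to this $\varphi$, the minimal quotient of $\partial\orb_\varphi$ is a minimal universal circle for $\FF$ carrying the same pair $\Lambda^\pm$ as $U_{\min}$; since a minimal universal circle is determined up to conjugacy by its pair of invariant laminations (again \cite[\S5.1]{Calegari_promoting}), it is conjugate to $U_{\min}$, so $\varphi$ maps to the class of $U$ in $\mathrm{UC}(\FF)$.

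The main obstacle is the middle step of surjectivity: showing that the invariant laminations of an arbitrary minimal universal circle of a non-$\R$-covered atoroidal taut foliation actually assemble into a pseudo-Anosov flow --- not merely an essential or genuine lamination with no flow attached. This is the converse direction of Thurston's universal-circle program and is close to well-known open problems on promoting genuine laminations to pseudo-Anosov flows; I expect the non-$\R$-covered hypothesis and the atoroidality of $M$ to be used precisely here, the former to force enough branching for $\Lambda^\pm$ to fill and the latter to exclude toroidal complementary pieces. A reasonable intermediate target is \Cref{rigidconj}, or surjectivity restricted to those minimal universal circles whose invariant laminations have no complementary product region.
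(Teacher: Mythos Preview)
This statement is a \emph{conjecture} in the paper, not a theorem: the authors offer no proof and explicitly present it as an open problem motivating future work. So there is no ``paper's own proof'' to compare against, and what you have written is not a proof but a strategy outline --- as you yourself acknowledge in your final paragraph.

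Your outline for well-definedness and injectivity is essentially sound and tracks the remarks the paper makes around the conjecture. One small correction: the action on $\partial\orb_\varphi$ is \emph{not} a uniform convergence group action in general (that would force the group to be virtually Fuchsian by Gabai and Casson--Jungreis); what you need, and what the paper cites, is merely that the action is minimal when $\varphi$ is not $\R$-covered (\Cref{prop:minimal}), and that this is automatic here since a non-$\R$-covered $\FF$ forces any almost transverse $\varphi$ to be non-$\R$-covered by \cite[Theorem~C]{fenley2005regulating}. With that in hand, injectivity follows from \cite{BFM} exactly as you say.

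The genuine gap is surjectivity, and you are right to flag it as the crux. The step ``the invariant laminations $\Lambda^\pm$ of an arbitrary minimal universal circle integrate to genuine laminations in $M$ and then to a pseudo-Anosov flow almost transverse to $\FF$'' is not a lemma you can cite; it is essentially the unwritten converse of the Thurston--Calegari--Dunfield program, and even the Gabai--Mosher construction you invoke is only partially in the literature. Your claim that ``a minimal universal circle is determined up to conjugacy by its pair of invariant laminations'' also needs justification beyond \cite{Calegari_promoting}. So what you have is a plausible roadmap, not a proof, and the paper's authors would presumably agree --- which is why they state it as a conjecture.
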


We remark that since $\FF$ is not $\R$-covered, any almost transverse flow $\varphi$ is not $\R$-covered (\cite[Theorem C]{fenley2005regulating}), and so the action $\pi_1(M) \curvearrowright \partial \orb$ is minimal in the dynamical sense (i.e. every orbit is dense) by \Cref{prop:minimal}. Moreover, by a result of Ghys, the notions of conjugacy and semiconjugacy agree for dynamically minimal circle actions (see e.g. \cite[Proposition 4.8]{bucher_semiconj}).

\section{Foliations, flows, and circles}

\subsection{Taut foliations and universal circles}\label{sec:tautfols}
Let $\FF$ be a taut foliation of a closed, oriented, irreducible 3-manifold $M$. 
By this we mean $\FF$ is a cooriented foliation of $M$ by surfaces, such that there exists a closed curve positively transverse to $\FF$ and intersecting every leaf. This simple condition on $\FF$ has some nice consequences. By Novikov's theorem \cite{Nov65}, any closed curve transverse to $\FF$ is homotopically nontrivial. Further, each leaf of $\FF$ is $\pi_1$-injective so $\FF$ lifts to a foliation $\wt \FF$ of $\wt M$ by planes.

Preimages of leaves of $\FF$ under the universal covering map foliate the universal cover $\wt M$ of $M$; we denote this foliation of $\wt M$ by $\wt \FF$. If we collapse the leaves of $\wt \FF$, we obtain the \emph{leaf space} of $\FF$, denoted $\LL$. This is a (typically non-Hausdorff) 1-manifold.

If we additionally assume that $M$ is \emph{atoroidal}, meaning that $\pi_1(M)$ contains no $\Z\oplus \Z$ subgroup, then 
a powerful theorem of Candel concerning Riemann surface laminations implies that there is a Riemannian metric $g$ on $M$ which pulls back to a hyperbolic metric on each leaf of $\FF$ \cite{Candel_uniformization}. 
We call such a metric $g$ a \emph{leafwise hyperbolic metric}. 
This endows each leaf of $\LL$ with a circle at infinity. In fact, these circles at infinity are well-defined independently of the choice of $g$: for any other metric $g'$, the ratio $g'/g$ is uniformly bounded by compactness of $M$ and so the induced metrics on any leaf of $\wt \FF$ are uniformly quasi-isometric.

The union of these ideal circles is the total space $E_\infty$ of a circle bundle over $\LL$ which we will topologize and describe further in \Cref{sec:continuity}.

\subsection{Pseudo-Anosov flows and taut foliations}\label{sec:pAflows}

A \emph{pseudo-Anosov flow} $\phi$ on our closed oriented 3-manifold $M$ is a flow that is Anosov on the complement of finitely many closed \emph{singular orbits}, each of which has a neighborhood modeled on a branched cover of a closed orbit of an Anosov flow. In particular there are two singular foliations $W^s$ and $W^u$ called the \emph{stable} and \emph{unstable} foliations, respectively. Orbits in the same stable leaf converge in forward time, and orbits in the same unstable leaf converge in backward time. For a great discussion of the precise definition of pseudo-Anosov flow, see \cite{AgolTsang}.

\begin{figure}
\centering
\includegraphics[height=2in]{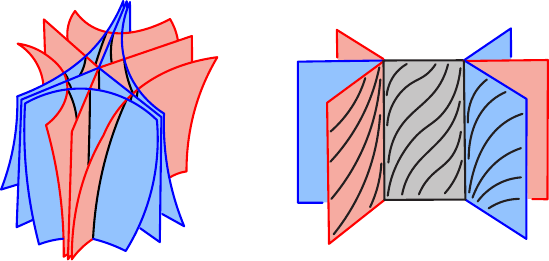}
\caption{Left: a 3-pronged singular orbit of a pseudo-Anosov flow (the flow is upward, the stable foliation is red, and the unstable is blue). Right: a dynamic blowup of a 3-pronged singular orbit. If more prongs are present, then more complex blowups are possible.}
\label{fig:blowup}
\end{figure}

It is convenient for us to expand the class of pseudo-Anosov flows by considering \emph{almost pseudo-Anosov flows}. An almost pseudo-Anosov flow is the result of replacing finitely many singular orbits with flow-invariant annulus complexes in a specific way, an operation called ``dynamic blowup." See \Cref{fig:blowup}. Each annulus in the blowup complex is called a \emph{blowup annulus}.
If $\phi'$ is an almost pseudo-Anosov flow obtained from a pseudo-Anosov flow $\phi$ in this way, we say $\phi'$ is a \emph{dynamic blowup of $\phi$}. Finally, we say a codimension one foliation $\FF$ of $M$ is \emph{almost transverse} to $\phi$ if it is transverse to some dynamic blowup of $\phi$. For more details see \cite{Mos96} or our treatment in \cite{LMTstrongtst}.

Mosher proved that an (almost) pseudo-Anosov flow on an atoroidal 3-manifold is always transitive, meaning it has a dense orbit \cite[Prop 2.7]{Mos92} (see also \cite[Corollary 1.6]{barthelme2024non}). As such, if $\FF$ is a foliation of $M$ almost transverse to a pseudo-Anosov flow, then it is taut, because we can find a closed transversal to all leaves by closing up a long flow segment.

\subsection{The flow space and its boundary}\label{sec:flowspace}

Let $\phi$ be an almost pseudo-Anosov flow on $M$.
Let $\wt \phi$ be the lift of $\phi$ to the universal cover $\wt M$ of $M$.
The stable and unstable foliations $W^s$ and $W^u$ lift to singular foliations $\wt W^s$ and $\wt W^u$ in $\wt M$. A \emph{half leaf} of a leaf $L$ of $W^{s/u}$ containing the lift of a periodic orbit is the closure of a component of $L-(\text{all lifts of periodic orbits})$.

Let $\orb$ be the quotient of $\wt M$ by the flowlines of $\wt \phi$, which we call the \emph{flow space} of $\phi$. Fenley-Mosher proved that $\orb$ is a plane \cite[Proposition 4.2]{fenley2001quasigeodesic}. The 2-dimensional foliations $\wt W^s$ and $\wt W^u$ project to 1-dimensional singular foliations of $\orb$ that we denote $\orb^s$ and $\orb^u$, respectively. We call these the stable and unstable foliations of $\orb$. The half leaves of this foliation are the projections of the half leaves of $\wt W^{s/u}$.
The action of $\pi_1(M)$ by deck transformations descends to an action of $\pi_1(M)$ on $\orb$ by orientation-preserving homeomorphisms that preserves the stable and unstable foliations.

Now let $\wt \FF$ be the lift of $\wt \FF$ to $\wt M$, and let $\lambda$ be a leaf of $\wt\FF$. The intersection of $\lambda$ with leaves of $\wt W^s$ and $\wt W^u$ define two singular foliations of $\lambda$ that we denote by $\lambda^s$ and $\lambda^u$.
A \emph{foliation ray} of $\orb^{s/u}$ or $\lambda^{s/u}$ is a proper embedding of $[0,\infty)$ into a leaf of the corresponding singular foliation.
A \emph{perfect fit rectangle} in $\orb$ is the image of a proper embedding of $([0,1]\times[0,1])-(1,1)$ into $\orb$, such that vertical and horizontal lines are mapped into leaves of the stable and unstable foliations. We say the foliation rays corresponding to $\{1\}\times [0,1)$ and $[0,1)\times\{1\}$ \emph{make a perfect fit}, and more generally we say that two foliation rays make a perfect fit if they have subrays lying in the boundary of a perfect fit rectangle as above. See \Cref{fig:perfectfits}.
Intuitively these rays meet ``at infinity," and one makes this precise by defining an ideal boundary for $\orb$.

\begin{figure}
\begin{center}
\includegraphics[height=1.25in]{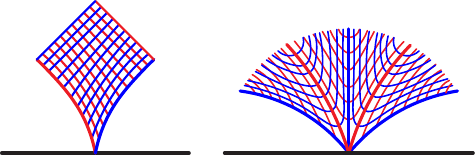}
\caption{In both pictures, the horizontal line denotes $\del\orb$ and $\orb$ lies above it. Left: two foliation rays making a perfect fit, and an associated perfect fit rectangle. Right: any two foliation rays in $\del\orb$ with the same endpoint in $\del\orb$ are part of a finite sequence of foliation rays ending at that point, each making a perfect fit with the next.}
\label{fig:perfectfits}
\end{center}
\end{figure}

In \cite{fenley2012ideal}, Fenley shows that the flow space has a natural compactification
by a circle $\del\orb$ to a closed disk $\ol \orb$. Moreover, the action of $\pi_1(M)$ on
$\orb$ extends to one on $\ol\orb$, which in turn restricts to a faithful action 
\[
\rho_\phi\colon \pi_1(M)\to \Homeo^+(\del\orb)
\]
In $\ol \orb$, every foliation ray limits on a unique point, and it is a property of the
construction that the endpoints of foliation rays are dense in $\del\orb$. Moreover, two foliation rays in
$\orb$ have the same endpoint $p\in \del\orb$ if and only if they are part of a finite sequence of rays ending at $p$, each making a perfect fit with the next (\cite[Lemma 3.20]{fenley2012ideal}), see \Cref{fig:perfectfits}.

A fundamental fact about the boundary $\del\orb$ is that if two leaves of $\orb^s$ and/or $\orb^u$ share an ideal point in $\del \orb$, then they are disjoint in $\orb$. An abstract characterization of $\partial \orb$ that includes all of the required properties can also be found in \cite[Theorem 3]{bonatti2301action}.

\subsection{Shadows of leaves}

We now mention some background on almost pseudo-Anosov flows transverse to foliations, the reference for which is \cite[\S\S 3-4]{Fen09}.
We will work in the setting where $\phi$ is an almost pseudo-Anosov flow transverse to a foliation $\FF$ of $M$.

Projection to the flow space $\orb$ carries a leaf $\lambda$ of $\wt\FF$ homeomorphically onto its image, which we denote by $\Omega_\lambda$. We sometimes call this open set the \emph{shadow} of $\lambda$. The topological frontier of $\Omega_{\lambda}$ in $\orb$ is denoted $\fr \Omega_{\lambda}$. The closure of $\Omega_\lambda$ in $\ol \orb$ is obtained by taking the union of $\Omega_\lambda$ with $\fr \Omega_{\lambda}$ and its limit set $\partial_\infty \Omega_{\lambda} \subset \partial \orb$, and we denote this closure by $\ol  \Omega_{\lambda}$.

A leaf $L$ of $\orb^s$ or $\orb^u$ is either homeomorphic to $\R$ or homeomorphic to compact simplicial tree with at least one vertex of degree $\ge3$, without its degree 1 vertices.
In the first case we say that $L$ is \emph{regular}, and in the second we say that $L$ is \emph{singular}. A point $p\in\orb$ is a \emph{singular point} if all leaves of $\orb^s$ and $\orb^u$ through $p$ are singular and is otherwise a \emph{regular point}.

Given a leaf $L$ as above, a \emph{leaf face} $\ell$ in $L$ is a properly embedded copy of $\R$ in $L$ with the property that at least one of the two components of $\orb -\ell$ contains no other points of $L$. Be advised that Fenley calls a leaf face a ``line leaf."

A \emph{leaf slice} of $L$ is a properly embedded copy of $\R$ in $L$. 
Let $\ell$ be a leaf slice of a singular stable leaf $L$, and let $L'$ be the corresponding leaf of the unstable foliation. If $\ell$ has a side such that a single unstable half leaf (this could be a \emph{blowup segment}, i.e. the projection of the lift of a blowup annulus) emanates from $\ell$ into that side, we say $\ell$ is \emph{regular} to that side. 
The symmetric definition holds for stable leaf slices, and we also declare that regular leaves are regular to both of their sides. A general leaf slice may be regular to zero, one, or two of its sides. 
Some of these definitions are illustrated in \Cref{fig:leaves}.

\begin{figure}
\begin{center}
\includegraphics[height=1.35in]{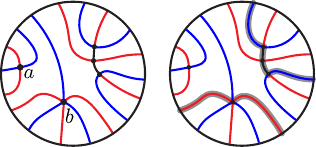}
\caption{Two pictures of $\orb$ with some example leaves of $\orb^{s/u}$. The circle is $\del\orb$. Left: $a$ is a regular point, and $b$ is a singular point. The black segments are blowup segments. Right: two leaf slices. The lower one is regular only to its top side. The upper one is regular to neither of its sides.
}
\label{fig:leaves}
\end{center}
\end{figure}

Returning to our leaf $\lambda$ of $\wt \FF$ and its shadow $\Omega_\lambda\subset\orb$, Fenley shows that $\fr\Omega_\lambda$ is a disjoint union of leaf slices, each of which is regular to the side containing $\Omega_\lambda$. See \Cref{fig:shadow}.

\begin{lemma}\label{itsadisk}
The closure $\ol  \Omega_{\lambda}$ is a disk in $\ol \orb$ that meets $\partial \orb$ in $\partial_\infty \Omega_{\lambda}$ and $\orb$ in $\Omega_{\lambda} \cup \fr \Omega_{\lambda}$. 
\end{lemma}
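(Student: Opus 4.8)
The plan is to identify the topological frontier of $\ol\Omega_\lambda$ in $\ol\orb$ as a Jordan curve and then invoke the Jordan--Schoenflies theorem; most of this is really a repackaging of Fenley's analysis of shadows. Two of the three assertions are formal: since $\orb$ is open in $\ol\orb$, the set $\ol\Omega_\lambda$ meets $\orb$ in the closure of $\Omega_\lambda$ computed inside $\orb$, which is $\Omega_\lambda\cup\fr\Omega_\lambda$ by the definition of $\fr\Omega_\lambda$; and it meets $\partial\orb$ in $\partial_\infty\Omega_\lambda$ by the definition of the limit set. So the content is that $\ol\Omega_\lambda$ is a disk. If $\Omega_\lambda=\orb$ this is immediate; otherwise write $\fr\Omega_\lambda=\bigcup_i\ell_i$ for the frontier leaf slices, and note $\partial_\infty\Omega_\lambda\neq\emptyset$ since the $\ell_i$ are noncompact and $\Omega_\lambda$ accumulates on them. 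Projection $\wt M\to\orb$ carries the plane $\lambda$ homeomorphically onto $\Omega_\lambda$, so $\Omega_\lambda$ is an open disk, and each $\ell_i$ is a properly embedded line in the plane $\orb$ with $\Omega_\lambda$ in one complementary half-plane $H_i^+$; let $H_i^-$ be the other. I would first record that $H_i^-\cap\ol\Omega_\lambda=\emptyset$: it misses $\Omega_\lambda$ by construction; for $j\neq i$ the line $\ell_j$ is connected and disjoint from $\ell_i$, hence lies entirely in $H_i^+$ or $H_i^-$, and it cannot lie in $H_i^-$ because $\Omega_\lambda$ accumulates on every point of $\ell_j$ while $\Omega_\lambda\cap H_i^-=\emptyset$; and $H_i^-\subset\orb$. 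Since moreover $\partial\orb$ has empty interior in $\ol\orb$, this shows $\intr(\ol\Omega_\lambda)=\Omega_\lambda$, so the frontier of $\ol\Omega_\lambda$ in $\ol\orb$ is exactly $F:=\bigl(\bigcup_i\ol{\ell_i}\bigr)\cup\partial_\infty\Omega_\lambda$, where $\ol{\ell_i}$ is the closure of $\ell_i$ in $\ol\orb$.

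The next, and main, step is to show $F$ is a Jordan curve. Each end of a leaf slice is a foliation ray, hence limits to a single ideal point, and one checks, using the structure of $\partial_\infty\Omega_\lambda$, that the two ideal endpoints of a given $\ell_i$ are distinct; so each $\ol{\ell_i}$ is a compact arc meeting $\partial\orb$ in exactly its two endpoints, which lie in $\partial_\infty\Omega_\lambda$. Now $\partial\orb$ is a circle and $\partial_\infty\Omega_\lambda$ a closed subset of it, with complementary open intervals $\{\alpha_j\}$; by Fenley each $\alpha_j$ is spanned by frontier leaf slices, and the slices spanning a fixed $\alpha_j$ concatenate along shared ideal points into a single embedded arc $\beta_j\subset\ol\orb$ with the same endpoints as $\ol{\alpha_j}$ and otherwise disjoint from $\partial\orb$; distinct $\alpha_j$'s give disjoint $\beta_j$'s. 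The map $\partial\orb\to F$ which is the identity on $\partial_\infty\Omega_\lambda$ and sends each $\ol{\alpha_j}$ homeomorphically onto $\beta_j$ is then a continuous bijection from a compact space to a Hausdorff space, hence a homeomorphism, so $F$ is a Jordan curve. Viewing $\ol\orb\subset S^2$, Jordan--Schoenflies gives that $F$ bounds two open disks; $\Omega_\lambda$ is connected and disjoint from $F$, so it lies in one of them, $A$. Finally $\Omega_\lambda$ is closed in $A$ (its closure in $S^2$ is $\Omega_\lambda\cup F$, which meets $A$ only in $\Omega_\lambda$), as well as open in $A$ and nonempty, so $\Omega_\lambda=A$ and hence $\ol\Omega_\lambda=\ol A$ is a closed disk. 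Together with the two formal identifications, this is the lemma.

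The crux is the continuity of the bijection $\partial\orb\to F$, i.e.\ that $\diam\beta_j\to0$ whenever $\diam\alpha_j\to0$: concretely, that the frontier leaf slices of $\Omega_\lambda$ do not accumulate onto each other or onto $\partial\orb$ in a ``Warsaw circle'' fashion, and that each $\alpha_j$ is spanned by a controlled family of slices. This tameness is exactly what Fenley's analysis of shadows in \cite[\S\S 3--4]{Fen09} supplies, and I would quote it rather than reprove it (the sharp form, that the family spanning each $\alpha_j$ is uniformly finite, is the later \Cref{th:gaps}). The remaining ingredients --- the identification $\intr(\ol\Omega_\lambda)=\Omega_\lambda$, the distinctness of the endpoints of each $\ol{\ell_i}$, and the Schoenflies conclusion --- are then routine plane topology.
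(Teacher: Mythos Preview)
Your approach is essentially the paper's: identify $\ol\Omega_\lambda\setminus\Omega_\lambda$ with $\partial\orb$ by matching each frontier slice to the arc it spans and then invoke Schoenflies; the paper builds the map in the direction $F\to\partial\orb$ and is terser about the continuity you correctly flag as the crux, while you add the extra care of checking $\intr(\ol\Omega_\lambda)=\Omega_\lambda$ and that $\Omega_\lambda$ fills the complementary disk. One simplification worth noting: since the two ideal endpoints of every frontier slice already lie in $\partial_\infty\Omega_\lambda$, each complementary interval $\alpha_j$ of $\partial_\infty\Omega_\lambda$ is spanned by a \emph{single} slice, so your concatenation step is vacuous and the parenthetical appeal to \Cref{th:gaps} (which would be circular, and in any case concerns gaps of $\pi_\lambda$ rather than complementary intervals of $\partial_\infty\Omega_\lambda$) can be dropped.
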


\begin{proof}
The only thing to show is that $\ol \Omega_\lambda$ is a disk. We first claim that $\ol  \Omega_{\lambda} - \Omega_\lambda$ is a circle. For every component of $\fr{\Omega_\lambda}$, define a homeomorphism between the component and the corresponding segment in $\del\orb$. Then glue all these maps together to get a map $\ol  \Omega_{\lambda}-\Omega_\lambda\to \del \orb$. The result is continuous by the pasting lemma and the inverses for the restrictions to the frontier components glue to give a continuous inverse. Now $\ol \Omega_\lambda$ is a disk by the Schoenflies theorem. 
\end{proof}

Moving forward, we use $\partial \ol \Omega_\lambda$ to denote the boundary of $\ol \Omega_\lambda$, which is of course a circle.  As in \Cref{itsadisk}, it is disjoint union of the limit set $\partial_\infty \Omega_\lambda\subset \del\orb$ and the frontier $\fr \Omega_\lambda\subset \orb$.

\subsection{Circular orders}
\label{sec:order}
Let $X$ be a set equipped with a map
\[
\langle \cdot,\cdot, \cdot\rangle\colon X^3\to \{+1, 0, -1\}. 
\]
One should think of the values in the range as specifying clockwise, neutral, and anticlockwise. Suppose that 
\begin{enumerate}[label=(\alph*)]
\item$\langle a,b,c\rangle\ne 0$ precisely when $a,b,c$ are all distinct, and 
\item for any $a,b,c,d\in X$, $\langle b,c,d\rangle-\langle a,c,d\rangle+\langle a,b,d\rangle- \langle a,b,c\rangle=0$ (the cocycle condition).
\end{enumerate}
Then we say that $\langle\cdot, \cdot, \cdot\rangle$ is a \emph{circular order} on $X$, or sometimes simply that $X$ is \emph{circularly ordered} if the circular order is implied.
Given that we wish to think of $X$ as similar to a circle, condition (b) is natural: it says in particular that for any nondegenerate 3-simplex $(a,b,c,d)$ in the abstract simplex spanned by $X$, two of its (oriented) faces are oriented clockwise and two are oriented anticlockwise; see \cite[Def. 2.1.2 and Fig. 1]{thurston_circles2}.   

The triple $(a,b,c)$ is called \emph{positive}, \emph{negative}, or \emph{degenerate} if $\langle a,b,c\rangle$ is $+1$, $-1$, or $0$ respectively.
The open interval $(a,b)$ is the set of points $x\in X$ such that $\langle a, x, b\rangle=+1$. Replacing a parenthesis by a bracket denotes the inclusion of the corresponding endpoint.

If $S^1_X$ is an oriented circle, then there is a canonical circular order defined on triples of distinct points $(a,b,c)$ by $\langle a,b,c\rangle=+1$ if the oriented arc from $a$ to $c$ contains $b$, or else  $\langle a,b,c\rangle=-1$. We will often use this order implicitly in what follows.

Finally, suppose $Y$ is another circularly ordered set. A map $f\colon X\to Y$ is \emph{monotone} if it sends no nondegenerate triple to one of the opposite sign. When $X$ and $Y$ are oriented circles, a continuous map between them is monotone if and only if it has degree 1 and its point preimages are contractible.
Given a monotone map of circles $m\colon S^1_X\to S^1_Y$, a \emph{gap} of $m$ is a maximal open connected interval in $S^1_X$ sent by $m$ to a single point. The \emph{core} of $m$ is the complement of its gaps.

\section{Monotone maps from bifoliations}\label{sec:monotonemaps}
We continue to work in the following setting: $M$ is an atoroidal 3-manifold, $\phi$ is an almost pseudo-Anosov flow on $M$ transverse to a taut foliation $\FF$, and we have fixed a leafwise hyperbolic metric on $M$ that restricts to a hyperbolic metric on each leaf of $\FF$.
In this subsection we fix a leaf $\lambda$ of $\wt \FF$, and denote its hyperbolic boundary by $\del \lambda$.

The two planes $\lambda$ and $\Omega_{\lambda}$ are equipped with pairs of singular foliations $\lambda^{s/u}$ and $\Omega_\lambda^{s/u}$, respectively. Moreover, the projection from $\wt M$ to $\orb$ restricts to a homeomorphism of these planes carrying $\lambda^{s/u}$ to $\Omega_\lambda^{s/u}$. 

Studying the ideal points of the leaves of these foliations will lead naturally to the existence of a monotone map $\del\ol \Omega_\lambda\to \del \lambda$ between the planes' boundaries. 
We begin with an observation about stable and unstable foliation rays in $\Omega_\lambda$.

\begin{observation}[Rays in a shadow]\label{obs:shadow}
Foliation rays in $\Omega_\lambda$ have well-defined endpoints in $\partial \ol \Omega_\lambda$ and the endpoints of all foliation rays (stable and unstable together) are dense, except along blow up segments in the frontier. 
\end{observation}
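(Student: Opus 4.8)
The plan is to work inside the flow space $\orb$ and its circle compactification $\ol\orb$, using \Cref{itsadisk}: $\ol\Omega_\lambda$ is a closed disk in $\ol\orb$ whose boundary circle is the disjoint union $\partial\ol\Omega_\lambda=\partial_\infty\Omega_\lambda\sqcup\fr\Omega_\lambda$, and (implicit in the proof of \Cref{itsadisk}) there is a canonical homeomorphism $c\colon\partial\ol\Omega_\lambda\to\partial\orb$ restricting to the inclusion on $\partial_\infty\Omega_\lambda$ and carrying each frontier leaf slice $\ell$ of $\Omega_\lambda$ homeomorphically onto the open arc of $\partial\orb$ it spans on the side away from $\Omega_\lambda$. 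Here ``foliation ray in $\Omega_\lambda$'' means a proper embedding of $[0,\infty)$ into a leaf of $\Omega_\lambda^{s/u}$, equivalently the image under the natural homeomorphism $\lambda\to\Omega_\lambda$ of a foliation ray of $\lambda^{s/u}$.

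For the first assertion: given a foliation ray $r$ in $\Omega_\lambda$, note it lies in a single leaf $L$ of $\orb^{s/u}$, which is closed in $\orb$ and homeomorphic either to $\R$ or to a finite simplicial tree with finitely many ends, each end being a foliation ray of $\orb^{s/u}$. The image of $r$ is an embedded ray in $L$; being homeomorphic to $[0,\infty)$ it is eventually contained in one end of $L$, and as the parameter tends to $\infty$ there are two cases. Either (i) $r$ converges in $\orb$ to a point $p\in L$; since $r$ is proper in $\Omega_\lambda$ we must have $p\notin\Omega_\lambda$ (otherwise $r$ returns infinitely often to a compact neighborhood of $p$), so $p\in(\ol\Omega_\lambda\cap\orb)\setminus\Omega_\lambda$, which equals $\fr\Omega_\lambda\subset\partial\ol\Omega_\lambda$ by \Cref{itsadisk}. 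Or (ii) $r$ exits along an end of $L$, so a tail of $r$ is a foliation ray of $\orb^{s/u}$ and hence, by \cite{fenley2012ideal}, converges in $\ol\orb$ to a unique point $q\in\partial\orb$; since $\ol\Omega_\lambda$ is closed in $\ol\orb$ and $r\subset\Omega_\lambda$, we get $q\in\ol\Omega_\lambda\cap\partial\orb=\partial_\infty\Omega_\lambda\subset\partial\ol\Omega_\lambda$. Either way $r$ converges to a unique point of $\partial\ol\Omega_\lambda$, its endpoint.

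For density I would produce two families of foliation rays in $\Omega_\lambda$. First, let $\ell$ be a frontier leaf slice that is not a blow up segment; say $\ell$ lies in a stable leaf, so it is regular to the $\Omega_\lambda$-side. At a regular point $z$ of $\ell$ lying on no other frontier component, the unstable leaf through $z$ crosses $\ell$ transversally at $z$ and its half-leaf on the $\Omega_\lambda$-side enters $\Omega_\lambda$; a tail of that half-leaf is then a foliation ray in $\Omega_\lambda$ converging to $z$, so $z$ is an endpoint of a foliation ray in $\Omega_\lambda$. Such $z$ are dense in $\ell$ because the excluded points — singular points of $\orb$ on $\ell$, and crossings of $\ell$ with other frontier components — are countable; and letting the relevant end of $\ell$ run out to one of its ideal endpoints $q'$, the corresponding points $z$ converge to $q'$. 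So the endpoints of foliation rays in $\Omega_\lambda$ are dense in every frontier leaf slice that is not a blow up segment, and accumulate on every point of $\partial_\infty\Omega_\lambda$ that is a limit of ideal endpoints of such leaf slices. Second, for a point $q$ in the interior of $\partial_\infty\Omega_\lambda$ inside $\partial\orb$ — one admitting an arc $A$ of $\partial\orb$ around $q$ with $A\subset\partial_\infty\Omega_\lambda$ — I claim any foliation ray $\sigma$ of $\orb^{s/u}$ whose endpoint lies in $A$ is eventually contained in $\Omega_\lambda$, so that a tail of $\sigma$ is a foliation ray in $\Omega_\lambda$ with the same endpoint. Indeed, otherwise $\sigma$ would fall infinitely often into complementary regions of $\Omega_\lambda$ as it limits to its endpoint; each such region is contained in the closed half-disk of $\ol\orb$ cut off by a frontier leaf slice $\ell$, so its closure meets $\partial\orb$ inside $\ol{c(\ell)}$, and then the endpoint of $\sigma$ would lie in the closure of the union of these intervals $c(\ell)$, contradicting that it lies in the interior of $\partial_\infty\Omega_\lambda$. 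Combining the two families with the density of endpoints of foliation rays of $\orb$ in $\partial\orb$ \cite{fenley2012ideal} and the homeomorphism $c$ then gives density of endpoints of foliation rays in $\Omega_\lambda$ throughout $\partial\ol\Omega_\lambda$, with the sole exception of interiors of the arcs corresponding to blow up segments in $\fr\Omega_\lambda$.

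I expect the second density family — density near the limit set — to be the main obstacle: the delicate point is to rule out a foliation ray of $\orb$ that crosses $\fr\Omega_\lambda$ infinitely often on its way to an interior point of $\partial_\infty\Omega_\lambda$. This should follow from Fenley's explicit description in \cite{fenley2012ideal} of neighborhood bases of points of $\partial\orb$ (cut out by foliation rays and finite perfect-fit chains), together with the structure of $\fr\Omega_\lambda$ as a disjoint union of leaf slices each regular to the $\Omega_\lambda$-side \cite{Fen09}. A secondary point, in the frontier family, is to verify that the transverse half-leaf at $z$ genuinely enters $\Omega_\lambda$ rather than immediately meeting another frontier component or running along a blow up segment; this again uses disjointness of distinct frontier components, and is exactly where the blow up segment exception is forced, since along a blow up segment the transverse foliation degenerates and no such half-leaves exist.
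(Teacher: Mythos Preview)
The paper states this as an \emph{Observation} and offers no proof whatsoever; it is treated as immediate from the cited structure of $\Omega_\lambda$ and of $\ol\orb$. Your write-up is therefore not comparable to a paper proof, but it is a correct and careful justification of the statement.

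Two small points. First, in your Family~1 argument you say the excluded points on a frontier slice $\ell$ are countable; that is false when $\ell$ contains a blowup segment, since those are genuine intervals. The fix is immediate: the excluded set is a finite union of (compact) blowup intervals together with finitely many isolated singularities, so regular points are still dense on $\ell$ away from the blowup segments, which is exactly the exception in the statement. Also, ``$\ell$ is not a blow up segment'' is a type mismatch---frontier slices are lines; you mean the portion of $\ell$ off the blowup segments.

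Second, your Family~2 argument is correct but can be closed more cleanly than you anticipate. Each complementary region of $\Omega_\lambda$ in $\orb$ is bounded by a \emph{single} frontier slice (since the frontier slices are pairwise disjoint and each separates $\orb$). For a foliation ray $\sigma$, the intersection $\sigma\cap\ell_n$ with any frontier slice is connected (transverse leaves meet in a point or a blowup interval; same-foliation leaves meet in a subtree), hence $\sigma\cap\ol{C_n}$ is a single interval of $\sigma$. So either one such interval is a terminal ray, forcing $\sigma(\infty)\in\ol{c(\ell_n)}$ and contradicting $\sigma(\infty)\in\mathrm{int}(\partial_\infty\Omega_\lambda)$; or infinitely many slices meet $\sigma$, giving points $p_n\in\ell_n\subset\partial\ol\Omega_\lambda$ with $p_n\to q$ in $\ol\orb$, hence in $\partial\ol\Omega_\lambda$, and then $c(p_n)\in\partial\orb\setminus\partial_\infty\Omega_\lambda$ converge to $c(q)=q$, again contradicting $q\in\mathrm{int}(\partial_\infty\Omega_\lambda)$. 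This is exactly your intended contradiction, with the homeomorphism $c$ doing the work of linking positions in $\orb$ to positions on $\partial\orb$.
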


Next, we record some fundamental properties of foliation rays in $\lambda$, due to Fenley.

\begin{lemma}[Rays in a leaf, Fenley]\label{lem:omniFenley}
Foliation rays in $\lambda$ exhibit the following properties:
\begin{enumerate}[label=(\roman*)]
\item \cite[Corollary 5.5]{Fen09}: Foliation rays in $\lambda$ have well-defined endpoints in $\del\lambda$. 
\item \cite[Prop 6.3]{Fen09}: The endpoints of stable foliation rays are dense in $\del\lambda$. The same is true for unstable foliation rays.
\end{enumerate}
\end{lemma}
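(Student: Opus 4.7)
The plan is to establish (i) first and then derive (ii) from it together with a dynamical input.

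For (i), I would aim to show that each foliation ray in $\lambda$ is a quasi-geodesic with respect to the hyperbolic metric, from which the existence of a unique endpoint in $\partial\lambda$ follows by standard hyperbolic geometry. Let $\gamma$ be a stable foliation ray in $\lambda$ (the unstable case being symmetric). The idea is to exploit the transverse product structure coming from $\phi$: away from singularities, $\lambda^u$ is transverse to $\gamma$, and distinct leaves of $\lambda^u$ are disjoint and properly embedded in $\lambda$. This provides a combing of $\gamma$ by unstable leaves that rigidly constrains its geometry. Using cocompactness of the $\pi_1(M)$-action on $M$, together with the leafwise hyperbolic metric supplied by Candel's theorem, I would argue by a renormalization/compactness argument that consecutive ``return times'' of $\gamma$ to a fixed transverse scale in $\lambda$ have uniformly definite hyperbolic length, yielding the desired quasi-geodesic estimate.

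An alternative route would be to combine Observation \ref{obs:shadow}, which gives unique endpoints for rays in $\partial \ol \Omega_\lambda$, with a comparison between the two compactifications $\lambda \cup \partial\lambda$ and $\ol \Omega_\lambda$ of the plane $\lambda \cong \Omega_\lambda$. The difficulty is that the homeomorphism $\lambda \to \Omega_\lambda$ induced by projection to the flow space is not expected to extend continuously to the boundaries, so the task reduces to checking that hyperbolic divergence along a foliation ray in $\lambda$ forces convergence in $\ol \Omega_\lambda$. I expect this comparison --- essentially equivalent to the quasi-geodesic estimate above --- to be the main technical obstacle in the argument.

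For (ii), density is inherited from the dynamics of $\phi$. First, the transitivity of the pseudo-Anosov flow on the atoroidal manifold $M$ noted in Section 2.2 implies that stable leaves are dense in $M$, and hence that the leaves of $\lambda^s$ are dense in $\lambda$. Second, once (i) is in hand, I would pass from density of leaves to density of endpoints as follows: any open arc $I \subset \partial\lambda$ is subtended by a nonempty open cone in $\lambda$, this cone must intersect some leaf of $\lambda^s$ by density, and the ray of that leaf heading into the cone has its endpoint in $\overline{I}$ by (i); shrinking $I$ slightly places the endpoint in $I$ itself. The argument for unstable rays is symmetric.
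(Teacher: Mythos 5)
Note first that the paper does not prove this lemma at all: both items are quoted verbatim from Fenley \cite[Cor.~5.5, Prop.~6.3]{Fen09}, so the only fair comparison is with Fenley's arguments, which are substantially more involved than your sketch and rely on the interplay of the two transverse singular foliations, product regions, and (in \S 6 of that paper) the Leaf Pocket Theorem.

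For (i), the central claim of your first route --- that foliation rays are quasi-geodesics in $\lambda$ --- is both unjustified and in tension with the rest of the paper. \Cref{lem:omniFenley2}(ii) asserts uniform quasi-geodesity of leaf slices \emph{only} when the leaf space of $\orb^s$ is Hausdorff, and the discussion of spike regions (\Cref{lem:spike}) exhibits whole families of slices with \emph{equal} ideal endpoints; the local model is the horocycle foliation, whose leaves are badly non-quasi-geodesic. Correspondingly, Fenley's quantitative statement (\Cref{lem:omniFenley2}(i)) is only the one-sided containment $r^*\subset N_\delta(r)$, not a Hausdorff-distance bound. Your proposed mechanism --- uniform ``return times'' to a transverse scale via cocompactness --- gives at best a lower bound on the rate at which $r$ crosses unstable leaves; it says nothing about backtracking in the hyperbolic metric, which is exactly what a quasi-geodesic estimate must control. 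Your alternative route (comparing the compactifications $\lambda\cup\partial\lambda$ and $\ol\Omega_\lambda$) correctly isolates the difficulty but, as you concede, does not resolve it, so (i) is not proved.

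For (ii), the step ``leaves of $\lambda^s$ are dense in $\lambda$, hence their endpoints are dense in $\partial\lambda$'' is false as a general implication, and your cone argument does not repair it: a properly embedded leaf can meet the cone over $I$ (even at points arbitrarily far from the basepoint) while both of its rays converge to ideal points outside $I$, exiting the cone through its sides. The horocycle foliation of $\HH^2$ centered at $p$ is a clean counterexample to the implication: it fills the plane, every ray has a well-defined endpoint, yet the set of all endpoints is the single point $\{p\}$. Spike regions realize precisely this picture inside $\lambda$, which is why density of endpoints is a genuine theorem of Fenley rather than a soft consequence of transitivity; indeed, the finiteness statement \Cref{finitespikes}, which is what ultimately limits this degeneration, is itself derived in the paper \emph{from} \Cref{lem:omniFenley} and \Cref{lem:omniFenley2}, so an argument in this direction would also risk circularity.
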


The sets of foliation rays of $\lambda$ and $\Omega_\lambda$ are canonically identified, and we denote the set of ends of these rays by $\mc E$. 
There is a natural circular order on $\mc E$: the ends of triples of distinct rays are assigned $\pm 1$ by considering the rays' intersections with the boundaries of large disks. The key point is that any two leaves of the bifoliation have compact intersection. For details see \cite[Chapter 4]{Frankel_thesis} or \cite[Section 2]{bonatti2301action}.

Now let $\mc E_\lambda$ and $\mc E_\Omega$ be the subsets of $\partial \lambda$ and $\partial \ol \Omega_\lambda$ consisting of ideal points of stable and unstable foliation rays. Each of these sets is embedded in an oriented circle and hence is circularly ordered.
Since $\partial \lambda$ and $\partial \ol \Omega_\lambda$ are compactifications of isomorphic bifoliated planes, and stable/unstable foliation rays have well-defined endpoints in both, we have maps
\begin{center}
\begin{tikzcd}
&\mc E\arrow{ld} \arrow{rd}&\\
\mc E_\Omega&&\mc E_\lambda
\end{tikzcd}
\end{center}
which are evidently monotone. The next lemma says that the above diagram has a natural completion to a commutative triangle of monotone maps.

\begin{lemma} \label{lem:single_valued}
If two foliation rays $r_1,r_2$ have the same endpoints in $\partial \ol  \Omega_{\lambda}$, then they have the same endpoints in $\partial\lambda$. 
Hence, the maps above induce a monotone map $\mc E_\Omega \to \mc E_\lambda$.
\end{lemma}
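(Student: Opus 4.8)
The plan is to show that the hypothesis --- two foliation rays $r_1, r_2$ sharing both endpoints in $\partial\ol\Omega_\lambda$ --- is a very rigid condition, and in fact forces $r_1$ and $r_2$ to cobound a specific kind of region in $\Omega_\lambda$, which then lets us transport the conclusion to $\partial\lambda$. First I would reduce to the case where $r_1$ and $r_2$ share a \emph{single} endpoint $p\in\partial\ol\Omega_\lambda$, since the general two-endpoint case will follow by applying the single-endpoint argument at each of the two shared ideal points and combining. If $p$ lies in the frontier $\fr\Omega_\lambda$ (i.e. $p\in\orb$), then $r_1,r_2$ are ordinary foliation rays in $\orb$ landing at a common point of $\orb$, hence eventually lie in a common leaf of $\orb^{s/u}$; pulling back under the homeomorphism $\Omega_\lambda\cong\lambda$, they eventually lie in a common leaf of $\lambda^{s/u}$, and by \Cref{lem:omniFenley}(i) rays in the same leaf of $\lambda$ that agree eventually have the same endpoint in $\partial\lambda$. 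So the substantive case is $p\in\partial_\infty\Omega_\lambda\subset\partial\orb$.

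In that case I would use the structure of $\partial\orb$ recalled in \Cref{sec:flowspace}: two foliation rays in $\orb$ with the same endpoint in $\partial\orb$ are joined by a \emph{finite} chain of foliation rays, consecutive ones making a perfect fit (\cite[Lemma 3.20]{fenley2012ideal}). Apply this to $r_1$ and $r_2$. The key geometric input is that $r_1,r_2\subset\Omega_\lambda$ together with the arc of $\partial\ol\Omega_\lambda$ between their endpoints at $p$ bound a disk $D\subset\ol\Omega_\lambda$ (\Cref{itsadisk}); since $\fr\Omega_\lambda$ is a disjoint union of leaf slices each regular to the side containing $\Omega_\lambda$, the perfect-fit chain connecting $r_1$ to $r_2$ must be ``trapped'' --- I want to argue that perfect fits between stable and unstable leaves inside $\Omega_\lambda$ are controlled by the shadow's frontier, so that the chain stays inside $\ol\Omega_\lambda$ and its rays either lie in $\Omega_\lambda$ or in frontier leaf slices. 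Then I transport the whole chain via $\Omega_\lambda\cong\lambda$ to a chain of foliation rays in $\lambda$, consecutive ones making a perfect fit \emph{in $\lambda$} (perfect fits are detected by the bifoliated structure, which is preserved by the homeomorphism). By Fenley's analysis of $\partial\lambda$ --- specifically that foliation rays in $\lambda$ making a perfect fit share an endpoint in $\partial\lambda$ (this is part of the package in \cite[\S\S5-6]{Fen09} underlying \Cref{lem:omniFenley}) --- a finite perfect-fit chain has all its rays landing at a common point of $\partial\lambda$. Hence $r_1$ and $r_2$ have the same endpoint in $\partial\lambda$. Doing this at both shared endpoints of $r_1,r_2$ gives the full statement, and the induced map $\mc E_\Omega\to\mc E_\lambda$ is then well-defined; that it is monotone follows since it is a quotient of the monotone maps $\mc E\to\mc E_\Omega$ and $\mc E\to\mc E_\lambda$, i.e. it sends no nondegenerate triple to one of opposite sign because $\mc E\to\mc E_\lambda$ doesn't and $\mc E\to\mc E_\Omega$ is surjective onto $\mc E_\Omega$.

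The main obstacle I anticipate is the step claiming that the perfect-fit chain joining $r_1$ to $r_2$ can be taken \emph{inside} $\ol\Omega_\lambda$ --- a priori the chain supplied by \cite[Lemma 3.20]{fenley2012ideal} lives in $\orb$ and could wander outside the shadow. Handling this requires understanding how perfect fits interact with the frontier leaf slices of $\Omega_\lambda$: a perfect fit between two rays, one of which is (eventually) in $\Omega_\lambda$, should force the partner ray to be separated from the bulk of $\Omega_\lambda$ only across a frontier component, so the chain's excursions outside $\Omega_\lambda$ are bounded by frontier leaf slices. One should be able to make this precise using that each frontier component is a leaf slice regular to the $\Omega_\lambda$-side, together with the fact (recalled at the end of \Cref{sec:flowspace}) that leaves of $\orb^{s/u}$ sharing an ideal point are disjoint in $\orb$ --- so two frontier leaf slices accumulating at $p$ cannot cross, which rigidifies the configuration. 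If a fully frontier-internal argument proves awkward, an alternative is to work directly at the level of ideal points: show that $r_1$ and $r_2$ landing at the same $p\in\partial_\infty\Omega_\lambda$ means, after transporting to $\lambda$, that their endpoints in $\partial\lambda$ cannot be separated by any other foliation-ray endpoint (using density from \Cref{lem:omniFenley}(ii) and monotonicity of $\mc E\to\mc E_\lambda$), hence must coincide.
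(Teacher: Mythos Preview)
Your ``alternative'' at the very end is exactly the paper's argument, and it sidesteps the obstacle you flag. The paper observes that if two rays make a perfect fit then there are no foliation-ray ends between them on the perfect-fit side; hence, by density of $\mc E_\lambda$ in $\partial\lambda$ (\Cref{lem:omniFenley}(ii)), their images in $\partial\lambda$ must coincide. Chaining this observation along the perfect-fit sequence joining $r_1$ to $r_2$ finishes. You do not need to transport the chain to $\lambda$ or invoke a separate Fenley statement about perfect fits in leaves---density plus monotonicity of $\mc E \to \mc E_\lambda$ does all the work. As for your worry that intermediate rays of the chain might leave $\ol\Omega_\lambda$: it is in fact unfounded (the disk in $\ol\orb$ between $r_1$, $r_2$, and $p$ lies in $\Omega_\lambda$, since any frontier component trapped inside it would need both ideal endpoints at $p$, which is impossible), but you never need this. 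Even if some intermediate $s_i$ were outside $\Omega_\lambda$, the density argument applies directly to consecutive pairs of chain-rays that \emph{do} lie in $\Omega_\lambda$, with no other ends of $\mc E$ between them.

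Two minor corrections. A foliation ray has a single ideal endpoint; the lemma's plural ``endpoints'' just means ``the endpoint of $r_1$ and the endpoint of $r_2$'', so your reduction to a single shared endpoint is already the whole statement. And the case $p \in \fr\Omega_\lambda$ is vacuous: since each frontier component is regular to the $\Omega_\lambda$ side, exactly one foliation ray from $\Omega_\lambda$ lands at each of its points, so distinct rays cannot share such an endpoint. Your reasoning there (``hence eventually lie in a common leaf'') is not correct as stated and is not needed.
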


\begin{proof}
If the rays are distinct, they must end at a point in $\partial_\infty
\Omega_\lambda\subset\partial\orb$. Hence they are joined by a sequence of perfect fits (see \Cref{sec:flowspace}). We now observe that two rays making a perfect fit must have the same endpoints in $\partial_\infty \lambda$: there are no leaves of either foliation in between them, so the endpoints must be the same by density of endpoints (\Cref{lem:omniFenley}(ii)). This induces a map $\mc E_\Omega \to \mc E_\lambda$ completing the above diagram to a commutative triangle. This map is monotone because the other two maps are.
\end{proof}

Let $S^1_X, S^1_Y$ be oriented circles, let $ A\subset S^1_X$, and let $ B\subset S^1_Y$ be dense.
It is a well known exercise that a monotone surjection $f\colon A\to  B$ induces a unique continuous monotone map $F\colon S^1_X\to S^1_Y$ (necessarily defined at limit points by $F(\lim(x_n))=\lim(f(x_n))$ where $(x_n)$ is any Cauchy sequence of points in $A$).

\medskip

Hence we obtain a unique continuous monotone map $\partial \ol \Omega_\lambda \to \partial \lambda$ that extends the one in \Cref{lem:single_valued}. 
Fenley's \Cref{lem:omniFenley} gives a quick proof that this map collapses each leaf in the frontier of $\Omega_\lambda$:

\begin{lemma}\label{frontiercollapse}
If two foliation rays in $\Omega_\lambda$ have endpoints in the same component of $\fr \Omega_\lambda$, then they have the same endpoint on $\partial \lambda$.
\end{lemma}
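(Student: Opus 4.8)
The plan is to reduce the statement to the two facts about leaves in the frontier of $\Omega_\lambda$ recorded earlier: each such frontier component is a leaf slice that is \emph{regular} to the side containing $\Omega_\lambda$, and in $\ol\orb$ each foliation ray has a well-defined endpoint on $\partial\ol\Omega_\lambda$ (\Cref{obs:shadow}), while in $\partial\lambda$ foliation rays have well-defined endpoints with dense endpoint sets (\Cref{lem:omniFenley}). Fix a frontier component $F$ of $\Omega_\lambda$ and two foliation rays $r_1, r_2$ in $\Omega_\lambda$ whose endpoints on $\partial\ol\Omega_\lambda$ lie on the arc of $\partial\ol\Omega_\lambda$ cut off by $F$ (equivalently, the open interval in $\del\orb$ spanned by $F$). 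I want to show $r_1$ and $r_2$ have the same endpoint on $\partial\lambda$.

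First I would observe that, since $F$ is a leaf slice lying in the frontier of the open region $\Omega_\lambda$ and regular to the $\Omega_\lambda$-side, the two rays $r_1, r_2$ are ``trapped'' by $F$: every leaf of $\Omega_\lambda^{s/u}$ that separates $r_1$ from $r_2$ inside $\Omega_\lambda$ must itself have both endpoints on the $F$-arc of $\partial\ol\Omega_\lambda$, because $F$ together with that arc bounds a disk in $\ol\Omega_\lambda$ (here I use \Cref{itsadisk}). Transporting this via the canonical identification of the bifoliated planes $\Omega_\lambda \cong \lambda$, the rays $r_1, r_2$ and all the intervening leaves now live in $\lambda$, and the crucial point is that the separating leaves of $\lambda^{s/u}$ between $r_1$ and $r_2$ cannot have their endpoints spread densely around $\partial\lambda$: they are all on the ``far side'' of the leaf slice corresponding to $F$, which itself has well-defined endpoints on $\partial\lambda$ by \Cref{lem:omniFenley}(i). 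So the endpoints of $r_1$ and $r_2$ on $\partial\lambda$ both lie in the closed sub-arc of $\partial\lambda$ cut off by the (at most two) endpoints of the $F$-slice.

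To upgrade ``same arc'' to ``same point,'' I would argue that no foliation ray endpoint of $\lambda$ lies strictly between the endpoints of $r_1$ and $r_2$ on that arc: any such ray would have to cross $F$ (it cannot be separated from both $r_1$ and $r_2$ by leaves staying on the $F$-arc, by the trapping observation), contradicting that $F$ is in the frontier of $\Omega_\lambda$, i.e.\ disjoint from $\Omega_\lambda \cong \lambda$ — more precisely, a ray of $\lambda^{s/u}$ meeting the $F$-slice transversally on its regular side is impossible because $F$ is regular there, and a parallel one would share the $F$-endpoint. Combined with density of foliation ray endpoints in $\partial\lambda$ (\Cref{lem:omniFenley}(ii)), an arc of $\partial\lambda$ containing no such endpoint in its interior must be degenerate, forcing the endpoints of $r_1$ and $r_2$ to coincide. (If $F$ and the arc it spans degenerate to a single ideal point of $\del\orb$, one is reduced to \Cref{lem:single_valued} via perfect fits.)

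The main obstacle I anticipate is the careful bookkeeping in the ``trapping'' step: making precise, using only that $F$ is a regular leaf slice and \Cref{itsadisk}, that \emph{every} leaf of $\Omega_\lambda^{s/u}$ separating $r_1$ from $r_2$ has both ideal endpoints on the $F$-arc — in particular handling singular leaves and blowup segments in the frontier, and the case where $F$ is only a proper sub-slice of a singular leaf sharing ideal points with other frontier components. Once that separation statement is in hand, the rest is a short density argument identical in spirit to the perfect-fit argument already used in \Cref{lem:single_valued}.
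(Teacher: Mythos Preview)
Your density endgame is right, but step 5 does not go through, and the fix is precisely the one-line observation that makes the paper's proof two sentences. Your claim ``no foliation ray endpoint of $\lambda$ lies strictly between the endpoints of $r_1$ and $r_2$'' is too strong: if $F$ is, say, stable, then every unstable ray terminating at an interior point of $F$ between $r_1$ and $r_2$ is a candidate for having its $\partial\lambda$-endpoint strictly in that arc, and your ``would have to cross $F$'' argument does not exclude these --- such a ray does not cross $F$, it ends there. Your appeal to regularity is backwards: regularity of $F$ to the $\Omega_\lambda$ side is exactly what guarantees one unstable half-leaf emanating from each point of $F$ into $\Omega_\lambda$, not what forbids it.

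What you should observe instead is that every ray in $\Omega_\lambda$ terminating on a stable $F$ is \emph{unstable}: a stable ray cannot accumulate on a distinct stable leaf, and regularity rules out stable half-leaves emanating from singular points of $F$ into $\Omega_\lambda$. Then by monotonicity of $\mc E_\Omega\to\mc E_\lambda$ (\Cref{lem:single_valued}), any ray with $\partial\lambda$-endpoint strictly between those of $r_1$ and $r_2$ terminates on $F$ and is therefore unstable; so that arc contains no \emph{stable} ray endpoints, and density of stable endpoints alone (\Cref{lem:omniFenley}(ii)) collapses it to a point. With this in hand the ``trapping'' discussion of separating leaves is unnecessary. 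One further issue: $F$ lies in $\fr\Omega_\lambda$, not in $\Omega_\lambda$, so it does not correspond to any leaf slice in $\lambda$ under the identification $\lambda\cong\Omega_\lambda$; your references to ``the $F$-slice'' having ``endpoints on $\partial\lambda$'' are not well-defined.
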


\begin{proof}
The endpoints of leaves terminating at a component of $\fr \Omega_{\lambda}$ give an arc in $\partial \lambda$ containing no endpoints of the other foliation. By density of endpoints (\Cref{lem:omniFenley}(ii)) it must be a trivial arc, i.e. a point. 
\end{proof}

\begin{construction}[Monotone maps $\pi_\lambda \colon \partial \orb \to \partial \lambda$]\label{monotonemaps}
Since the monotone map $\del\ol\Omega_\lambda\to \del \lambda$ collapses the closure of each frontier leaf of $\Omega_\lambda$ to a point by \Cref{frontiercollapse}, it induces a unique monotone map on $\partial \orb$ that we denote by 
\[
\pi_\lambda\colon \del\orb \to \partial \lambda.
\]
In more detail,
choose a homeomorphism $h\colon\del\ol\Omega_\lambda\to \orb$ fixing $\del_\infty \ol\Omega_\lambda$, as in the proof of \Cref{itsadisk}. 
Precomposing $\del\ol\Omega_\lambda\to \del \lambda$ with $h^{-1}$ gives the 
required monotone $\pi_\lambda\colon \del\orb \to \partial \lambda$
since it is independent of the choice of $h$ by \Cref{frontiercollapse}. 

If we replace $\phi$ by its associated pseudo-Anosov flow, $\del\orb$ is unchanged up to a canonical equivariant identification. Hence we can view the maps $\pi_\lambda$ as having domain equal to the boundary of the orbit space of this pseudo-Anosov flow.
\end{construction}

We will see in \Cref{sec:fibers} that $\pi_\lambda$ collapses only finite chains of frontier components of $\Omega_\lambda$ that meet at ideal points in $\del\orb$.

\section{Continuity and the circle bundle at infinity}\label{sec:continuity}

Recall from \Cref{sec:tautfols} that $\LL$ denotes the \emph{leaf space} of $\wt\FF$, i.e. the quotient of $\wt M$ by the
leaves of $\FF$. 
The \emph{circle bundle at infinity} associated to $\FF$ is a circle bundle 
\[
E_\infty\to \LL,
\]
the fiber over $\lambda\in \LL$ being the hyperbolic boundary $\del \lambda$ of $\lambda$ coming from a leafwise hyperbolic metric. 
Given a leaf $\lambda$ of $\wt \FF$, and $p\in \lambda$, the endpoint map $e_p\colon UT_p(\lambda)\to \del\lambda$ maps a unit tangent vector at $p$ to the ideal endpoint of the geodesic $\gamma$ in $\lambda$ with $\gamma(0)=p$ and $\gamma'(0)=v$.
Letting $\lambda$ and $p$ vary, we obtain a map
\[
e\colon UT(\wt \FF)\to E_\infty.
\]
We endow $E_\infty$ with the finest topology such that $e$ is continuous.
As Calegari points out in \cite[\S 7.2]{2007CalegariBook}, for any interval $I$ transverse to $\wt \FF$, the map $e_\infty$ restricts to a local homeomorphism $UT(\wt \FF)|_I\to E_\infty$. This gives $E_\infty$ an atlas of cylindrical charts $I\times S^1$. 

\medskip

In 
\Cref{monotonemaps} 
we built, for any $\lambda\in \LL$, a surjection $\pi_\lambda\colon\del \orb \to \del \lambda$. Letting $\lambda$ vary, we obtain a surjection
\[
\pi\colon \LL\times \del\orb\to E_\infty
\]
defined by $\pi(\lambda,p) = \pi_\lambda(p)$.

We will prove that $\pi$ is continuous. The key ingredient to this is a lemma of Fenley saying that endpoints of rays in $E_\infty$ vary continuously with $\lambda\in\LL$ \cite[Lemma 6.2]{Fen09}:
\begin{lemma}[Fenley]\label{endpointsvarycontinuously}
Let $W$ be a leaf of $\wt W^s$ (or $\wt W^u$), and let $r$ be a foliation ray in $W\cap \lambda$ containing no singular points. 
There is an open neighborhood $U$ of $\lambda$ in $\mc L$ so that for each $\mu \in U$, $W\cap \mu$ contains a foliation ray $r_\mu$ near $r$ with the property that the map from $U$ to $E_\infty$ sending $\mu$ to the ideal endpoint $r_\mu(\infty) \in \del \mu$ of $r_\mu$ is continuous.
\end{lemma}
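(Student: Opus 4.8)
The plan is to construct $r_\mu$ by sliding $r$ across nearby leaves \emph{inside the stable leaf $W$}, and then to prove continuity of $\mu\mapsto r_\mu(\infty)$ by fellow-traveling $r_\mu$ with leafwise geodesics, using the uniform geometry coming from compactness of $M$ together with the estimates behind \Cref{lem:omniFenley}. First I would fix a short embedded arc $I\subset W$ through $p=r(0)$ that is transverse, within $W$, to the one-dimensional foliation $\wt\FF|_W$ whose leaves are the components of $W\cap\mu$ for $\mu\in\LL$. Since $W$ contains flowlines of $\wt\phi$, which are transverse to $\wt\FF$, the arc $I$ is also transverse to $\wt\FF$ in $\wt M$, so its $\wt\FF$-saturation is an open neighborhood of $\lambda$ in $\LL$ and the assignment $\mu\mapsto p_\mu:=I\cap\mu$ identifies that neighborhood with $I$. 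As $r$ has no singular points, a neighborhood of $r$ in $W$ is an ordinary product strip over $r$ for $\wt\FF|_W$; exploiting this and the stable-leaf geometry of $W$ --- any two points of $W$ have forward orbits that converge, so $W$ is uniformly thin in forward time --- I expect one can pass to a smaller neighborhood $U$ of $\lambda$ so that for every $\mu\in U$ the intersection $W\cap\mu$ contains a foliation ray $r_\mu$ of $\mu^{s}$ with $r_\mu(0)=p_\mu$ and $r_\mu\to r$ uniformly on compact sets (parametrizing by leafwise arclength). This would define the assignment $\mu\mapsto r_\mu$; the real content is then continuity of the endpoint.

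The geometric engine for that is the assertion that foliation rays fellow-travel leafwise geodesics with \emph{uniform} constants: there is $C=C(M,g,\phi)$ so that every foliation ray $s$ of $\wt\FF$ lies within leafwise Hausdorff distance $C$ of the geodesic ray, in its leaf, from $s(0)$ to $s(\infty)$ --- equivalently, the geodesics $[s(0),s(t)]$ converge to $[s(0),s(\infty))$ at a rate independent of $s$. A foliation ray is assembled from flow-transverse arcs whose shape is controlled by finitely many compact model flow boxes, so it is a uniform quasigeodesic, and the Morse lemma in the uniformly hyperbolic leaves of $\wt\FF$ upgrades this to the stated uniform thinness; this uniformity underlies \cite[Corollary 5.5]{Fen09} (part (i) of \Cref{lem:omniFenley}) and can be extracted from its proof.

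For continuity, let $\gamma$ be the arclength geodesic ray of $\lambda$ from $p$ to $r(\infty)$ and $\gamma_\mu$ that of $\mu$ from $p_\mu$ to $r_\mu(\infty)$, so that $e(\gamma'(0))=r(\infty)$ and $e(\gamma_\mu'(0))=r_\mu(\infty)$ in the notation of \Cref{sec:continuity}. Given $\delta>0$, choose $T$ large. For $\mu$ near $\lambda$ the rays $r_\mu|_{[0,T]}$ and $r|_{[0,T]}$ are uniformly close, so by uniform thinness $\gamma_\mu$ and $\gamma$ fellow-travel (within $2C$) along leafwise geodesic segments whose length tends to $\infty$ with $T$; since two geodesic rays in a hyperbolic plane that issue from points at distance $<\epsilon$ and $2C$-fellow-travel for length $L$ have initial directions differing by $O(e^{-L})$, the vectors $\gamma_\mu'(0)\in UT_{p_\mu}(\mu)$ converge to $\gamma'(0)\in UT_p(\lambda)$ as $\mu\to\lambda$ --- the convergence taken in the cylindrical chart $UT(\wt\FF)|_I$, which uses only that the leafwise metric $g$ varies continuously over the compact manifold $M$. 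Since $e$ restricts to a local homeomorphism $UT(\wt\FF)|_I\to E_\infty$ (shrinking $I$ so that it is injective near $\gamma'(0)$), this yields $r_\mu(\infty)=e(\gamma_\mu'(0))\to e(\gamma'(0))=r(\infty)$ in $E_\infty$, which is the asserted continuity.

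I expect the main obstacle to be the passage to the circle at infinity, not the geodesic estimates themselves. Uniform convergence $r_\mu\to r$ on compact sets carries no information about endpoints --- nearby geodesic rays with distinct endpoints already show this --- so the argument hinges on two points that need genuine care: \emph{(i)} the \emph{uniform} thinness of foliation rays, where control that is merely leafwise (depending on the leaf) would not suffice; and \emph{(ii)} producing $r_\mu$ as a genuinely proper foliation ray that shadows $r$ along arbitrarily long pieces, on a single neighborhood $U$ of $\lambda$, rather than one that turns back or fails to be proper. Both of these are where the stable-leaf structure of $W$ and Fenley's flow-box analysis of foliation rays must be used in earnest.
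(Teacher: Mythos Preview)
Your approach has a genuine gap at its geometric core. You assert that every foliation ray is a uniform quasigeodesic in its leaf --- equivalently, that $s$ lies within bounded leafwise Hausdorff distance of its geodesic representative --- and propose to extract this from the proof behind \Cref{lem:omniFenley}(i). But this is false in general: as the paper states explicitly just after \Cref{lem:omniFenley2}, leaf slices of $\lambda^{s/u}$ need not be quasigeodesics at all. Spike regions (\Cref{lem:spike}) furnish stable leaf slices whose two ideal endpoints coincide, so the rays they contain wander arbitrarily far from their geodesic representatives. What \emph{does} hold uniformly is only the one-sided containment of \Cref{lem:omniFenley2}(i), namely $r^*\subset N_\delta(r)$; the reverse containment $r\subset N_C(r^*)$ fails, and with it your fellow-traveling step collapses. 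Knowing that $r_\mu|_{[0,T]}$ is close to $r|_{[0,T]}$ gives no control on $\gamma_\mu'(0)$ once the rays are not quasigeodesics: the direction of $\gamma_\mu$ is governed by the tail $r_\mu|_{[T,\infty)}$, which you do not control. Your argument would go through when the leaf space of $\orb^{s/u}$ is Hausdorff (exactly the hypothesis of \Cref{lem:omniFenley2}(ii)), but not otherwise.

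The paper does not supply its own proof of this lemma; it is quoted from Fenley \cite[Lemma~6.2]{Fen09}, and the paper remarks right after \Cref{continuity} that Fenley's argument uses the Leaf Pocket Theorem of Thurston and Calegari--Dunfield in an essential way. That theorem --- that a leaf of $\wt\FF$ stays close to nearby leaves along a dense set of directions --- is precisely the missing ingredient that lets one transport $r$ to $r_\mu$ \emph{and} control the ideal endpoint. Your proposed substitute, the forward-contracting geometry of the stable leaf $W$, controls how flowlines inside $W$ converge, but says nothing about the leafwise hyperbolic geometry of $\mu$ that determines $r_\mu(\infty)$, nor does it by itself guarantee that $W\cap\mu$ contains a proper ray shadowing $r$ on a \emph{fixed} neighborhood $U$. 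You correctly flag (ii) as delicate, but the tool you name is not the one that closes the gap.
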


Using the definition of $\pi_\lambda$, the following is immediate: if $x \in \partial \orb$ is the ideal endpoint of a stable/unstable ray $r$ in $\orb$ that meets $\Omega_\lambda$ for some $\lambda \in \mc L$, then $\mu \mapsto \pi_\mu(x)$ defines a continuous map from some $\LL$-neighborhood of $\lambda$ into $E_\infty$. 

This observation allows us to promote the continuity of leafwise maps to continuity of $\pi$:

\begin{proposition}\label{continuity}
The map $\pi\colon \LL\times \del \orb\to E_\infty$ is continuous.
\end{proposition}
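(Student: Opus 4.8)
The plan is to verify continuity of $\pi$ locally, working in the cylindrical charts $I \times S^1$ on $E_\infty$ described above, where $I \subset \LL$ is a transversal to $\wt\FF$. Fix $(\lambda_0, x_0) \in \LL \times \del\orb$ and set $y_0 = \pi_{\lambda_0}(x_0) \in \del\lambda_0$. We must show that for every neighborhood of $y_0$ in $E_\infty$, there is a product neighborhood $U \times V$ of $(\lambda_0, x_0)$ whose image under $\pi$ lands inside it. The first reduction is that it suffices to find, for $y_0$, finitely many foliation-ray endpoints $z_1, \dots, z_k \in \del\lambda_0$ (from $\mc E_{\lambda_0}$) that are close to $y_0$ and ``surround'' it on both sides in the circular order of $\del\lambda_0$ — using density of foliation-ray endpoints (\Cref{lem:omniFenley}(ii)), and the fact that a monotone map is controlled once we control its values on a pair of points straddling the target. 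Concretely, pick $z^-, z^+ \in \mc E_{\lambda_0}$ with $z^- < y_0 \le z^+$ in the circular order, both within the prescribed $\del\lambda_0$-neighborhood of $y_0$; we will track the $\orb$-rays underlying them.

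The key input is the observation recorded just before the statement: if $z \in \del\lambda_0$ is the endpoint of a stable/unstable foliation ray $r$ in $\lambda_0$ (equivalently a ray in $\orb$ meeting $\Omega_{\lambda_0}$), then for $\mu$ near $\lambda_0$ the ray $r$ persists — its leaf $W$ still crosses $\mu$ — and $\mu \mapsto \pi_\mu(x)$ is continuous near $\lambda_0$, where $x \in \del\orb$ is the fixed ideal endpoint of $r$ in $\orb$; this is exactly \Cref{endpointsvarycontinuously} combined with \Cref{monotonemaps}. Here one should first perturb $z^\pm$ slightly if necessary so that the underlying rays contain no singular points, which is permitted by \Cref{lem:omniFenley}(ii) and the fact that singular leaves are isolated among nearby leaves in the relevant sense. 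Applying this to the two rays $r^\pm$ underlying $z^\pm$, we get a neighborhood $U_0$ of $\lambda_0$ on which $\mu \mapsto \pi_\mu(x^\pm)$ is continuous and stays within the chosen $E_\infty$-neighborhood of $y_0$, where $x^\pm \in \del\orb$ are the $\orb$-endpoints of $r^\pm$.

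Now bring in the $\del\orb$ variable. The points $x^-, x^+ \in \del\orb$ bound an arc $A \subset \del\orb$ with $x_0 \in A$: indeed, since $\pi_{\lambda_0}$ is monotone and $z^\pm$ straddle $y_0 = \pi_{\lambda_0}(x_0)$, the preimages of $[z^-, z^+]$ under $\pi_{\lambda_0}$ form a closed arc in $\del\orb$ containing $x_0$; take $V$ to be its interior (if $x_0$ is an endpoint of that arc, i.e. $y_0 \in \mc E_{\lambda_0}$, a minor adjustment choosing $z^+ = y_0$ and arguing with a one-sided neighborhood handles it — this is the fussy edge case). Then for any $\mu \in U_0$ and any $x \in V$: by monotonicity of $\pi_\mu$ the value $\pi_\mu(x)$ lies in the closed interval of $\del\mu$ bounded by $\pi_\mu(x^-)$ and $\pi_\mu(x^+)$, provided these two still straddle things in the correct cyclic sense. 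To guarantee that, shrink $U_0$ to a smaller $U$ so that the continuously-varying points $\pi_\mu(x^-), \pi_\mu(x^+)$ remain within a small enough neighborhood of $z^\pm$ (hence of $y_0$) that the arc between them stays inside our target neighborhood; this uses the cylindrical chart structure of $E_\infty$ to make sense of ``the arc between them varies continuously.'' Then $\pi(U \times V) = \{\pi_\mu(x)\}$ is contained in this controlled arc, establishing continuity at $(\lambda_0, x_0)$.

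\textbf{Main obstacle.} The delicate point is bookkeeping the circular order under perturbation: monotonicity of $\pi_\mu$ only tells us $\pi_\mu(x)$ lies between $\pi_\mu(x^-)$ and $\pi_\mu(x^+)$ along the short arc provided we know a priori which of the two complementary arcs of $\del\mu$ is the ``short'' one, and this requires that the two straddling rays not collide or swap sides as $\mu$ moves. Controlling this is precisely what \Cref{endpointsvarycontinuously} is for — it says the endpoints move continuously, so for $\mu$ in a small enough $U$ they stay near $z^\pm$ and in particular stay on opposite sides of the (continuously varying) target point — but marshalling it cleanly, together with the singular-point caveat and the edge case $y_0 \in \mc E_{\lambda_0}$, is where the real work lies; everything else is the standard ``monotone map determined by a dense straddling set'' argument already invoked in \Cref{sec:monotonemaps}.
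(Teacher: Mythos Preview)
Your proposal is correct and follows essentially the same approach as the paper: pick two foliation-ray endpoints in $\del\lambda_0$ straddling the target, use \Cref{endpointsvarycontinuously} to control their images under $\pi_\mu$ for nearby $\mu$, and sandwich via monotonicity. The paper's version is slightly cleaner---it argues with sequences and accumulation points rather than neighborhoods, and it sidesteps your ``fussy edge case'' by simply choosing $x,y\in\mc E_\lambda$ with $\pi_\lambda(p)$ strictly in the open interval $(x,y)$ (density makes this always possible)---but the logical skeleton is identical.
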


\begin{proof}
Fix any $(\lambda,p)\in \mc L\times \del \orb$ and a sequence $(\lambda_n, p_n)$ converging to $(\lambda, p)$. We will show $\pi_{\lambda_n}(p_n)\to \pi_\lambda (p)$.

Let $\mc E_\orb\subset \del \orb$ be the dense set of endpoints of rays in $\del \orb$. Choose $x,y\in\mc E_\lambda$ so that $\pi_\lambda(p)\in (x,y)$. Let $a$ and $b$ be points in $\mc E_\orb$ that are mapped by $\pi_\lambda$ to $x$ and $y$, respectively. By \Cref{endpointsvarycontinuously}, we have $\pi_{\lambda_i}(a)\to x$ and $\pi_{\lambda_i}(b)\to y$. 

 Note that $p\in(a,b)$ since $\pi_\lambda$ is monotone, so the $p_i$
 eventually lie in $(a,b)$. Since each $\pi_{\lambda_i}$ is monotone, we have $\pi_{\lambda_i}(p_i)\in [\pi_{\lambda_i}(a), \pi_{\lambda_i}(b)]$ for all $i$. This forces any accumulation point of $(\pi_{\lambda_i}(p_i))_{i\ge0}$ to lie in $[x,y]$. Since we are free to choose $x$ and $y$ as close as we like to $\pi_{\lambda}(p)$, this proves $\pi_{\lambda_n}(p_n)\to \pi_\lambda(p)$. Hence $\pi$ is continuous at $(p,\lambda)$.
\end{proof}

We remark that the powerful \emph{Leaf Pocket Theorem} of Thurston and Calegari--Dunfield (\cite[Theorem 5.2]{CalDun_UC} plays an important role in the background of \Cref{continuity},  because Fenley's proof of \Cref{endpointsvarycontinuously} uses the result in an essential way. The Leaf Pocket Theorem says that a leaf of $\wt \FF$ stays close to other leaves of $\wt \FF$ along a dense set of directions, and is the tool that allows Fenley to transport rays to nearby leaves in a controlled fashion.

\section{Simultaneous universal circles} \label{sec:simuc}
We continue to work with a taut foliation $\FF$ of an atoroidal manifold $M$ equipped with a leafwise hyperbolic metric, and an almost pseudo-Anosov flow $\phi$ that is transverse to $\FF$.

Before defining universal circles, we remind the reader that, given a monotone map of circles $m\colon S^1_X\to S^1_Y$, a \emph{gap} of $m$ is a maximal open connected interval in $S^1_X$ sent by $m$ to a single point. The \emph{core} of $m$ is the complement of its gaps.

\begin{definition}\label{UCdef}
A \emph{universal circle} for $\FF$ is a triple $(\univ, \{m_\lambda\},\rho)$ consisting of a circle $\univ$  and a family of monotone maps $\{m_\lambda\colon  \univ\to \del \lambda \mid \lambda\in \LL\}$, together with a faithful representation
\[
\rho\colon \pi_1(M)\to \Homeo_+(\univ)
\]
such that the following hold.
\begin{enumerate}
\item The map $m\colon \LL\times \univ \to E_\infty$ defined by $m(\lambda, p)=m_\lambda(p)$ is continuous.
\item For each $\lambda\in \LL$ and $g\in \pi_1(M)$, the following diagram commutes:
\begin{center}
\begin{tikzcd}
S^1_{\text{univ}} \arrow{r}{\rho(g)}\arrow{d}{m_\lambda}&S^1_{\text{univ}}\arrow{d}{m_{g\lambda}}\\
\del\lambda\arrow{r}{g}&\del(g \lambda)
\end{tikzcd}
\end{center}
\item If $\lambda, \lambda'$ are leaves of $\wt\FF$ not connected by a transversal, then the core of $\pi_\lambda$ is contained in a single gap of $\pi_{\lambda'}$.\qedhere
\end{enumerate}
\end{definition}

Thurston and Calegari--Dunfield proved that every taut foliation of a closed, oriented, atoroidal 3-manifold has a universal circle \cite[Theorem 6.2]{CalDun_UC}. 

The original definition of a universal circle was given by Thurston in a series of lectures at MSRI. According to Calegari, that definition was less axiomatic and did not feature condition (3) \cite[p. 263]{2007CalegariBook}. The next definition was given by Calegari--Dunfield in \cite{CalDun_UC} and agrees with \Cref{UCdef} apart from not requiring condition (1). Despite the slight differences, the Thurston and Calegari--Dunfield construction satisfies \Cref{UCdef}, which is equivalent to the definition in \cite{2007CalegariBook}.

\begin{theorem}\label{simUC}
Let $M$ be a closed oriented atoroidal manifold and $\phi$ a pseudo-Anosov flow almost transverse to a foliation $\FF$ of $M$.
The circle $\del \orb$, together with the faithful representation $\rho_\phi\colon\pi_1(M)\to \Homeo^+(\del\orb)$ and the monotone maps $\pi_\lambda\colon \del\orb\to \del \lambda$ from \Cref{monotonemaps}, comprise a universal circle for $\FF$.
\end{theorem}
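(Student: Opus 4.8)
The goal is to verify the three axioms of \Cref{UCdef} for the triple $(\del\orb, \{\pi_\lambda\}, \rho_\phi)$. Axiom (1), continuity of $m\colon \LL\times\del\orb\to E_\infty$, is exactly \Cref{continuity}, so that requires nothing new. The faithfulness of $\rho_\phi$ is already recorded in \Cref{sec:flowspace} (Fenley's theorem), so the only real content is the equivariance condition (2) and the ``separation'' condition (3).

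\textbf{Equivariance.} For (2), I would argue that the whole construction of $\pi_\lambda$ in \Cref{monotonemaps} is natural with respect to the $\pi_1(M)$-action. Concretely, for $g\in\pi_1(M)$ and a leaf $\lambda$ of $\wt\FF$, the deck transformation $g$ carries $\lambda$ to $g\lambda$, and since projection $\wt M\to\orb$ is $\pi_1$-equivariant, it carries the shadow $\Omega_\lambda$ homeomorphically to $\Omega_{g\lambda}$, respecting the bifoliations and hence the frontier leaves and their ideal endpoints. So $g$ induces a commuting square relating $\del\ol\Omega_\lambda\to\del\lambda$ with $\del\ol\Omega_{g\lambda}\to\del(g\lambda)$, where the top map is the restriction of $\rho_\phi(g)$ to (the relevant identification of) $\del\orb$ and the bottom map is the isometry $g\colon\del\lambda\to\del(g\lambda)$ induced by the deck transformation (it is an isometry because $g$ preserves the leafwise hyperbolic metric, or at least acts as a quasi-isometry, which is all that is needed for boundary maps). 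Passing to the quotient by frontier leaves, which is how $\pi_\lambda$ is defined, gives the desired commuting diagram. Since the monotone extension from the dense set $\mc E_\Omega$ of ray endpoints is unique (the ``well known exercise'' quoted before \Cref{monotonemaps}), it suffices to check the diagram on ray endpoints, where it is immediate from equivariance of the bifoliation-ray correspondence.

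\textbf{Separation.} Condition (3) is the main obstacle. Suppose $\lambda,\lambda'$ are leaves of $\wt\FF$ not joined by a transversal; I must show the core of $\pi_\lambda$ lies inside a single gap of $\pi_{\lambda'}$. The geometric input is that if no transversal joins $\lambda$ and $\lambda'$ then their shadows are ``nested'' in a strong sense: $\Omega_{\lambda'}$ (together with its frontier) is disjoint from $\Omega_\lambda$, and in fact $\ol\Omega_\lambda$ is contained in the closure of a single complementary region of $\ol\Omega_{\lambda'}$ in $\ol\orb$ — equivalently, $\partial_\infty\Omega_\lambda$ together with $\fr\Omega_\lambda$ is contained in the closed interval of $\del\orb$ ``cut off'' by a single frontier leaf slice $\ell$ of $\Omega_{\lambda'}$. (This should follow from the product/order structure of the flow space: two leaves of $\wt\FF$ unlinked by a transversal have comparable shadows, a statement essentially in \cite{Fen09}; I would isolate this as a preliminary lemma.) Granting this, every ray endpoint in $\mc E_\Omega$ contributing to the core of $\pi_\lambda$ lands in the interval spanned by the single frontier leaf $\ell$ of $\Omega_{\lambda'}$; but that entire interval is, by \Cref{frontiercollapse} and \Cref{monotonemaps}, collapsed by $\pi_{\lambda'}$ to a single point, hence contained in one gap of $\pi_{\lambda'}$. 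Taking closures, the core of $\pi_\lambda$ — which is the closure of the relevant ray-endpoint set — is contained in that gap. The delicate points will be: (a) pinning down the precise unlinking-implies-nesting statement for shadows and citing or proving it cleanly; (b) handling the degenerate cases where frontier components share ideal points or involve blowup segments, so that ``a single gap'' is correctly identified; and (c) making sure the core is genuinely contained in, not merely limiting onto, the gap — this is where one uses that a gap is an \emph{open} interval and that the core of $\pi_\lambda$ avoids the two endpoints of $\ell$'s interval because those endpoints are limits of frontier behavior of $\lambda'$, not of $\lambda$. Once these axioms are checked, \Cref{simUC} follows, and \Cref{maintheorem} is an immediate restatement.
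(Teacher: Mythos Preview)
Your approach matches the paper's almost exactly. For (1) you both cite \Cref{continuity}; for (2) you both check commutativity on the dense set $\mc E_\orb$ of ray endpoints and extend by continuity/uniqueness of the monotone extension; for (3) you both reduce to the observation that no transversal implies $\Omega_\lambda \cap \Omega_{\lambda'} = \varnothing$, so that the core of $\pi_\lambda$ (which is contained in $\partial_\infty\Omega_\lambda$) lands inside the span of a single frontier component of $\Omega_{\lambda'}$, hence inside a single gap of $\pi_{\lambda'}$.

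The only real difference is tone: the paper's proof of (3) is a single sentence (``no transversal $\Rightarrow$ disjoint shadows $\Rightarrow$ core of each lies in a gap of the other''), whereas you isolate a preliminary ``unlinking $\Rightarrow$ nesting'' lemma and flag the delicate points (a)--(c). The nesting statement you want is immediate from connectedness of $\Omega_\lambda$ once the shadows are disjoint, so (a) needs no separate lemma. Your concern (c) about open-vs-closed is not addressed by the paper either; it simply regards the inclusion of a closed core in an open gap as clear once disjointness is established. So you are not proposing a different argument, only a more cautious write-up of the same one.
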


\begin{proof}
In this setting condition (1) of \Cref{UCdef} is a restatement of \Cref{continuity}.
For condition (2), given $g\in \pi_1(M)$ and $\lambda\in \LL$, the diagram commutes for points in $\mc E_\orb$ and continuity does the rest.
Finally, for (3) note that if there is no transversal from $\lambda$ to $\lambda'$, then the shadows $\Omega_\lambda$ and $\Omega_{\lambda'}$ are disjoint, so the core of each is contained in a gap of the other.
\end{proof}

\section{Fibers of the monotone maps}\label{sec:fibers}
We are still fixing a closed oriented atoroidal 3-manifold $M$, a pseudo-Anosov flow $\phi$, and a foliation $\FF$ almost transverse to $\phi$.

Our goal in this section is to characterize the gaps of the monotone maps $\pi_\lambda$ from \Cref{simUC} as follows: each gap is ``spanned by a finite chain of frontier components of $\Omega_\lambda$." We will make this precise.

\subsection{The flow space trichotomy}

A pseudo-Anosov flow is called \emph{$\R$-covered} if the leaf space of $\orb^s$ (or $\orb^u$) is homeomorphic to $\R$. In this case the flow is actually \emph{Anosov} and not just pseudo-Anosov. By results of Fenley \cite{fenley1994anosov} and Barbot \cite{barbot1995caracterisation} the structure of $(\orb, \orb^s, \orb^u)$ is either:
\begin{enumerate}
\item \emph{trivial}: there is a homeomorphism $\orb\to \R^2$ carrying $\orb^s$ and $\orb^u$ to the foliations of $\R^2$ by vertical and horizontal lines, or
\item \emph{skew}: there is a homeomorphism of $\orb$ with the diagonal strip $\{(x,y)\mid |x-y|< 1\}$ carrying $\orb^s$ and $\orb^u$ to the foliations by vertical and horizontal lines.
\end{enumerate}
In the trivial case, the flow is orbit-equivalent to the suspension of an Anosov diffeomorphism of the torus \cite{barbot1995caracterisation}. In the skew case we say the flow is \emph{skew Anosov}. Otherwise, the flow is 
\begin{enumerate}[resume]
\item \emph{non $\R$-covered}: one of $\orb^{s/u}$ has non-Hausdorff leaf space, or the flow has at least one singular orbit (perhaps both). 
\end{enumerate}
See \cite[Theorem 2.16]{BFM} for a proof of this trichotomy working in the more general setting of bifoliated planes satisfying certain axioms.

In the setting of this article, with our fixed objects $M,\FF,\phi$, the fact that $M$ is atoroidal implies that $\phi$ is either skew Anosov or non $\R$-covered. To this point it has not been necessary to distinguish between these two cases, but it will be so in this section.

\subsection{Lozenges and periodic points}\label{sec:lozenges}

We say $p\in \orb$ is \emph{periodic} if there is an element $g$ that fixes $p
\in\orb$. In this case, $p$ corresponds to a periodic orbit $\gamma$ of $\phi$. 

A \emph{lozenge} in $\orb$ is the image of a proper embedding of $(I\times I-\{(0,0), (1,1)\})\to \orb$ such that vertical and horizontal lines in $I\times I$ are mapped into stable and unstable leaves.
A point in a lozenge corresponding to $(1,0)$ or $(0,1)$ is called a \emph{corner point} of that lozenge. If one corner point of a lozenge is periodic, then so is the other, and they are fixed by a common group element. 
Moreover, the two corresponding orbits in $M$ are homotopic to each other's inverses (up to positive powers). 
If a periodic point is not the corner of any lozenge, we call it a \emph{noncorner periodic point}.

More generally, we will use the term lozenge to refer to any region of $\orb$ that blows down to a lozenge as above by collapsing blowup segments, and refer to the corners of a lozenge as the preimages of the corner points under the collapsing. In general, the corner of a lozenge will consist of either a single point or a union of blowup segments.

If $g$ fixes two points $p$ and $p'$ of $\orb$, then Fenley showed that $p$ and $p'$ are connected by a ``chain of lozenges" \cite[Theorem 3.3]{fenley1995quasigeodesic} (see also \cite[Proposition 2.24]{BFM}). That is, there is a finite collection of lozenges $L_1,\dots, L_n$ such that $p$ is a corner point of $L_1$, $p'$ is a corner point of $L_n$, and $L_i$ shares a corner point with $L_{i+1}$ for $1\le i\le n-1$.  In particular, if there are no blowup segments in $\orb$ then noncorner periodic points are exactly the periodic points that are their stabilizers' only fixed points.

If $g$ fixes the periodic point $p$, then up to possibly replacing $g$ by a higher power, we can assume that the action of $g$ fixes the stable and
unstable leaves through $p$ as well as their endpoints in $\del \orb$. There may be
other points of $\orb$ fixed by $g$, which by above are all corner points of a family of lozenges whose union is connected. 
The action of $g$ fixes all the lozenges and hence all the endpoints of leaves through the corner points.
If $\phi$ is non $\R$-covered, then the fixed point set of the action of $g$ on $\del \orb$ is exactly the closure of these endpoints (see e.g. \cite[Proposition 3.7]{BFM}). If $g$ fixes only finitely many points in $\orb$, then the fixed points of its action on $\del \orb$ are exactly the ideal endpoints of the stable and unstable leaves through the fixed points in $\orb$. Thus there are $2n\ge4$ fixed points, and one sees that they alternate between attracting and repelling. In this situation we say that $g$ acts with \emph{multi sink-source dynamics} on $\del \orb$.

Finally, if $g\in\pi_1(M)$ fixes a leaf $\ell$ and an endpoint $p$ of $\ell$, then 
every leaf that makes a perfect fit with $\ell$ at $p$ is fixed by a power of $g$ (\cite[Observation 2.10]{BFM}).
Hence, if two foliation rays $r$ and $r'$ in periodic leaves make a perfect fit, then the leaves containing $r$ and $r'$ are fixed by a common group element and hence connected by a chain of lozenges. Hence $r$ lies in a leaf containing a corner point.

\subsection{Branching leaves}\label{sec:branching}

We say that a leaf of $\orb^s$ or $\orb^u$ is a \emph{branching leaf} if it is nonseparated from another leaf in the leaf space of $\orb^s$ or $\orb^u$, respectively, and otherwise we say the leaf is \emph{nonbranching}. A fundamental result of Fenley says that branching leaves are periodic, and that there are finitely many branching leaves up to the action of $\pi_1(M)$ (\cite[Theorem 4.9]{fenley1999foliations}). 
Combined with the fact that $M$ is atoroidal, this will imply that any $g\in \pi_1(M)$ can preserve at most finitely many branching leaves. We will use this observation of Fenley repeatedly, so we record it as a lemma along with another useful fact whose proof is the same.

\begin{lemma}[Fenley]\label{zplusz}
Let $M$ be atoroidal. The action of $g\in \pi_1(M)$ on $\orb$ fixes at most finitely many branching leaves, and finitely many leaves containing blowup segments. Moreover, the number of such leaves fixed by $g$ is uniformly bounded, independently of $g$. 
\end{lemma}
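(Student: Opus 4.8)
\textbf{Proof proposal for \Cref{zplusz}.}

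The plan is to leverage Fenley's structural result that there are only finitely many branching leaves modulo $\pi_1(M)$ (\cite[Theorem 4.9]{fenley1999foliations}), together with the atoroidality of $M$, in the standard way. First I would treat the branching leaves. Let $\{B_1, \dots, B_k\}$ be a complete set of representatives of the $\pi_1(M)$-orbits of branching leaves, which is finite by Fenley. Any branching leaf fixed by $g$ lies in some orbit $\pi_1(M)\cdot B_i$, so is of the form $hB_i$ for some $h$; the condition $g(hB_i) = hB_i$ is equivalent to $h^{-1}gh \in \Stab(B_i)$. Thus the number of branching leaves fixed by $g$ is at most $\sum_{i=1}^k \#\{ h\Stab(B_i) \in \pi_1(M)/\Stab(B_i) : h^{-1}gh \in \Stab(B_i)\}$. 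So it suffices to show that each $\Stab(B_i)$ is infinite cyclic (or at least virtually cyclic), and that a given $g$ is conjugate into $\Stab(B_i)$ along only boundedly many cosets.

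For the first point: a branching leaf $B_i$ is periodic by Fenley, so $\Stab(B_i)$ is nontrivial; since it stabilizes a properly embedded line (or tree) in the plane $\orb$ and acts on it preserving orientation and the flow direction, $\Stab(B_i)$ is a torsion-free (indeed $\pi_1(M)$ is torsion-free, $M$ being aspherical) subgroup acting on $B_i$; a standard argument shows $\Stab(B_i)$ is infinite cyclic — if it contained a rank-two free abelian subgroup we would violate atoroidality, and a nonabelian subgroup cannot act faithfully and orientation-preservingly fixing the two endpoints of the axis in $\del\orb$ with sink-source dynamics. (This is where atoroidality enters.) For the second point: if $h_1^{-1}gh_1$ and $h_2^{-1}gh_2$ both lie in $\Stab(B_i) \cong \Z$, then $h_2 h_1^{-1}$ conjugates one generator-power of the cyclic group to another inside $\pi_1(M)$; since $g$ has infinite order (it fixes a point of $\del\orb$ with nontrivial dynamics, hence is not torsion and not central-ish), its centralizer $C(g)$ in $\pi_1(M)$ is itself virtually cyclic by atoroidality, and the set of cosets $h\Stab(B_i)$ with $h^{-1}gh \in \Stab(B_i)$ is a single coset of $C(g) \cap (\text{conjugates})$, hence has cardinality bounded by $[C(g) : \text{infinite cyclic}]$, which is uniformly bounded since $\pi_1(M)$ is the fundamental group of a closed aspherical atoroidal $3$-manifold. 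Assembling these bounds over $i = 1, \dots, k$ gives a bound depending only on $M$, not on $g$.

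For the leaves containing blowup segments, the argument is identical: blowup segments are the projections of lifts of the finitely many blowup annuli, so there are finitely many $\pi_1(M)$-orbits of leaves containing blowup segments, and such leaves are periodic (their stabilizers are nontrivial, fixing the corresponding periodic orbit). The same conjugacy/centralizer counting applies verbatim. I would phrase the two cases together once the finiteness-of-orbits input is recorded.

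The main obstacle I anticipate is making the ``uniformly bounded, independently of $g$'' clause genuinely uniform rather than just finite-for-each-$g$. The finiteness for a fixed $g$ is immediate from finiteness of orbits plus the fact that $g$ preserves at most one leaf in each orbit (up to the cyclic stabilizer). Uniformity requires controlling $[C(g):\langle g_0\rangle]$ where $g_0$ is a primitive root of $g$ — equivalently, bounding the ``width'' of centralizers in $\pi_1(M)$. This follows because in a closed aspherical atoroidal $3$-manifold the centralizer of any infinite-order element is infinite cyclic (Preissmann-type / Scott-core arguments, or directly from the Seifert-fibered-JSJ picture being trivial), so in fact $C(g) = \langle g_0 \rangle$ and the relevant count is at most $k$ times a constant coming from how many generator-powers of $\Z \cong \Stab(B_i)$ can be conjugate to powers of $g_0$ — which is at most $1$ after accounting for the period. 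I would isolate the clean statement ``centralizers of infinite-order elements in $\pi_1(M)$ are infinite cyclic'' as the single group-theoretic input and cite it, then the bound becomes simply the number of $\pi_1(M)$-orbits of branching leaves plus blowup-segment leaves.
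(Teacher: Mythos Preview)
Your approach is correct and shares the paper's overall strategy---reduce to finitely many $\pi_1$-orbits of branching (resp.\ blowup-containing) leaves via Fenley, then use atoroidality to bound how many leaves in a single orbit $g$ can fix---but the paper executes the second step more directly. Rather than counting cosets and invoking cyclic centralizers, the paper simply observes that if $g$ fixes two distinct leaves $\ell_1$ and $\ell_2 = h^{-1}\ell_1$ in the same orbit, then writing $\Stab(\ell_1) = \langle\gamma\rangle$ one has $g = \gamma^k$ and $hgh^{-1} = \gamma^\ell$, hence $h\gamma^k h^{-1} = \gamma^\ell$ with $h \notin \langle\gamma\rangle$ and $\gamma$ of infinite order; Shalen's theorem on Baumslag--Solitar relations in $3$-manifold groups rules this out in the atoroidal case. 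Thus $g$ fixes at most one leaf per orbit, and the uniform bound is simply the number of orbits---no separate centralizer-index analysis is needed. Your coset-counting via cyclic centralizers reaches the same conclusion but with more bookkeeping, and your write-up of that part is imprecise (e.g.\ ``a single coset of $C(g)\cap(\text{conjugates})$'' does not parse as stated); the worry you flag about uniformity dissolves once you see that ``at most one per orbit'' is the actual statement being proved.
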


\begin{proof}
Suppose that $g\in \pi_1(M)$ fixes infinitely many branching leaves. By Fenley's finiteness result above, there are two of them, $\ell_1$ and $\ell_2$, and some $h\in \pi_1(M)$ such that $\ell_1=h\cdot\ell_2$. Hence $hgh^{-1}\cdot\ell_1=\ell_1$, so $hgh^{-1}\in\Stab(\ell_1)$. Fixing a generator $\gamma$ for $\Stab(\ell_1)$, there exist integers $k$ and $\ell$ so that $g=\gamma^k$ and $hgh^{-1}=\gamma^\ell$, so 
\[
h\gamma^k h^{-1}=\gamma^\ell.
\] 
However, Theorem 1 of \cite{shalen2001three} implies that such a relation (with $\gamma$ infinite order) does not exist in an atoroidal $3$-manifold group. 
Uniform boundedness follows from the result of Fenley mentioned before the Lemma statement.

The proof for leaves containing blown segments is identical, since there are only finitely many leaves in $M$ containing blowup annuli. 
\end{proof}

In particular, the above implies that only finitely many rays may terminate at a given point in $\del\orb$.

Recall from \Cref{sec:lozenges} that if $g$ fixes a point in $\orb$ as well as the half leaves based at $p$, then every point in $\orb$ fixed by $g$ is a corner point of some lozenge, any two such points are connected by a chain of lozenges, the collection of all such lozenges is fixed by $g$, and the set of points in $\del\orb$ fixed by $g$ is exactly the closure of the set of endpoints of all half leaves through these corner points. \Cref{zplusz} allows us to prove that the limit points of this set are never endpoints of the half leaves:

\begin{lemma}\label{sink}
Let $M$ be atoroidal. If $g\in\pi_1(M)$ fixes a leaf $\ell$ and an endpoint $y$ of $\ell$, then $y\in \partial \orb$ is a sink or source for the action $\pi_1(M) \curvearrowright \partial \orb$.\end{lemma}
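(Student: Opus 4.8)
The plan is to set up an eventual-attraction/eventual-repulsion argument on a small arc of $\partial\orb$ near $y$, and then use \Cref{zplusz} to rule out the degenerate intermediate possibilities. First I would replace $g$ by a suitable power so that, as discussed in \Cref{sec:lozenges}, $g$ fixes a periodic point $p$ of $\orb$, fixes the stable and unstable leaves through $p$ together with their ideal endpoints, and fixes every lozenge in the connected chain of lozenges whose corners are the fixed points of $g$ in $\orb$; then $y$ is the ideal endpoint of a half leaf through one of these corner points. (If $\ell$ is not literally one of these leaves through a corner point, I would first use the perfect-fit observation from the end of \Cref{sec:lozenges}: a leaf fixed by $g$ with a fixed ideal endpoint is joined by perfect fits to a leaf through a corner point, and these share the ideal point $y$, so we may assume $\ell$ passes through a corner point $q$ fixed by $g$.)

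Next I would split into the two cases of the flow space trichotomy. If $\phi$ is non-$\R$-covered, then by the fact cited from \cite[Proposition 3.7]{BFM} the fixed set of $g$ on $\partial\orb$ is exactly the closure $F$ of the set of ideal endpoints of half leaves through the corner points of the chain of lozenges. By \Cref{zplusz} the chain of lozenges is finite (its corners all lie in $\Fix(g)\subset\orb$, which is a finite union of corner points, since infinitely many would force infinitely many fixed branching or blowup-containing leaves, or an abelian relation contradicting atoroidality), so the set of endpoints of half leaves through corners is a \emph{finite} set; hence $F$ equals that finite set and has no limit points other than its own elements. Thus $y$ is an isolated fixed point of $g$ on $\partial\orb$, and $g$ acts with multi sink-source dynamics: the fixed points alternate attracting/repelling around the circle. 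So $y$ is a sink or a source for $\langle g\rangle$, hence a fortiori for $\pi_1(M)\curvearrowright\partial\orb$ in the sense that it is a global attractor or repeller for some element. If $\phi$ is skew Anosov, then $\orb$ is the diagonal strip and the structure of periodic orbits and lozenges is completely explicit; I would invoke the standard description (as in \cite{fenley2005regulating, BFM}) that a periodic $g$ fixes exactly four points on $\partial\orb$, two attracting and two repelling, and the endpoints of the fixed leaves through $p$ are precisely these, so again $y$ is a sink or source.

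The one subtlety I expect to be the main obstacle is confirming that the half leaf with ideal endpoint $y$ is genuinely one of the finitely many ``corner'' half leaves, i.e. that $y$ does not arise as an ideal endpoint of a fixed leaf that is not in the lozenge chain — but this is exactly handled by the perfect-fit observation from \Cref{sec:lozenges} combined with \Cref{zplusz}: any $g$-fixed leaf with a $g$-fixed ideal endpoint is joined to a corner-containing leaf by a finite chain of perfect fits, all sharing that ideal endpoint, and the finiteness is guaranteed because $g$ fixes only finitely many branching leaves. A secondary point to check carefully is that replacing $g$ by a power does not change whether $y$ is a sink/source for the full group action $\pi_1(M)\curvearrowright\partial\orb$ — it does not, since $y$ being a sink for $\langle g^k\rangle$ means some element of $\pi_1(M)$ attracts a neighborhood of $y$ to $y$, which is exactly what ``sink for the action'' should mean here (and $g$ itself then necessarily also has $y$ as an isolated fixed point with the same attracting/repelling character, since $g$ and $g^k$ commute and share the same fixed set on $\partial\orb$).
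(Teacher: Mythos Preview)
Your argument has a genuine gap: the claim that the chain of lozenges fixed by $g$ is \emph{finite} is not justified by \Cref{zplusz} and is in fact false in general. \Cref{zplusz} bounds only the number of $g$-fixed \emph{branching} leaves and leaves containing blowup segments; it says nothing about the number of corner points or lozenges. When two adjacent lozenges in a chain share only a corner (not a side), no branching leaf is produced. Thus an infinite ``staircase'' of lozenges meeting only at corners is perfectly compatible with \Cref{zplusz}. This already happens in the skew Anosov case---the staircase through any periodic point is bi-infinite and entirely fixed by the stabilizer, so $g$ has infinitely many fixed points on $\partial\orb$, contradicting your assertion that there are exactly four. Such infinite corner-only chains can also occur in the non-$\R$-covered case, so the problem is not confined to the skew branch of your argument.

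The paper's proof does not try to show $\Fix(g)\cap\partial\orb$ is finite. Instead it shows that, even when $g$ fixes infinitely many lozenges, \Cref{zplusz} forces the chain to eventually become corner-only (only finitely many adjacent pairs can share a side, since each such shared side is a branching leaf). The resulting picture is finitely many lozenges glued along sides, together with finitely many infinite corner-only staircases emanating from them. In such a staircase the ideal endpoints of the half leaves through the corners are isolated from each other; they accumulate only at a limit point of the staircase in $\partial\orb$, and that limit point is \emph{not} the endpoint of any leaf fixed by $g$. Hence any point $y$ which \emph{is} the endpoint of a fixed leaf is isolated in $\Fix(g)\cap\partial\orb$, and therefore a sink or source. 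That isolation-of-endpoints argument, rather than finiteness of the fixed set, is the missing idea.
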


\begin{proof}
We already observed in \Cref{sec:lozenges} that isolated endpoints of leaves fixed by $g$ are sources or sinks, so it suffices to show that, in the case where $g$ fixes infinitely many lozenges, any accumulation point of endpoints of leaves fixed by $g$ is not the endpoint of such a leaf. 

Let $L_1,L_2,L_3,\dots$ be a chain of lozenges, so that adjacent terms intersect along a side or corner. The key observation is that there is some $n\ge0$ such that $L_i$ and $L_{i+1}$ intersect only in their corners for all $i\ge n$; otherwise, we would produce infinitely many branching leaves fixed by $g$, contradicting \Cref{zplusz}. See \Cref{fig:lozengechain}. 

\begin{figure}
\begin{center}
\includegraphics[]{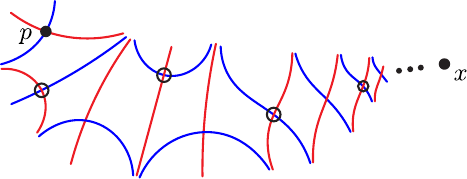}
\caption{From the proofs of \Cref{sink} and \Cref{prop:return}. For the latter: none of the circled points can lie in $\Omega_\lambda$.}
\label{fig:lozengechain}
\end{center}
\end{figure}

This implies that the set of all lozenges fixed by $g$ consists of finitely many lozenges that intersect along sides, and finitely many sequences of lozenges intersecting only along corners. it follows that the endpoints of leaves fixed by $g$ are isolated fixed points.
\end{proof}

If $r$ and $r'$ are both stable, or both unstable, rays that end at the same point in $\del\orb$ and are contained in leaves that are nonseparated from each other, 
 then there is a third ray $r''$ contained in a leaf of the other foliation that makes a perfect fit with both (\Cref{sec:flowspace}). Further, there are two lozenges containing $r, r', r''$ in their frontiers and meeting along the leaf containing $r''$. In particular, this implies that if $p\in \orb$ is a noncorner periodic point, then the stable and unstable leaves through $p$ are not branching.

If $(\ell_n)$ is a sequence of leaves of $\orb^s$ or $\orb^u$ limiting to a \emph{single} leaf slice $\ell$, then the endpoints of the $\ell_n$ limit to the endpoints of $\ell$ (see \cite[Proposition 3.1]{BFM}). This implies that if $\ell$ is a leaf face in a nonbranching leaf, then the leaves which lie sufficiently close to $\ell$ on the side to which it is regular have endpoints very close to the endpoints of $\ell$.

\subsection{Minimality of the boundary action}

The following proposition was proved by Fenley in the case of pseudo-Anosov flows with no perfect fits and by Bonatti for Anosov flows (\cite[Main Theorem]{fenley2012ideal}, \cite[Theorem 5]{bonatti2301action}). After circulating an earlier version of this article, we learned that it was also  proved recently by Barthelmé--Bonatti--Mann in an even more general form (\cite[Theorem 7.3]{barthelme2024non}).

\begin{proposition} [Barthelmé--Bonatti--Mann]\label{prop:minimal}
Let $\phi$ be a pseudo-Anosov flow on a 3-manifold $M$ which is not necessarily atoroidal. If $\varphi$ is not $\RR$--covered, then the action $\pi_1(M) \curvearrowright \partial \orb$ is minimal.
\end{proposition}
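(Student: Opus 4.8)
The plan is to prove that every nonempty closed $\pi_1(M)$-invariant subset of $\partial\orb$ equals $\partial\orb$. I would organize this around the standard trichotomy for a countable group acting on the circle by homeomorphisms: either there is a finite orbit, or the action is minimal, or there is a unique minimal set which is a Cantor set (an exceptional minimal set). Since a minimal set is always closed with empty interior unless it is the whole circle, it suffices to produce a \emph{dense} subset $P\subseteq\partial\orb$ that is contained in every minimal set of the action; such a $P$ is incompatible both with a finite orbit and with an exceptional minimal set, leaving only minimality. I would take $P$ to be the set of ideal endpoints of the stable and unstable leaves of $\orb$ through periodic points of $\phi$.

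The first ingredient is that $P$ is dense in $\partial\orb$. Periodic orbits of $\phi$ are dense in its non-wandering set by the pseudo-Anosov closing lemma, hence in all of $M$ when $\phi$ is transitive --- which holds on atoroidal $M$ by Mosher \cite{Mos92}. On the other hand the endpoints of \emph{all} stable and unstable foliation rays are dense in $\partial\orb$ (\S\ref{sec:flowspace}), and ideal endpoints depend continuously on a leaf slice under convergence (the convergence statement in \S\ref{sec:branching}, from \cite{Fen09}). Given a foliation ray whose endpoint is near a prescribed point, one approximates its leaf by a periodic leaf fellow-travelling it for a long distance and transfers the endpoint estimate; this yields density of $P$.

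The second ingredient is the sink--source structure from \S\ref{sec:lozenges}, and this is where the hypothesis that $\phi$ is not $\RR$-covered is used. If $g\in\pi_1(M)$ fixes a periodic point $p\in\orb$, then after replacing $g$ by a power it fixes the half-leaves at $p$, and $\Fix(g|_{\partial\orb})$ is exactly the closure of the ideal endpoints of the stable and unstable leaves through the corner points of the lozenge chain of $p$, with $g$ pushing each complementary interval monotonically toward one of its ends. In particular each ideal endpoint of the stable leaf through $p$ is attracting for $g$, and each ideal endpoint of the unstable leaf through $p$ is attracting for $g^{-1}$; on atoroidal $M$ these are honest isolated sinks and sources by \Cref{sink}, and in general the finiteness bounds of \Cref{zplusz} control the way $\Fix(g)$ accumulates.

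Combining the two, the goal is to show $P\subseteq\Lambda$ for every minimal set $\Lambda$. The mechanism: $\Lambda$ must meet the immediate basin of some attracting periodic leaf-endpoint (otherwise $\Lambda$ would lie inside $\Fix(g|_{\partial\orb})$ for every periodic $g$, forcing $\Lambda=\emptyset$ because two ``independent'' periodic orbits have disjoint ideal fixed sets), and then iterating that element pulls the corresponding endpoint of $P$ into $\Lambda$; propagating this to all of $P$ using the density of periodic endpoints and the minimality of $\Lambda$ gives $\Lambda=\ol P=\partial\orb$. The step I expect to be the real work is exactly this last propagation together with the two supporting facts it rests on --- the density of $P$ and the existence of independent periodic orbits with disjoint ideal fixed sets --- that is, converting the dynamical information about $\phi$ (closing, transitivity, convergence of leaves, perfect fits) into genuine control of $\partial\orb$. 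This is delicate near singular orbits and, for non-atoroidal $M$, near the infinite lozenge chains that a $\ZZ\oplus\ZZ$ subgroup can fix, and it is precisely the analysis carried out by Fenley under a no-perfect-fits hypothesis, by Bonatti for Anosov flows, and by Barthelm\'{e}--Bonatti--Mann in general.
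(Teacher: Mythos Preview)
Your outline is a reasonable high-level strategy, but as written it is a sketch rather than a proof: you explicitly defer the ``real work'' to the literature in your last paragraph, and several intermediate steps rely on hypotheses the proposition does not grant. More importantly, the paper's argument is genuinely different and sidesteps precisely the complications you flag.

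The paper argues directly that any $p\in\partial\orb$ can be moved into any open interval $I$. Pick a nonbranching leaf with a ray ending in $I$. The key input you do not use is that when $\phi$ is not $\R$-covered the \emph{noncorner} periodic points are dense in $\orb$ (cited from \cite{BFM}, with no atoroidality assumption). Choose two distinct noncorner periodic points $x_i,x_j$ near that ray; continuity of endpoints forces their leaf-rays to land in $I$ as well. Because the $x_i$ are noncorner, the stabilizer generators $g_i,g_j$ act on $\partial\orb$ with honest multi sink-source dynamics and \emph{finite, disjoint} fixed-point sets, each meeting $I$. An elementary ping-pong lemma (\Cref{lem:sinksource}) then walks $p$ into $I$.

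Restricting to noncorner periodic points is exactly what makes this clean: one never sees infinite lozenge chains or non-isolated boundary fixed points, so neither \Cref{sink} nor \Cref{zplusz} is needed---both of those lemmas assume $M$ atoroidal, so invoking them for the general statement is circular. Your density argument for $P$ likewise goes through transitivity of $\phi$ in $M$, which you justify only in the atoroidal case. Finally, your ``propagation'' step is not complete as stated: knowing one point $q_0\in P$ lies in a minimal set $\Lambda$ gives $\Lambda=\overline{\pi_1(M)\cdot q_0}$ by minimality of $\Lambda$, but this does not yield $\Lambda=\partial\orb$ unless you already know the orbit of $q_0$ is dense, which is the conclusion you are after.
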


\begin{proof}
Fix $p\in \partial\orb$ and an open interval $I \subset \partial\orb$. The set of branching leaves is countable, and it follows that the endpoints of nonbranching leaves are dense in $\partial \orb$; see e.g. \cite[Theorem 3]{bonatti2301action}.
Pick a ray $r$ in a nonbranching leaf with $r(\infty) \in I$. Since $\orb$ is neither trivial
nor skew, the noncorner periodic points are dense in $\orb$ by \cite[Lemma
  2.30]{BFM}. 
Take a sequence of distinct noncorner periodic points $(x_i)$ converging to a point $q$ in the interior of $r$, and let $g_i\in \pi_1(M)$ be a generator of the stabilizer of $x_i$. Since the $x_i$ are noncorner points, $\langle g_i \rangle \neq \langle g_j \rangle$ for $i\ne j$ (see \Cref{sec:lozenges}).

By construction, initial segments of foliation rays $r_i$ based at the $x_i$ converge to the subray of $r$ based at $q$, and since $r$ is regular, we have that $r_i(\infty) \to r(\infty)$. 
Hence we can find sufficiently large $j>i$ so that the foliation rays through $x_i$ and $x_j$ end in $I$. We conclude that $g_i$ and $g_j$ act with multi sink-source dynamics, fixing only the endpoints of the foliation rays based at $x_i$ and $x_j$ (e.g. by \cite[Prop. 3.7 and Lem. 3.8]{BFM}), and that each of these elements has a fixed point in $I$. 
The next lemma finishes the proof.
\end{proof}

\begin{lemma}\label{lem:sinksource}
Suppose that $\alpha,\beta\in \Homeo^+(S^1)$ act with multi sink-source dynamics, and that the fixed point sets $\Fix(\alpha)$ and $\Fix(\beta)$ are disjoint. Let $p\in S^1$, and let $I\subset S^1$ be any open interval containing fixed points of  both $\alpha$ and $\beta$. Then $p$ can be sent into $I$ by an element of $\langle \alpha,\beta\rangle$.
\end{lemma}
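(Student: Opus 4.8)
The plan is to run a ping-pong / north-south dynamics argument adapted to the fact that $\alpha$ and $\beta$ may have more than two fixed points each. First I would set up notation: since $\alpha$ acts with multi sink-source dynamics, $\Fix(\alpha)$ is a finite set that alternates between attracting and repelling fixed points around $S^1$; the same holds for $\beta$. Let $A^+_\alpha$ (resp. $A^+_\beta$) denote the set of attracting fixed points of $\alpha$ (resp. $\beta$) and $A^-_\alpha, A^-_\beta$ the repelling ones. The key dynamical input is: for any open neighborhood $U$ of $A^+_\alpha$, all of $S^1 \smallsetminus U$ is pushed into $U$ by $\alpha^n$ for $n$ large; symmetrically $\alpha^{-n}$ pushes everything outside a neighborhood of $A^-_\alpha$ into that neighborhood. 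I would then fix the interval $I$, which by hypothesis contains a point $a \in \Fix(\alpha)$ and a point $b \in \Fix(\beta)$.

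The main step is to arrange the combinatorics so that the orbit of $p$ lands in $I$. Because $\Fix(\alpha)$ and $\Fix(\beta)$ are disjoint, I can choose a point $a^* \in \Fix(\alpha)$ and $b^* \in \Fix(\beta)$ lying in $I$ such that, after possibly replacing $\alpha$ by $\alpha^{-1}$ and $\beta$ by $\beta^{-1}$, $a^*$ is attracting for $\alpha$ and $b^*$ is attracting for $\beta$ — but one must be careful: an attracting fixed point of $\alpha$ need not lie in $I$. The clean way around this is to note that between two consecutive points of $\Fix(\alpha)$ there is a repelling one, and to use the following shrinking argument. Pick a small interval $J \subset I$ around an attracting fixed point $a^* \in \Fix(\alpha) \cap I$; then $\alpha^n$ maps the complement of a neighborhood of $A^-_\alpha$ into $J$. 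If $p$ (or its image under finitely many moves) avoids a neighborhood of $A^-_\alpha$, we are done. If not, we first apply a power of $\beta$: since $A^-_\alpha$ is finite and disjoint from $\Fix(\beta)$, a high power $\beta^m$ moves every point of a neighborhood of $A^-_\alpha$ off of $A^-_\alpha$ (it moves points toward $A^+_\beta$, none of which is in $A^-_\alpha$). Thus $\beta^m(p)$ lies outside a neighborhood of $A^-_\alpha$, and then $\alpha^n\beta^m(p) \in J \subset I$.

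The step I expect to be the main obstacle is making the ``move $p$ off of the repelling set of $\alpha$'' part fully rigorous when $p$ happens to sit exactly at, or extremely close to, a repelling fixed point of $\alpha$ that is also near the boundary of every convenient interval — one has to track that the relevant neighborhoods can be chosen uniformly and that applying $\beta^m$ genuinely escapes the bad neighborhood rather than sliding $p$ to another repelling point of $\alpha$. This is handled by using disjointness of $\Fix(\alpha)$ and $\Fix(\beta)$ together with finiteness: choose the neighborhood $N$ of $A^-_\alpha$ small enough that its closure is disjoint from $\Fix(\beta)$; then $\beta^m(\overline N)$ converges (as $m \to \infty$, in the Hausdorff sense on compacta avoiding $A^-_\beta$) into an arbitrarily small neighborhood of $A^+_\beta$, which is disjoint from $\overline N$; hence for large $m$, $\beta^m(\overline N) \cap \overline N = \varnothing$, so $\beta^m(p) \notin N$ whenever $p \in N$. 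Combined with the north–south behavior of $\alpha^n$ on $S^1 \smallsetminus N$, this yields an element $\alpha^n \beta^m \in \langle \alpha, \beta \rangle$ (or $\alpha^n$ alone, if $p \notin N$ to begin with) carrying $p$ into $I$, completing the proof. $\qed$
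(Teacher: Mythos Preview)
There is a genuine gap. Your key step asserts that for large $n$, $\alpha^n$ maps the complement of a neighborhood of $A^-_\alpha$ into a small interval $J\subset I$ around a \emph{single} chosen attractor $a^*$. This is false for multi sink-source dynamics: high powers of $\alpha$ send points outside a neighborhood of $A^-_\alpha$ into a neighborhood of \emph{all} of $A^+_\alpha$, distributed according to the basins of attraction, not into $J$ alone. If $p$ (or $\beta^m(p)$) lies in the basin of an attractor $a'\in A^+_\alpha$ with $a'\notin I$, then $\alpha^n\beta^m(p)\to a'$, not into $I$. Your later Hausdorff-convergence paragraph establishes only that $\beta^m(p)$ lands near $A^+_\beta$, and hence that $\alpha^n\beta^m(p)$ lands near $A^+_\alpha$; it does not single out $a^*$. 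So the two-step word $\alpha^n\beta^m$ is not enough in general.

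The paper's proof handles the multiplicity of attractors by an iterative ``leapfrogging'' argument rather than a single contraction: move $p$ clockwise toward $I$, at each stage applying a power of $\alpha$ to push $p$ arbitrarily close to the nearest fixed point of $\alpha$ ahead of it, then a power of $\beta$ to push past that and close to the next fixed point of $\beta$, and so on. Disjointness of $\Fix(\alpha)$ and $\Fix(\beta)$ guarantees each step makes strict progress, finiteness of both fixed sets guarantees termination, and the hypothesis that $I$ contains fixed points of both ensures one does not overshoot. Your approach could be repaired by iterating, but then it essentially becomes this argument.
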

\begin{proof}
Let $a_1$ be the first point of $\Fix(\alpha)$ encountered upon moving clockwise from $p$ toward $I$. If $a_1\in I$, a single element of $\langle \alpha\rangle$ suffices to move $p$ into $I$. Otherwise, by applying an element of $\langle\alpha\rangle$ and relabeling $p$, we can ensure $p$ is close enough to $a_1$ so that $[p,a_1]\cap \Fix(\beta)=\varnothing$. Now let $b_1$ be the first point of $\Fix(\beta)$ encountered upon moving clockwise from $a_1$ toward $I$. The same reasoning shows that an element of $\langle \beta\rangle$ suffices to either move $p$ into $I$, or move $p$ close enough to $b_1$ so that $[p, b_1]\cap \Fix(\alpha)=\varnothing$. The fact that $I$ contains fixed points of $\alpha$ and of $\beta$ ensures that we do not overshoot $I$. Since $\Fix(\alpha)$ and $\Fix(\beta)$ are finite, finitely many of these steps suffice to move $p$ into $I$.
\end{proof}

Note that if $\phi$ is $\R$-covered, many intervals in $\del\orb$ do not contain both endpoints of any leaf of $\orb^s$ or $\orb^u$. The next lemma shows that this in fact characterizes $\R$-covered flows.

\begin{corollary}\label{iseethearch}
Suppose that $\phi$ is non $\R$-covered, and let $I$ be an interval in $\del \orb$. Then there exists a leaf of $\orb^s$ with two ideal endpoints contained in $I$. The same is true for $\orb^u$. 
\end{corollary}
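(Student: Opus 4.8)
The statement I need to prove is \Cref{iseethearch}: if $\phi$ is non $\R$-covered and $I \subset \del\orb$ is any interval, then $\orb^s$ has a leaf with both ideal endpoints in $I$, and likewise for $\orb^u$. The natural strategy is to mimic the engine of the proof of \Cref{prop:minimal}, harvesting a noncorner periodic point whose stable leaf has both ends inside $I$. First I would use the density of endpoints of nonbranching leaves (cited in the proof of \Cref{prop:minimal} from \cite{bonatti2301action}) to fix a nonbranching leaf face with a ray $r$ whose ideal endpoint $r(\infty)$ lies in $I$; regularity of $r$ will be important so that endpoints of nearby rays are close to $r(\infty)$. Then, since $\phi$ is non $\R$-covered, \cite[Lemma 2.30]{BFM} gives density of noncorner periodic points, so I can choose a noncorner periodic point $x_i$ in the interior of $r$ with $x_i \to q \in r$; the foliation rays $r_i$ of $\orb^s$ based at $x_i$ then satisfy $r_i(\infty) \to r(\infty) \in I$, so for $i$ large the stable ray $r_i$ has its forward endpoint in $I$.

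\textbf{Key steps.} The content beyond \Cref{prop:minimal} is getting \emph{both} endpoints of a single stable leaf into $I$. Here I would use that $x_i$ is a \emph{noncorner} periodic point: its stabilizer's generator $g_i$ then acts on $\del\orb$ with exactly the four ideal endpoints of the stable and unstable leaves through $x_i$ as fixed points (this is the "multi sink-source with $2n = 4$" discussion in \Cref{sec:lozenges}, or \cite[Prop. 3.7, Lem. 3.8]{BFM}). In particular the stable leaf $\ell_i$ through $x_i$ is a single properly embedded line (noncorner periodic points have nonbranching stable and unstable leaves, as noted at the end of \Cref{sec:branching}), with two distinct ideal endpoints, and both of these are fixed by $g_i$. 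Now I shrink: because $r$ is regular and the $x_i$ march toward $q$ in the interior of $r$, and because leaves converging to a single leaf slice have endpoints converging to its endpoints (\cite[Proposition 3.1]{BFM}), \emph{both} half-leaves of $\ell_i$ emanating from $x_i$ — the forward ray $r_i$ and the backward ray — converge, as $i \to \infty$, to rays contained in $r$'s leaf, hence their endpoints both converge to $r(\infty) \in I$. So for $i$ sufficiently large, both ideal endpoints of $\ell_i$ lie in $I$, which is exactly what is wanted. The statement for $\orb^u$ follows by the identical argument applied to the unstable leaf through the same noncorner periodic point $x_i$ (whose unstable leaf is also a single line with two ends, and whose unstable rays based at $x_i$ also converge — after instead choosing the initial nonbranching ray $r$ to be \emph{unstable} — to $r$).

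\textbf{Main obstacle.} The delicate point is the claim that \emph{both} half-leaves of $\ell_i$ at $x_i$ limit into $I$. One ray ($r_i$) is handled exactly as in \Cref{prop:minimal} via $r_i(\infty) \to r(\infty)$. For the opposite ray I must argue that as $x_i$ converges to $q$ along the leaf face $r$, the stable leaf face of $\orb^s$ through $x_i$ (not just the short segment near $x_i$, but the whole line) is forced to stay near the line leaf containing $r$, so that the backward ray is also trapped near $r$ and accumulates only on $r(\infty)$. This is where I would lean on \cite[Proposition 3.1]{BFM}: the $\ell_i$ form a sequence of leaf slices, and since $x_i \to q$ with $q$ interior to $r$ and $r$'s leaf nonbranching, the $\ell_i$ converge to the leaf slice through $q$ — which is $r$'s leaf itself — so their endpoints converge to its two endpoints; but both of those endpoints coincide since two stable rays making no perfect fit with anything in between share an endpoint (density of endpoints, \Cref{lem:omniFenley}(ii)-style reasoning on $\del\orb$, or simply that $r$ is regular so the full leaf through $q$ has a single ideal point on that side) — actually the cleanest phrasing is that the leaf slice through $q$ is a subslice of $r$'s leaf whose two ends are $r(\infty)$ and the backward end of $r$, and since we took $r$ itself to be a \emph{ray} of a leaf face we may as well take this leaf face to be a full bi-infinite line contained in $I$'s preimage by choosing $r(\infty)$ with enough room, or simply observe that it suffices for the backward end to be close to $r(\infty)$, which holds once $x_i$ is deep enough along $r$. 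I'd want to state this carefully but expect no serious difficulty; it is the same machinery already invoked for \Cref{prop:minimal} and \Cref{sink}.
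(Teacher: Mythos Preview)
Your approach has a genuine gap in the ``Main obstacle'' paragraph, and it is exactly the obstacle you flagged: you have not shown that \emph{both} endpoints of the stable leaf $\ell_i$ through $x_i$ land in $I$.

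Here is the problem. You take a ray $r$ in a nonbranching stable leaf $L$ with $r(\infty)\in I$, and noncorner periodic points $x_i\to q$ with $q$ interior to $r$. Then the stable leaves $\ell_i$ through $x_i$ converge to $L$ (since $L$ is nonbranching), and \cite[Proposition 3.1]{BFM} tells you that the two endpoints of $\ell_i$ converge to the two endpoints of $L$. But $L$ is a properly embedded line with two \emph{distinct} ideal points: one is $r(\infty)\in I$, the other is the backward end of $L$, about which you know nothing. Your claim that ``both of those endpoints coincide'' is simply false --- two rays in the same leaf of $\orb^s$ never share an ideal point (they intersect in $\orb$, so they cannot be joined by a chain of perfect fits). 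The proposed fixes are circular (choosing $L$ with both ends already in $I$ is the statement to be proved) or incorrect (the backward endpoint of $\ell_i$ does not approach $r(\infty)$ as $x_i\to q$; it approaches the \emph{other} end of $L$).

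The paper's proof supplies the missing idea: rather than using a \emph{sequence} of periodic points converging to an interior point of $r$, fix a \emph{single} noncorner periodic point $x$ and arrange (using \Cref{prop:minimal}) that an \emph{unstable} half-leaf $r$ through $x$ ends in $I$. Now take any regular stable leaf $\ell$ crossing $r$ and \emph{iterate} it under a generator $g$ of $\stab(x)$. The iterates $g^i(\ell)$ cross $r$ at points escaping to infinity along $r$, so both endpoints of $g^i(\ell)$ are forced toward the single point $r(\infty)$ --- unless the $g^i(\ell)$ accumulate on a $g$-periodic leaf making a perfect fit with $r$, which would make $x$ a corner point. The dynamics of $g$ is what squeezes both ends into $I$; your static convergence-of-leaves picture cannot do this.
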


\begin{proof}
Let $x$ be a nonsingular, noncorner periodic point, fixed by some $g \in \pi_1(M)$.
\Cref{prop:minimal} implies that, up to translating $x$ and conjugating $g$, we may assume that some unstable ray $r$ through $x$ has its endpoint in $I$. Then for any regular stable leaf $\ell$ crossing $r$, the endpoints of $g^i(\ell)$ converge to $r(\infty) \in I$: otherwise the $g^i(\ell)$ would limit on a periodic leaf making a perfect fit with $r$, forcing $x$ to be a corner point by the  discussion in \Cref{sec:lozenges}. 
\end{proof}

\subsection{Rays in leaves and spike regions}
For the remainder of \Cref{sec:fibers} we fix a particular leaf $\lambda$ of $\wt \FF$.

We now recall some more fundamental work of Fenley, beginning with the relation between rays and geodesics in the leaf $\lambda$. 

\begin{lemma}[Rays vs. geodesics, Fenley]
\label{lem:omniFenley2}
Stable and unstable foliation rays in $\lambda$ enjoy the following properties:
\begin{enumerate}[label=(\roman*)]
\item \cite[Lemma 6.4]{Fen09}: 
There exists $\delta>0$ so that for any stable/unstable ray $r$ in $\lambda$, if $r^*$ denotes its geodesic representative, $r^*\subset N_\delta(r)$. Moreover, $\delta$ can be chosen \emph{independently} of $\lambda$.
\item \cite[Proposition 6.11]{Fen09}: If the leaf space of $\orb^s$ is Hausdorff, then there exists $k>0$, not depending on $\lambda$, such that all leaf slices of $\lambda^s$ are uniform $k$-quasigeodesics. The same is true for $\orb^u$.
\end{enumerate}
\end{lemma}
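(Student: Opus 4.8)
Since both statements are due to Fenley, the plan is to cite \cite[Lemma 6.4]{Fen09} for (i) and \cite[Proposition 6.11]{Fen09} for (ii); no new argument is required, and I would only recall the mechanism for orientation, stressing the uniformity in $\lambda$, which is the one delicate point.

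For (i), fix a stable foliation ray $r\subset W\cap\lambda$ with $W$ a leaf of $\wt W^s$, and let $r^*$ be the geodesic ray in $\lambda$ with the same finite and ideal endpoints — the latter is well defined by \Cref{lem:omniFenley}(i). The substance of the statement is that $r$ is a uniform quasigeodesic in the hyperbolic plane $\lambda$. Since $\phi$ is transverse to $\FF$ and $M$ is compact, the pattern cut on any leaf of $\wt\FF$ by leaves of $\wt W^s$ and $\wt W^u$ admits a uniformly bounded local model (a product box around each flowline), and Fenley uses this, together with the constraints imposed by Novikov's theorem and atoroidality, to bound how much $r$ can backtrack; hence $r$ is a uniform quasigeodesic. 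Granting this, the stability of quasigeodesics in the hyperbolic plane gives $r^*\subset N_\delta(r)$ with $\delta$ depending only on the quasigeodesic constants, and therefore not on $\lambda$.

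For (ii), the hypothesis that the leaf space of $\orb^s$ is Hausdorff rules out branching of stable leaves, so every leaf slice of $\lambda^s$ is a properly embedded line with two \emph{distinct} ideal endpoints in $\del\lambda$, not merely a ray. Applying the estimate of (i) to each of its two subrays and ruling out a central fold — again using the uniform local model and the absence of branching — shows that the leaf slice is a uniform $k$-quasigeodesic; the $\orb^u$ case is symmetric.

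The main obstacle, and essentially the only point that needs care, is that $\delta$ and $k$ must be independent of $\lambda$. If this failed one would take rays (resp. leaf slices) $r_n$ in leaves $\lambda_n$ with quasigeodesic constants tending to infinity, translate each by a deck transformation so that the offending segment of $r_n$ meets a fixed compact subset of $M$, pass to a geometric limit of the picture, and contradict either the well-definedness of ideal endpoints from \Cref{lem:omniFenley}(i) or the Leaf Pocket Theorem — the compactness argument that underlies Fenley's proofs.
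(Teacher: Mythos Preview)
Your proposal is correct and matches the paper exactly: the paper offers no proof for this lemma, simply stating the two items with inline citations to \cite[Lemma 6.4]{Fen09} and \cite[Proposition 6.11]{Fen09}. Your expository sketch of the mechanism goes beyond anything the paper includes, but since the core plan---defer entirely to Fenley---is identical, there is nothing further to compare.
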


In general, leaf slices in $\lambda$ are {not} quasi-geodesics---in fact, there can  exist leaf slices whose ideal endpoints are equal. In analyzing this behavior, Fenley discovered formations that he calls spike regions, defined as follows.

A \emph{stable spike region} $\Sigma\subset\lambda$ is a closed, connected subset of $\lambda$ satisfying the following.
\begin{itemize}
\item $\Sigma$ is bounded by finitely many mutually disjoint stable leaf faces $\ell_1,\dots,\ell_n$, $n\ge 2$, each of which is regular to the side containing $\Sigma$.
\item Each endpoint of an $\ell_i$ in $\del \lambda$ is an endpoint of one of the other $\ell_i$'s.
\item $\Sigma$ contains no singular points in its interior. There is a distinguished point  $p\in \del\lambda$ such that if $\ell$ is a stable leaf in $\Sigma$, then both endpoints of $\ell$ are equal to $p$.
\end{itemize}

An \emph{unstable spike region} is defined similarly. See \Cref{fig:spike}.

\begin{figure}
\begin{center}
\includegraphics[height=1.5in]{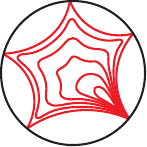}
\caption{A stable spike region in $\lambda$.}
\label{fig:spike}
\end{center}
\end{figure}

Thus a spike region $\Sigma$ has finitely many ends, each corresponding to a single ideal point in $\del \lambda$, and all the leaves in $\intr(\Sigma)$ nest down to a distinguished ideal point.

The following lemma of Fenley (\cite[Prop 6.7]{Fen09}) gives a useful characterization of spike regions and describes the behavior of their ends when projected to $M$.

\begin{lemma}[Spike regions, Fenley]
\label{lem:spike}
Let $\ell$ be a slice leaf of $\lambda^s$. Suppose that both ends of $\ell$ correspond to the same ideal point in $\del \lambda$. Then $\ell$ is contained in the interior of a stable spike region. Moreover, for each end of the spike region, the image in $M$ of each neighborhood of the end either is a Reeb annulus in a leaf of $\mc F$ or spirals on such a Reeb annulus.
\end{lemma}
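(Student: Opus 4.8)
The plan is to argue entirely inside the leaf $\lambda$, identified with $\HH^2$ via the leafwise hyperbolic metric, with compactification $\ol\lambda=\lambda\cup\del\lambda$. Since $\ell$ is properly embedded with both ends converging to the single ideal point $p$, the set $\ell\cup\{p\}$ is a Jordan curve in $\ol\lambda$ and bounds a closed disk $\ol D$ with $\ol D\cap\del\lambda=\{p\}$; write $D=\ol D\cap\lambda$. The first step is to observe that the bifoliation $(\lambda^s,\lambda^u)$ is sharply constrained inside $D$: a stable leaf slice and an unstable leaf slice intersect in a compact connected set (this is the bigon criterion -- an innermost bigon would carry a foliation of a disk contradicting the Poincar\'e--Hopf index formula), so an unstable leaf slice that enters $\intr D$ cannot cross $\ell$ and return, and hence either stays in $D$, in which case its ideal endpoints lie in $\ol D\cap\del\lambda=\{p\}$, or crosses $\ell$ once and has one endpoint at $p$. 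In particular the closure of any stable leaf slice meeting $\intr D$ lies in $\ol D$, so such a slice has \emph{both} endpoints equal to $p$. Thus $D$ is already foliated by $p$--ended stable lines, and what remains is to carve out of it an honest spike region: finitely many boundary faces, and no singular points in the interior.

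For this, consider the family $\mathcal S$ of stable leaf faces $\ell'\subset D$ that are regular to the side $D_{\ell'}$ whose ideal boundary is $\{p\}$, with $D_{\ell'}\subset D$, $\ell\subset D_{\ell'}$, and $D_{\ell'}$ containing no singular points. Partially order $\mathcal S$ by reverse inclusion of the regions $D_{\ell'}$, take the minimal elements $\ell_1,\dots,\ell_n$, and set $\Sigma=\ol D\setminus\bigcup_i\intr D_{\ell_i}$. Then $\ell\subset\intr\Sigma$ (both ends of $\ell$ land at $p$, so $\ell$ is an interior $p$--ended line, not one of the bounding faces), each $\ell_i$ has one endpoint $p$ and one free endpoint $q_i\ne p$, and minimality together with density of stable endpoints (\Cref{lem:omniFenley}(ii)) forces each $q_i$ to be an endpoint of a neighbouring $\ell_j$, giving the endpoint--pairing condition and connectedness of $\Sigma$. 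The essential step -- and the one I expect to be the main obstacle -- is the \textbf{finiteness} of $n$, together with the companion fact that only finitely many singular leaves meet $D$ (so that the no--singular--point condition is attainable at all). The mechanism should be: an infinite nested sequence of such faces would have free endpoints accumulating to some $q_\infty\in\del\lambda$ and the faces would then exhibit branching; but branching leaves are periodic with only finitely many $\pi_1(M)$--orbits by Fenley, and \Cref{zplusz} (which uses atoroidality to uniformly bound the number of branching leaves fixed by a single group element) then rules this out, with the same inputs bounding the number of relevant singular leaves. Turning this counting heuristic into a correct argument -- converting the nested family inside $\lambda$ into branching leaves fixed by one element -- is where the real work lies.

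The last step is to read off the behaviour of the ends of $\Sigma$ in $M$. Fix an end bounded by faces $\ell_i,\ell_j$ sharing the free endpoint $q$. By \Cref{lem:omniFenley2}(i) each of $\ell_i,\ell_j$ stays within a uniformly bounded distance of a geodesic ray, and those two rays both land at $q$, so $\ell_i$ and $\ell_j$ are asymptotic; the spike between them near $q$ is transversally foliated by the compact unstable arcs joining $\ell_i$ to $\ell_j$. Projecting $\lambda$ to the leaf $F$ of $\FF$ it covers, the end maps into $F$ as a half-open annular region whose two boundary leaf-faces are regular to the interior, so the transverse unstable foliation there has the Reeb pattern. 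If $q$ is periodic, fixed by some $g\in\pi_1(M)$, the lozenge and perfect--fit discussion of \Cref{sec:lozenges} relates $\ell_i$ and $\ell_j$ by a power of $g$, so the image in $M$ is a genuine Reeb annulus in a leaf of $\FF$; if $q$ is not periodic, the image is a region spiralling onto such a Reeb annulus. Assembling the three steps proves the lemma.
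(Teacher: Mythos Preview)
The paper does not prove this lemma; it is quoted directly from Fenley \cite[Prop.~6.7]{Fen09} and used as a black box. So there is no proof in the paper to compare against, and your proposal is an attempt to reprove a cited result rather than to reconstruct an argument the authors give.

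On its own terms, the proposal has two genuine gaps. First, the construction of $\Sigma$ is internally inconsistent. You take $\ol D$ to be the disk with $\ol D\cap\del\lambda=\{p\}$, so $\ell$ is the topological boundary of $D$ in $\lambda$. You then restrict to leaf faces $\ell'\subset D$ with $D_{\ell'}\subset D$ and $\ell\subset D_{\ell'}$; but $\ell=\del D$ forces $D_{\ell'}=D$, so the family $\mathcal S$ is trivial and the carving-out procedure never gets started. More to the point, the spike region must contain $\ell$ in its \emph{interior}, hence must be strictly larger than $D$ on at least one side, so any construction that only carves pieces out of $\ol D$ cannot succeed. The correct move (as in Fenley's argument) is to push outward from $\ell$, not inward. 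Second, you explicitly flag the finiteness of the boundary faces as ``where the real work lies'' and offer only a heuristic; invoking \Cref{zplusz} requires first producing a single group element fixing infinitely many branching leaves, and nothing in your sketch explains why the nested faces you envision are branching, let alone fixed by a common element.

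The Reeb-annulus step is also underargued: asymptotics of $\ell_i,\ell_j$ at $q$ do not by themselves give a Reeb pattern in the transverse unstable foliation, and the periodic/non-periodic dichotomy for $q$ is not what governs the ``is a Reeb annulus vs.\ spirals on one'' alternative (that depends on whether the leaf $\lambda$ itself is periodic under the deck group, i.e.\ covers a compact leaf). Since the paper treats the lemma as input from \cite{Fen09}, the cleanest fix is simply to cite it as the authors do.
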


We will need the following uniform finiteness statement for spike regions.

\begin{lemma}\label{finitespikes}
There are at most finitely many spike regions having a common endpoint $\sigma \in \del\lambda$. Moreover, the bound is independent of $\lambda$.
\end{lemma}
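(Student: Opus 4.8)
The plan is to reduce to a periodicity/finiteness statement that can be attacked with \Cref{zplusz} and the structure theory of \Cref{sec:lozenges}. Fix an ideal point $\sigma\in\del\lambda$ and suppose $\Sigma$ is a stable spike region with $\sigma$ as one of its distinguished ideal points; by \Cref{lem:spike}, a neighborhood of each end of $\Sigma$ maps into $M$ as (a spiral on) a Reeb annulus in a leaf of $\FF$. The core curve of such a Reeb annulus is a periodic orbit of $\phi$, so the stable leaf face $\ell_i$ of $\Sigma$ limiting to $\sigma$ is a \emph{periodic} half-leaf: there is $g_i\in\pi_1(M)$ fixing the leaf through $\ell_i$ and fixing $\sigma$ (under $\pi_\lambda$, but more relevantly fixing the corresponding ideal point in $\del\orb$ of the frontier component of $\Omega_\lambda$ that $\ell_i$ maps to). First I would make this precise: translate the spike region picture from $\lambda$ to $\Omega_\lambda\subset\orb$ via the canonical homeomorphism, so that the leaf faces $\ell_1,\dots,\ell_n$ bounding $\Sigma$ correspond to leaf slices in $\fr\Omega_\lambda$, all of whose relevant ends share a single ideal point $\hat\sigma\in\del\orb$ (using \Cref{lem:single_valued} and \Cref{frontiercollapse} to see that $\pi_\lambda$ collapses exactly to $\sigma$ there).

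The key step is then a counting argument at the point $\hat\sigma\in\del\orb$. Every leaf face of a spike region with endpoint $\sigma$ gives rise to a foliation ray in $\orb$ terminating at $\hat\sigma$, and these rays lie in periodic leaves (by the Reeb annulus description and \Cref{lem:spike}). By the discussion in \Cref{sec:lozenges} and \Cref{sec:branching}, foliation rays terminating at a common point of $\del\orb$ are linked by a finite chain of perfect fits, and a periodic ray terminating at $\hat\sigma$ lies in a leaf through a corner point of a lozenge. I would next invoke the remark immediately following \Cref{zplusz}: only finitely many rays terminate at any given point of $\del\orb$, with a bound independent of the point. Since distinct spike regions with common endpoint $\sigma$ must contribute distinct leaf faces $\ell_i$ (their interiors are disjoint closed connected regions with disjoint bounding leaves), and each such $\ell_i$ maps to a distinct foliation ray at $\hat\sigma$, the number of spike regions through $\sigma$ is bounded by the number of such rays, hence finite and uniformly bounded.

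The main obstacle I anticipate is bookkeeping rather than conceptual: I must be careful that distinct spike regions really do yield distinct frontier rays at $\hat\sigma$ (two spike regions sharing an endpoint could a priori share a bounding leaf face on the $\sigma$ side), and I must confirm that the uniform finiteness in \Cref{zplusz} — which bounds the number of branching/blowup leaves fixed by a single $g$ — actually translates into a bound on the number of rays at a point of $\del\orb$ uniform in the point. For the first issue, I would argue that if two spike regions shared a bounding leaf face at $\sigma$ then one would be contained in the other or they would agree near that end, so after passing to the ``outermost'' ones we get genuinely distinct faces. For the second, I would use that $\del\orb$ has finitely many $\pi_1(M)$-orbits of points that are endpoints of branching leaves or blowup segments (Fenley's finiteness, as used in the proof of \Cref{zplusz}), together with the perfect-fit chain description (\Cref{fig:perfectfits}) bounding the number of rays at any point by a constant depending only on these finitely many orbit representatives. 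The unstable spike region case is identical by symmetry.
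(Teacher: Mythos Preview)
Your proposal has a genuine gap at its central step. The leaf faces $\ell_1,\dots,\ell_n$ bounding a spike region $\Sigma$ are leaves of $\lambda^s$ lying \emph{in} $\lambda$; under the homeomorphism $\lambda\cong\Omega_\lambda$ they become stable leaves lying \emph{inside} $\Omega_\lambda$, not leaf slices in $\fr\Omega_\lambda$. So the sentence ``the leaf faces $\ell_1,\dots,\ell_n$ bounding $\Sigma$ correspond to leaf slices in $\fr\Omega_\lambda$'' is simply false, and the rest of the argument does not get off the ground.

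Even setting that aside, there is a second and more serious problem: you assume that the rays coming from distinct spike regions at $\sigma$ all terminate at a \emph{single} point $\hat\sigma\in\del\orb$. But all you know is that their images under $\pi_\lambda$ agree; the preimage $\pi_\lambda^{-1}(\sigma)$ could a priori be a nondegenerate arc of $\del\orb$, with the various rays landing at distinct points of that arc. The bound on the number of rays at one point of $\del\orb$ then gives you nothing. In fact the paper uses \Cref{finitespikes} as an ingredient in the proof of \Cref{thm:gaps} (the characterization of the fibers of $\pi_\lambda$), so invoking any structure of $\pi_\lambda^{-1}(\sigma)$ here would be circular. The claim that the Reeb annulus core is a periodic orbit of $\phi$ is also unjustified: the annulus lives in a leaf of $\FF$, transverse to the flow.

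For comparison, the paper's argument is purely metric and never leaves $\lambda$ and $M$: take the constant $\delta$ from \Cref{lem:omniFenley2}(i) and the minimal width $c>0$ of a Reeb annulus in any leaf of $\FF$. For any two spike regions $\Sigma_1,\Sigma_2$ at $\sigma$, rays in their boundaries ending at $\sigma$ are eventually within $2\delta$ of one another; projecting short connecting paths to $M$ and passing to a limit yields a path of length $\le 2\delta$ in a single leaf that must cross the Reeb annulus of every intermediate spike region. Since each crossing costs at least $c$, the number of spike regions is at most $2\delta/c+2$, uniformly in $\lambda$.
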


\begin{proof}
Let $\delta$ be the constant from \Cref{lem:omniFenley2} and fix $c>0$ to be the minimal width of any Reeb annulus in any leaf of $\mc F$ (as appearing in \Cref{lem:spike}), which exists by compactness of $M$. We will bound the number of spike regions with an ideal point at $\sigma$ by $\frac{2\delta}{c}+2$.

Suppose $\Sigma_1$ and $\Sigma_2$ are spike regions in $\lambda$ sharing the ideal point $\sigma$. Let $r_1$ and $r_2$ be two rays in the frontiers of $\Sigma_1$ and $\Sigma_2$, respectively, such that $r_1(\infty)=r_2(\infty)=\sigma$. By \Cref{lem:omniFenley2}(i), there exist sequences $(p_i)$ and $(q_i)$ in $r_1$ and $r_2$ respectively that escape compact sets and such that $d_\lambda(p_i,q_i)<2\delta$. In particular, for each $i$ we can find a path $\alpha^i$ from $p_i$ to $q_i$ whose length in $\lambda$ is at most $2\delta$. 

Denote the projection of $\alpha^i$ to $M$ by $\alpha^i_M$. These are paths of length at most $2\delta$, each contained in a leaf of $\mc F$. After passing to a subsequence they converge to a path $\alpha_M$ of length at most $2\delta$. By covering $\alpha_M$ with foliation charts, we can see that $\alpha_M$ is contained in a single leaf $\lambda'_M$ of $\mc F$. 
By \Cref{lem:spike}, the projections to $M$ of neighborhoods of the ends of $\Sigma_1$ and $\Sigma_2$ spiral around Reeb annuli $A_1$ and $A_2$. This implies that all accumulation points of the projections must lie in the leaves containing $A_1$ and $A_2$, so we conclude that $A_1$ and $A_2$ both lie in $\lambda'_M$. 
Hence, $\alpha_M$ is a path from $A_1$ to $A_2$.

If $\Sigma'$ is any spike region between $\Sigma_1$ and $\Sigma_2$, 
meaning that the rays of $\Sigma'$ limiting to $\sigma$ lie between $r_1$ and $r_2$, 
then for large $i$ the $\alpha^i$ cross the end of $\Sigma'$ associated to $\sigma$. Hence, as above, the limiting Reeb annulus associated to the projection of this end of $\Sigma'$ to $M$ 
is also crossed by $\alpha_M$. Since $\alpha_M$ has length at most $2\delta$, it crosses a Reeb annulus at most $2\delta/c$ times, where ``crosses" refers to passing from one boundary component to the other. This means that the number of spike regions between $\Sigma_1$ and $\Sigma_2$ is at most $2\delta/c$. Since $\Sigma_1$ and $\Sigma_2$ were arbitrary, we see the number of spike regions having an ideal point at $\sigma$ is at most $\frac{2\delta}{c}+2$ as desired.
\end{proof}

Finiteness of spike regions implies the following `non-collapsing' lemma.

\begin{lemma}\label{lem:nointervalcollapse}
There exists no open interval in $\ol \Omega_\lambda\cap \del\orb$ that is mapped to a single point by $\pi_\lambda$.
\end{lemma}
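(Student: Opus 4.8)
The plan is to argue by contradiction: suppose $J \subset \ol\Omega_\lambda \cap \del\orb$ is an open interval collapsed by $\pi_\lambda$ to a single point $z \in \del\lambda$. First I would unwind what this means in terms of the bifoliated picture. Recall that $\pi_\lambda$ was built from the monotone map $\del\ol\Omega_\lambda \to \del\lambda$, which on the dense set $\mc E_\Omega$ of endpoints of foliation rays is induced by the identification $\mc E_\Omega \to \mc E_\lambda$ of \Cref{lem:single_valued}. So if $J$ is collapsed, then every foliation ray endpoint in $J$ is sent to the same point $z \in \del\lambda$; equivalently, infinitely many stable and unstable leaf faces of $\Omega_\lambda$ have endpoints crowding into $J$ but all of whose corresponding rays in $\lambda$ terminate at $z$. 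The goal is to show that this forces infinitely many spike regions at $z$ (or some related uniform-finiteness violation), contradicting \Cref{finitespikes}.

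The key steps, in order: (1) Using density of ray endpoints in $\del\ol\Omega_\lambda$ (\Cref{obs:shadow}) away from blowup segments, pick a sequence of foliation rays $r_n$ in $\Omega_\lambda$ whose endpoints $x_n \in \del\orb$ lie in $J$, are distinct, and converge monotonically to an interior point of $J$. Passing to a subsequence, assume the $r_n$ are all stable (or all unstable). Since all $x_n$ get collapsed to $z$ by $\pi_\lambda$, the corresponding rays $\bar r_n$ in $\lambda$ (under the canonical identification $\Omega_\lambda \cong \lambda$) all have endpoint $z$ in $\del\lambda$. (2) Now I want to produce a leaf slice of $\lambda^s$ — or a whole spike — both of whose ends limit to $z$. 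The rays $\bar r_n$ lie in leaf faces of $\lambda^s$; since they all end at $z$ while their ``root'' ends diverge in $\lambda$, some pair $\bar r_m, \bar r_n$ will have the property that a leaf slice of $\lambda^s$ joining portions of them (or a leaf face between them) has both ideal endpoints equal to $z$. Applying \Cref{lem:spike}, this slice sits inside a stable spike region $\Sigma_n$ with distinguished ideal point $z$. (3) The crux is that the $x_n$ being \emph{distinct} points of $\del\orb$ and the rays $r_n$ lying in \emph{distinct} leaves of $\orb^s$ — this is where I need to be careful — forces the spike regions $\Sigma_n$ to be distinct (or at least to recur infinitely often), producing infinitely many spike regions at $z$ and contradicting \Cref{finitespikes}.

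The main obstacle I anticipate is Step (3): translating ``infinitely many distinct collapsed ray endpoints in $\del\orb$'' into ``infinitely many distinct spike regions at $z$.'' The subtlety is that a single spike region $\Sigma$ has only finitely many ends, each a single ideal point, yet $\intr(\Sigma)$ contains infinitely many leaves nesting down to the distinguished point — so infinitely many rays of $\lambda$ ending at $z$ could all come from \emph{one} spike region, with no contradiction. To get around this I need to use that the $x_n \in \del\orb$ are distinct endpoints of \emph{distinct} leaves of $\orb^s$, together with the fact (\Cref{sec:flowspace}) that leaves of $\orb^s$ sharing an ideal point in $\del\orb$ are disjoint and joined by a finite perfect-fit chain, plus \Cref{zplusz} (only finitely many branching leaves fixed, bounded number of rays at a point). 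The point is that the interior leaves of a single spike region $\Sigma$ of $\lambda$, when pushed to $\Omega_\lambda$, correspond to leaves of $\orb^s$ that are \emph{not} frontier leaves of $\Omega_\lambda$ and whose endpoints in $\del\orb$ do \emph{not} land in the collapsed interval $J$ — because the spike's distinguished end maps to a single boundary point, not an interval. So an honest interval $J \subset \del\orb$ collapsed by $\pi_\lambda$ must be accounted for by \emph{frontier} behavior, i.e. by infinitely many distinct frontier leaf faces of $\Omega_\lambda$, and via \Cref{frontiercollapse} and \Cref{lem:spike} each (or each consecutive pair) contributes a distinct spike region at $z$, yielding the contradiction. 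I would also note the case where the $r_n$ accumulate along a blowup segment must be handled separately, but there the endpoints of rays are \emph{not} dense, so no interval of collapsed points can arise this way either; alternatively one blows down to the pseudo-Anosov flow as in \Cref{monotonemaps} to remove blowup segments from the start.
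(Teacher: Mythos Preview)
Your proposal has a genuine gap: it misses the case split that drives the paper's proof, and your spike-region strategy fails outright in one of the cases.

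The paper splits according to whether $\phi$ is non-$\RR$-covered or skew Anosov. In the non-$\RR$-covered case the key input is \Cref{iseethearch}: every open interval in $\del\orb$ contains \emph{both} endpoints of some stable leaf. Applied inside $J$, this produces a stable leaf of $\Omega_\lambda$ whose two ideal endpoints both land at $\sigma$ under $\pi_\lambda$, hence a spike region by \Cref{lem:spike}; iterating inside nested subintervals gives infinitely many, contradicting \Cref{finitespikes}. Your Step~(2) is groping toward this, but ``a leaf slice of $\lambda^s$ joining portions of $\bar r_m$ and $\bar r_n$'' does not exist when the rays lie in distinct leaves, and you never explain why the \emph{other} end of the leaf containing $\bar r_n$ should also hit $z$ --- that is precisely what \Cref{iseethearch} supplies.

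In the skew Anosov case your approach breaks down entirely. There every stable (and unstable) leaf has its two ideal endpoints on \emph{opposite} components $\del_\pm\orb$, so no leaf has both endpoints in a single interval $J$, and by \Cref{lem:omniFenley2}(ii) the leaves of $\lambda^{s/u}$ are uniform quasigeodesics with distinct endpoints --- there are no spike regions at all. The paper instead uses a pigeonhole argument: alternate $n$ stable and unstable rays with distinct endpoints in $J$; their lifts to $\lambda$ are disjoint quasigeodesics all ending at $\sigma$, so some adjacent pair eventually comes within $K/(n-1)$ of each other, which for large $n$ contradicts the uniform lower bound on the angle between $\lambda^s$ and $\lambda^u$.

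Finally, your Step~(3) is a red herring. The interval $J$ lies in $\del_\infty\Omega_\lambda \subset \del\orb$, i.e.\ in the limit set, whereas the spans of frontier components are exactly the \emph{complementary} intervals. So frontier leaves contribute nothing to $J$, and the argument about ``infinitely many distinct frontier leaf faces'' does not engage with the actual obstruction.
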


\begin{proof}
Suppose to the contrary that such an interval $I$ exists, and let $\sigma$ be the image of $I$ under $\pi_\lambda$.

If $\phi$ is not an $\R$-covered Anosov flow, then repeated applications of \Cref{iseethearch} produce arbitrarily many spike regions with an ideal point at $\sigma$, contradicting \Cref{finitespikes}.

Otherwise, $\phi$ is an $\R$-covered Anosov flow. Note that since $M$ is compact, $\phi$ must make a definite positive angle with $\FF$ that is bounded below by some constant. This implies that at each point in $\lambda$, the stable and unstable leaves through that point make a positive angle that is bounded below by some constant. This in turn gives that there exists $\epsilon>0$ such that for any leaf $\ell$ of $\lambda^s$, the restriction of $\lambda^u$ to $N_\epsilon(\ell)$ is a standard foliation by intervals.

By \Cref{lem:omniFenley2}(ii), the leaves of the foliations $\lambda^u$ and $\lambda^s$ are uniform quasigeodesics. Hence there exists $K$ such that any two rays in $\lambda$ with the same ideal endpoint eventually lie in each other's $K$-neighborhoods.

Fix some $n\in \mathbb N$, and let $\rho_1,\rho_2,\dots, \rho_n$ be disjoint rays in $\orb$ with pairwise distinct endpoints in $I$, so that $\rho_i$ is stable for $i$ odd and unstable for $i$ even. This choice is possible because $\phi$ is skew Anosov. 
We let $r_i$ be the endpoint of $\rho_i$, and label the $\rho_i$ so that $r_{i+1}\in [r_i, r_{i+2}]$ for $1\le i\le n-2$. Let $\eta_i$ be the lift of $\rho_i$ to $\lambda$, so that $\eta_i(\infty)=\sigma$.
By the pigeonhole principle, some $\eta_i$ eventually intersects the $\frac{K}{n-1}$-neighborhood of $\eta_{i+1}$. 
In this argument, we are free to choose $n$ large enough so that $\frac{K}{n-1}<\epsilon$. Since the $\eta_i$'s are disjoint, this contradicts that the foliation of each $N_\epsilon(\eta_i)$ by leaves of the other foliation is standard.
\end{proof}

\subsection{Coherence and proximity}

Let $\lambda$ be a leaf of $\wt\FF$. We say that a deck transformation $g$ of $\wt M$ is \emph{ordering} with respect to $\lambda$ if $\lambda$ separates $g^{-1}\lambda$ and $g\lambda$ in $\wt M$. In this situation, $g$ is either \emph{positive} or \emph{negative}, depending on whether $g\lambda$ lies to the positive or negative side of $\lambda$, respectively. 
Let $\gamma$ be a flowline in $\wt M$ and let $\rm{stab}_0(\gamma)$ denote the finite index subgroup of its stabilizer that fixes the stable and unstable leaves through $\gamma$.

The non-ordering situation can be understood as follows: suppose that $g\lambda\ne\lambda$ and $g$ is not ordering for $\lambda$.  If $g\lambda$ and $g^{-1}\lambda$ lie above $\lambda$, then it follows that for any element $g^{i} \lambda\in\langle g\rangle\cdot \lambda$, all other elements lie above it. If $g\lambda$ and $g^{-1}\lambda$ lie below $\lambda$, the symmetric statement holds.

We say that $\gamma$ is \emph{coherent} with respect to $\lambda$ if some, and hence every, element $g\in \rm{stab}_0(\gamma)$ is ordering for $\lambda$ and translates $\gamma$ in the same direction as it translates $\lambda$. If $g$ is chosen to translate $\gamma$ in the forward direction, this means that $\gamma$ is coherent if and only if $g$ is ordering for $\lambda$ and $g \lambda$ lies on the positive side of $\lambda$.
We use the same terminology for periodic points in $\orb$, according to the properties of their associated orbits.

From our discussion in \Cref{sec:lozenges}, the following lemma is immediate:

\begin{lemma}\label{lem:coh}
If one corner of a lozenge is ordering for $\lambda$ then both are, and one corner is coherent with respect to $\lambda$ while the other is not. 
\end{lemma}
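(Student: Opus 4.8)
\textbf{Proof plan for Lemma \ref{lem:coh}.}

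The plan is to reduce everything to the structure of a lozenge and the basic relation between its two corner orbits recalled in \Cref{sec:lozenges}, namely that if $p,q$ are the two corners of a lozenge then they are fixed by a common group element $g$, and the two periodic orbits of $\phi$ in $M$ corresponding to $p$ and $q$ are homotopic to each other's \emph{inverses} (up to positive powers). So let $L$ be a lozenge with corner points $p$ and $q$, let $\gamma_p,\gamma_q$ be the flowlines in $\wt M$ over them, and suppose $p$ is ordering for $\lambda$. Pick $g\in\rm{stab}_0(\gamma_p)$ ordering for $\lambda$; after replacing $g$ by a power we may assume $g\in\rm{stab}_0(\gamma_q)$ as well, since $g$ fixes the lozenge and hence both corners together with the stable and unstable leaves through them. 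Thus the same element $g$ lies in $\rm{stab}_0(\gamma_p)\cap\rm{stab}_0(\gamma_q)$, which immediately gives that $q$ is ordering for $\lambda$ too (the very same $g$ witnesses it), proving the first assertion.

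For the coherence statement, the point is to track the \emph{direction} in which $g$ translates the two flowlines. First I would fix orientations: $\gamma_p$ and $\gamma_q$ inherit the flow direction of $\phi$, and $g$ translates each of them either forward or backward. The homotopy relation between the orbits $\gamma_p$ and $\gamma_q^{-1}$ (up to positive powers) means precisely that a generator $g_0$ of the common stabilizer which translates $\gamma_p$ in the forward flow direction translates $\gamma_q$ in the \emph{backward} flow direction, and vice versa — in other words, no single power of $g$ can translate both $\gamma_p$ and $\gamma_q$ forward. Now by definition $\gamma_p$ is coherent with respect to $\lambda$ iff the element of $\rm{stab}_0(\gamma_p)$ that translates $\gamma_p$ forward is a \emph{positive} ordering element for $\lambda$ (i.e. pushes $\lambda$ to its positive side), and similarly for $\gamma_q$. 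Since the element translating $\gamma_p$ forward is, up to a positive power, the \emph{inverse} of the element translating $\gamma_q$ forward, and since inverting a positive ordering element for $\lambda$ yields a negative ordering element (and conversely), exactly one of $\gamma_p,\gamma_q$ is coherent with respect to $\lambda$.

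I expect the main obstacle to be bookkeeping the sign conventions cleanly: one must make sure that "ordering for $\lambda$'' is a property of the \emph{subgroup} (so it transfers verbatim from $p$ to $q$), while "positive/negative'' and "coherent'' are properties that flip under passing to an inverse element, and then check that the flip in the flow direction (from the orbit-inverse relation) and the flip in the side of $\lambda$ (from passing to the inverse ordering element) are the \emph{same} flip, so that they combine to give: $\gamma_p$ coherent $\iff$ $\gamma_q$ not coherent. Once the conventions are pinned down this is a one-line check; the substantive input — that the two corner orbits of a lozenge are mutual inverses up to positive powers — is already recorded in \Cref{sec:lozenges}, so no new geometry is needed. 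I would state the argument in exactly the three steps above: (1) transfer of the ordering property via the common stabilizer element fixing the lozenge; (2) the orbit-inverse relation forces the forward-translating generators of $\gamma_p$ and $\gamma_q$ to be mutual inverses up to positive powers; (3) inverting an ordering element swaps positive and negative, hence swaps coherent and non-coherent, giving the claim.
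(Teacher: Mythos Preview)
Your proposal is correct and is precisely the argument the paper has in mind: the paper says only that the lemma is ``immediate'' from the discussion in \Cref{sec:lozenges}, and you have spelled out exactly that discussion---the common stabilizer element fixes both corners, and the fact that the corner orbits are homotopic to each other's inverses forces the forward-translating generators to be mutual inverses, swapping coherent and non-coherent. There is no difference in approach, only in level of detail.
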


If $p$ is in $\Omega_\lambda$ and fixed by $g$, then $g$ is coherent with respect to $\lambda$ since the corresponding orbit $\gamma_p$ intersects $\lambda$. We also have:

\begin{lemma}
If $p$ is a point of $\fr \Omega_\lambda$ and $g\in \stab_0(p)$, then either $g$ is ordering with respect to $\lambda$ or $g \lambda=\lambda$. 
\end{lemma}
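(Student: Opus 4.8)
The plan is to argue by contradiction: suppose $g \in \stab_0(p)$, that $g$ is \emph{not} ordering with respect to $\lambda$, and that $g\lambda \neq \lambda$. Since $p \in \fr\Omega_\lambda$ lies in a leaf slice $\ell$ of the frontier (say a stable leaf slice, regular to the side containing $\Omega_\lambda$, by Fenley's description recalled before \Cref{itsadisk}), the element $g$ fixes the stable and unstable leaves through $p$ and their four ideal endpoints in $\del\orb$. The key geometric input is that the half-leaves of the frontier foliation through $p$ bound $\Omega_\lambda$ locally, so $\Omega_\lambda$ sits ``to one side'' of the invariant cross of leaves through $p$, while the orbit corresponding to $p$ does not meet $\lambda$ (it only limits onto $\del\ol\Omega_\lambda$ from the boundary).

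First I would recall from the non-ordering analysis in the excerpt that if $g\lambda \neq \lambda$ and $g$ is not ordering for $\lambda$, then $\langle g\rangle \cdot \lambda$ all lies strictly on one side of $\lambda$ — say all of $g^i\lambda$, $i \neq 0$, lie on the positive side (the negative case is symmetric). The strategy is to derive a contradiction with the fact that $p$ (equivalently the stable and unstable leaves through $p$, which are $g$-invariant) genuinely touches $\ol\Omega_\lambda$ along its frontier. Concretely: the shadows $\Omega_{g^i\lambda} = g^i\Omega_\lambda$ are the projections of leaves all lying above $\lambda$, so as $i \to +\infty$ these translated leaves march off in the positive direction and their shadows $g^i\Omega_\lambda$ must exhaust or escape. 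But $p$ is $g$-fixed, so $p \in \fr(g^i\Omega_\lambda)$ for every $i$; thus every $g^i\Omega_\lambda$ has $p$ on its frontier, with the stable leaf $L^s(p)$ through $p$ (which is $g$-invariant) as the relevant frontier leaf slice. Since each $g^i\lambda$ lies strictly above $\lambda$ and these are distinct, the shadows $\{g^i\Omega_\lambda\}_{i \geq 0}$ form a sequence of regions all adjacent to the fixed leaf $L^s(p)$ along the same side; one checks (using that a transversal from $\lambda$ to $g^i\lambda$ on the positive side must cross every $g^j\lambda$ in between, by the nesting of one-sided orbits of $\langle g\rangle$) that there is a transversal in $\wt M$ realizing this, forcing $\lambda$ and $g\lambda$ to actually be connected through a chain pinned at the orbit $\gamma_p$. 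That, however, is exactly the coherence situation: the orbit $\gamma_p$ would then have to intersect the leaves it is coherent with, or else $g$ would be ordering — contradiction.

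I expect the main obstacle to be making the ``one-sided nesting forces a transversal'' step precise. The cleanest way is probably to work entirely in $\ol\orb$ rather than $\wt M$: recast non-ordering as a statement about how the closed disks $\ol\Omega_{g^i\lambda} = g^i\ol\Omega_\lambda$ are nested in $\ol\orb$. If $g$ is not ordering and $g\lambda \neq \lambda$, the disks $g^i\ol\Omega_\lambda$ ($i \in \Z$) are all on one side of $\ol\Omega_\lambda$ in a suitable sense, yet they all contain the $g$-fixed leaf slice $L^s(p)$ on their frontier; an infinite nested (or infinite ``fanned'') family of disks all having a common $g$-invariant boundary leaf, with $g$ acting as a translation, is incompatible with $L^s(p)$ being a \emph{proper} leaf slice regular to the shadow side — essentially because $g$ would be forced to translate along $L^s(p)$ while fixing its endpoints, and the shadows would have to accumulate, violating properness of frontier leaf slices (the frontier is a \emph{disjoint} union of properly embedded lines, per Fenley). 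This reduces the claim to: a $g$-invariant proper leaf slice on the frontier of $\Omega_\lambda$ must come from an orbit $\gamma_p$ that is coherent (hence $g$ ordering) unless $g$ preserves $\lambda$ itself. I would fill in this last reduction using \Cref{lem:coh} together with the lozenge/chain-of-lozenges structure from \Cref{sec:lozenges}, observing that if $\gamma_p$ is not coherent then $g$ being ordering would still follow from the chain of lozenges connecting $p$ to a coherent corner — so the only escape is $g\lambda = \lambda$.
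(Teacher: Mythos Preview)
Your setup is correct: assuming $g\lambda\ne\lambda$ and $g$ not ordering, the shadows $g^i\Omega_\lambda$ are pairwise disjoint (since the $g^i\lambda$ are mutually incomparable in $\LL$), and each has $p$ on its frontier on the \emph{same side} of the stable leaf through $p$ (because $g\in\stab_0(p)$ preserves the half-leaves at $p$). But you stop just short of the contradiction and then veer into arguments that do not close.

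The missing step is immediate once stated: the frontier component of $\Omega_\lambda$ through $p$ is regular to the $\Omega_\lambda$ side, so $\Omega_\lambda$ contains an entire half-neighborhood of $p$; the same is true of $\Omega_{g\lambda}$, on the same side. Two disjoint open sets cannot both contain a half-neighborhood of the same point. The paper makes this concrete by choosing the unstable half-leaf $\ell^u$ from $p$ into $\Omega_\lambda$, picking $a\in\ell^u\cap\Omega_\lambda$, and observing that (with $g$ chosen to translate $\gamma$ forward) $g$ contracts $\ell^u$ toward $p$, so $ga$ lies strictly between $p$ and $a$ on $\ell^u$; but $ga\in\Omega_{g\lambda}$ and $\Omega_{g\lambda}\cap\Omega_\lambda=\varnothing$, contradicting that this initial segment of $\ell^u$ lies in $\Omega_\lambda$.

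By contrast, your proposed routes do not work. The transversal argument is circular: in the non-ordering situation the leaves $g^i\lambda$ are pairwise incomparable, so no flowline (hence no positive transversal) joins any two of them---there is no ``nesting'' to invoke. The $\ol\orb$ argument misdescribes the dynamics: $g$ fixes $p$ and the ideal endpoints of the leaves through $p$, so it does not ``translate along $L^s(p)$''; and the disjointness of frontier leaf slices is a statement about the frontier of a \emph{single} shadow, not about frontiers of distinct $g^i\Omega_\lambda$, which may well coincide in $L^s(p)$. Finally, appealing to \Cref{lem:coh} is circular: coherence is only defined once $g$ is ordering, which is exactly what you are trying to establish, and there is no a priori ``coherent corner'' available in a chain of lozenges from $p$.
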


\begin{proof}
Suppose that $g \lambda\ne\lambda$ and $g$ is not ordering with respect to $\lambda$. Let $\gamma$ be the flowline of $\wt\phi$ projecting to $p$. Up to replacing $g$ by $g^{-1}$, we can assume that $g$ translates $\gamma$ in the forward direction.

Assume without loss of generality that the component of $\fr\Omega_\lambda$ containing $p$ is stable. Up to replacing $p$ by another periodic point in the same component of $\fr \Omega_\lambda$, we can assume there is a half-leaf $\ell^u$ of $\orb^u$ emanating from $p$ into $\Omega_\lambda$.

Let $L^u$ be the half-leaf of $\wt W^u$ projecting to $\ell^u$.
Let $\alpha$ be a flowline in $L^u$ that passes through $\lambda$, and let $a$ be its projection to $\orb$. Then $a$ and $ga$ lie in $\ell^u$, and $ga$ is closer to $p$ than $a$ is. Since $g$ is not ordering for $\lambda$, $g\alpha$ does not intersect $\lambda$, so $ga\notin\Omega_\lambda$, a contradiction.
\end{proof}

If $\ell$ is a periodic leaf slice in $\orb$, then we say that $\ell$ is coherent with respect to $\lambda$ when its periodic points are coherent (there will be more than one only if $\ell$ contains one or more blown segments). 

\begin{lemma}
\label{lem:coherent}
Let $p\in \orb-\Omega_\lambda$. Let $g \in \stab_0(p)$ and suppose that $g$ is ordering for $\lambda$. 
Let $\gamma$ be the flowline lying over $p$ in $\wt M$, and suppose that a half-leaf $H$ of $\wt W^u$ or $\wt W^s$ containing $\gamma$ intersects $\lambda$. 
Denote the intersection $H\cap \lambda$ by $\ell$. 
\begin{enumerate}[label=(\roman*)]
\item \label{item:coherent} If $p$ is coherent for $\lambda$, and $\ell$ is oriented so that its projection $\Theta(\ell)$ is oriented toward $p$, then $\ell$ eventually lies a bounded distance from $\gamma$.

\item \label{item:notcoherent}If $p$ is not coherent for $\lambda$, and $\ell$ is oriented so that its projection $\Theta(\ell)$ is oriented \emph{away from} $p$, then $\ell$ eventually lies a bounded distance from $\gamma$.
\end{enumerate}
\end{lemma}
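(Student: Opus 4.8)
\textbf{Proof proposal for Lemma~\ref{lem:coherent}.}

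The plan is to reduce both parts to a single statement about the geometry of flowlines inside the half-leaf $H$, which is a two-dimensional leaf of $\wt W^u$ or $\wt W^s$, foliated by flowlines of $\wt\phi$. Since $g$ is ordering for $\lambda$ and fixes the leaf $H$ (it fixes the stable and unstable leaves through $\gamma$, hence fixes $H$), the flowline $\gamma$ is a $g$-invariant boundary flowline of $H$, and the translates $g^n\lambda \cap H$ are a $\langle g\rangle$-invariant sequence of arcs marching monotonically across $H$; the key consequence of ``ordering'' is precisely that $\lambda$ separates $g^{-1}\lambda$ from $g\lambda$ in $\wt M$, hence $\ell = H\cap\lambda$ separates $g^{-1}\ell$ from $g\ell$ in $H$. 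First I would set up coordinates on $H\cong \R^2$ in which the flowlines are vertical lines, $\gamma$ is (say) the line $x=0$, and $g$ acts by a translation in the flow (vertical) direction composed with a homeomorphism moving the $x$-coordinate monotonically toward $0$ in backward flow time --- this is the contraction coming from $\gamma$ being periodic and $H$ being a stable or unstable half-leaf (orbits in the same $W^{s}$-leaf converge in forward time, in the same $W^u$-leaf in backward time). The orientation hypothesis on $\ell$ --- that its projection $\Theta(\ell)$ points toward $p$ in the coherent case, away from $p$ in the non-coherent case --- is exactly the condition needed so that, under iteration by $g$ (in the direction that translates $\gamma$ forward, resp.\ the direction provided by \Cref{lem:coh} in the non-coherent case), the arc $\ell$ is pushed toward $\gamma$ rather than away from it; \Cref{lem:coh} is what lets me match up ``coherent vs.\ not'' with ``which end of $\ell$ we run toward'' via the two corners of a lozenge.

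The main steps, in order: (1) record that $g\in\stab_0(p)$ fixes $H$ and hence $\gamma$, and that being ordering for $\lambda$ forces the translates $\{g^n\ell\}$ to be properly nested arcs sweeping across $H$ with $\gamma$ in their closure; (2) choose the flow-forward generator (in part \ref{item:coherent}) or the generator dictated by coherence/\Cref{lem:coh} (in part \ref{item:notcoherent}), and observe that in either case the chosen end of $\ell$ --- the one determined by the stated orientation of $\Theta(\ell)$ --- is carried by large powers of $g$ into a small neighborhood of $\gamma$; (3) use the contraction of $\wt\phi$ along $H$ (uniform on compact transversals, by compactness of $M$) to upgrade ``the $g$-translates of the endpoint approach $\gamma$'' to ``a subray of $\ell$ itself stays within a bounded $\lambda$-distance of $\gamma$'', where the bound comes from a single fundamental domain for $\langle g\rangle$ acting on $H$; (4) finally translate the distance bound measured in $H$ (or in $M$) back to a bound in the leafwise hyperbolic metric $d_\lambda$, using that $\lambda\to\Omega_\lambda$ is a homeomorphism and that $M$ is compact so the ambient and leafwise metrics are comparable on bounded pieces. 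I would treat the coherent and non-coherent cases in parallel, since after the right choice of generator and orientation they become the same argument; \Cref{lem:coh} and the discussion of lozenges in \Cref{sec:lozenges} supply the dictionary between them.

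The hard part will be Step (3): making precise, and uniform, the sense in which ``a half-leaf of $\wt W^{s/u}$ through a periodic orbit converges to that orbit'' --- i.e.\ getting an honest bounded-distance conclusion for the ray $\ell$ rather than just for a sequence of points on it, and ensuring the bound does not secretly depend on $\lambda$ or on $p$. This is where I expect to lean on the periodicity of $\gamma$ (so that $H$ has a compact quotient strip under $\langle g\rangle$, on which the contraction rate is bounded below) together with the transversality of $\FF$ to $\phi$ (so that $\lambda$ meets each flowline of $H$ essentially once, transversally, with angle bounded away from zero by compactness of $M$). A secondary subtlety is orientation bookkeeping: one must verify that ``ordering'' plus the stated orientation of $\Theta(\ell)$ really does put $g\ell$ on the side of $\ell$ closer to $\gamma$ and not farther --- in the non-coherent case this is where the sign flip between the two lozenge corners in \Cref{lem:coh} is essential, and I would check it carefully against the description of the non-ordering/ordering dichotomy given just before \Cref{lem:coh}.
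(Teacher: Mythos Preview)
Your proposal works in the right place (the half-leaf $H$, using the $\langle g\rangle$-action) and correctly identifies step~(3) as the crux, but two points deserve correction. First, the appeal to \Cref{lem:coh} is a red herring: that lemma concerns the two corners of a lozenge, and $p$ need not be a corner of any lozenge. The paper does not use it here. The ``sign flip'' between cases (i) and (ii) is much simpler than you suggest: if $p$ is not coherent, then (with $g$ chosen to translate $\gamma$ forward) it is $g^{-1}\lambda$ rather than $g\lambda$ that lies above $\lambda$, so one runs the same argument with $g^{-1}$ in place of $g$. No lozenge dictionary is needed.

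Second, your mechanism in steps (2)--(3) --- tracking the nested translates $g^n\ell$ and then invoking contraction --- is vaguer than what the paper actually does, and your phrasing ``the chosen end of $\ell$ is carried by large powers of $g$ into a small neighborhood of $\gamma$'' conflates the action of $g$ on $\ell$ with the intrinsic behavior of $\ell$ along its end. The paper's implementation is more direct: pick a point $q\in\ell$, build a compact transversal $\sigma_0$ in $M$ from $q_M$ to $\gamma_M$, and lift to $\sigma\subset H$. In the coherent case, since $g\lambda$ lies above $\lambda$, the backward flowline from $gq$ must hit $\lambda$, forcing $\ell$ to cross $g\sigma$ at a point strictly closer to $\gamma$ than $gq$; iterating, $\ell$ crosses every $g^n\sigma$. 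Downstairs, $\ell_M$ therefore crosses the compact arc $\sigma_0$ infinitely often, accumulating toward $\gamma_M$ --- i.e.\ $\ell_M$ spirals onto a closed curve parallel to $\gamma_M$. That spiraling is exactly the bounded-distance conclusion, and the orientation of $\Theta(\ell)$ compatible with the spiraling direction is the one pointing toward $p$. The non-coherent case is identical with $g^{-1}$ in place of $g$, and the spiraling direction reverses. Your ``fundamental domain for $\langle g\rangle$ on $H$'' is morally the region between $\sigma$ and $g\sigma$, so your step~(3) would recover this argument once made precise; but your step~(4) is unnecessary, since the bounded distance is in the ambient metric on $\wt M$, not the leafwise metric on $\lambda$.
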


\begin{proof}
In this proof we continue to use a subscript $M$ to denote the projection of an object to $M$.

We assume that the orbits in $H$ are asymptotic to $\gamma$ in the backward direction. Let $q$ be a point in $\ell=H\cap \lambda$.
Let $\sigma_0$ be an embedded path in $W^u(\gamma_M)$ transverse to $\phi$ in $W^u(\gamma_M)$ and connecting $q_M$ to $\gamma_M$. Let $\sigma$ be the lift of $\sigma_0$ to $\wt M$ that starts at $q$. 

First suppose that $p$ is coherent with respect to $\lambda$. Then $g\lambda$ lies above $\lambda$, and the backward flowline from $gq$ must intersect $\lambda$. This forces $\ell$ to intersect $g\sigma$ at a point closer to $\gamma$ than $gq$. 
See the lefthand side of \Cref{fig:proximity}.
In $M$, this forces $\ell_M=\lambda_M\cap H_M$ to spiral on a closed curve parallel to $\gamma$. If we orient $\ell$ compatibly with the direction of this forced spiraling, its projection to $\orb$ will be oriented toward $p$. This completes the proof in this case.

\begin{figure}
\begin{center}
\includegraphics[height=1.75in]{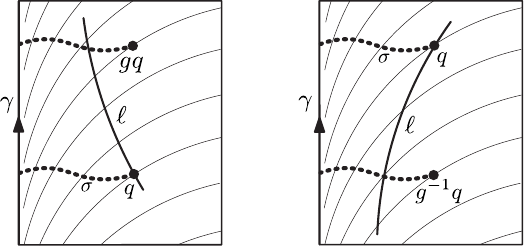}
\caption{Possible pictures in the half leaf $H$ from the proof of \Cref{lem:coherent}. Left: the case where $\gamma$ is coherent with respect to $\lambda$. Right: the case where $\gamma$ is not coherent with respect to $\lambda$.}
\label{fig:proximity}
\end{center}
\end{figure}

Now suppose that $p$ is not coherent with respect to $\lambda$. Then the backward flowline from $g^{-1}q$ intersects $\lambda$, which forces $\ell$ to intersect $g^{-1}\sigma$
at a point closer to $\gamma$ than $g^{-1}q$. See the righthand side of \Cref{fig:proximity}. In $M$, this again forces $\ell_M$ to spiral on a closed curve. In contrast with the previous case, the direction of the spiraling is such that if we orient $\ell$ accordingly, its projection to $\orb$ will be oriented away from $p$.

The case where orbits in $H$ are asymptotic to $\gamma$ in the forward direction is similar, although $\ell$ will eventually fellow travel $\gamma$ in the opposite direction (in $\wt M$) in both the coherent and not coherent cases.
\end{proof}

\subsection{Endgame: gaps are spanned by finite chains}

The \emph{span} of a frontier component $\ell$ of $\Omega_\lambda$ is the component of $\del\orb-\del \ell$ not containing any ideal points of $\Omega_\lambda$, where $\del \ell$ is the set of $\ell$'s ideal endpoints.

A \emph{frontier chain} for the shadow $\Omega_\lambda$ of a leaf $\lambda$ of $\wt \FF$ is a sequence $\ell_1,\ell_2\dots, \ell_n$ of components of $\fr(\Omega_\lambda)$ such that any two consecutive components share an ideal point. The \emph{span} of a the chain $\{\ell_1,\dots,\ell_n\}$ is the smallest open interval containing the spans of all $\ell_i$. Alternatively, this is the interior of the closed interval obtained by taking the union of the closures of each individual span. 
The \emph{length} of a frontier chain is its number of components.

Finally, a frontier chain is a \emph{limit chain} if there is a sequence of leaves in $\orb$ intersecting $\Omega_\lambda$ limiting on the entirety of the chain and nothing else. The importance of limit chains lies in the following observation: 
if the closure of the span of a limit chain is contained in a gap of $\pi_\lambda$, then leaves in $\Omega_\lambda$ limiting to the chain eventually determine leaves in $\lambda$ with the same endpoints in $\partial \lambda$, and hence a spike region in $\lambda$ by \Cref{lem:spike}. 

\begin{lemma}
\label{lem:bounded_chain}
Any frontier chain has finite length. Moreover, the bound is independent of $\lambda$. 
\end{lemma}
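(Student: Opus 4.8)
The plan is to argue by contradiction: suppose there are frontier chains $\ell_1,\dots,\ell_n$ for $\Omega_\lambda$ of arbitrarily large length, possibly with $\lambda$ varying. Each consecutive pair $\ell_i,\ell_{i+1}$ shares an ideal point $p_i\in\del\orb$, and each $\ell_i$ is a leaf slice of $\orb^{s}$ or $\orb^{u}$ which is regular to the side containing $\Omega_\lambda$. The first observation I would make is that a shared ideal point $p_i$ forces $\ell_i$ and $\ell_{i+1}$ to be joined by a finite sequence of perfect fits (\Cref{sec:flowspace}), and that consecutive frontier components cannot lie in the same leaf (otherwise $\Omega_\lambda$ would have ideal points strictly between them, contradicting the definition of a frontier chain and the fact that each frontier leaf is regular toward $\Omega_\lambda$). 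So a long frontier chain produces a long sequence of distinct leaves of $\orb^{s/u}$ chained by perfect fits, all bordering the single region $\Omega_\lambda$.

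Next I would bring in periodicity. The key structural input is that a frontier component $\ell_i$, being regular to the side of $\Omega_\lambda$ and sharing ideal points with its neighbors, must contain a periodic leaf face: two foliation rays in periodic leaves making a perfect fit lie in leaves fixed by a common group element, hence connected by a chain of lozenges (the last paragraph of \Cref{sec:lozenges}). More carefully, the shared ideal endpoint $p_i$ is an ideal point of both $\ell_i$ and $\ell_{i+1}$; because the leaves on either side bound $\Omega_\lambda$ and $\Omega_\lambda$ accumulates on no interval inside the span (density of ray endpoints off blowup segments, \Cref{obs:shadow}, together with \Cref{lem:nointervalcollapse}), I expect to be able to locate periodic points, hence corner points of lozenges, controlling the configuration near each $p_i$. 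The upshot I am aiming for: the union $\bigcup \ell_i$ determines a chain of lozenges of length comparable to $n$, all of whose leaves separate or border $\Omega_\lambda$ in a consistent (coherent) way with respect to $\lambda$, by \Cref{lem:coh} and the coherence lemmas of \Cref{sec:lozenges}.

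Now I would invoke the finiteness results to get the contradiction, splitting along the flow-space trichotomy. In a long chain of lozenges, \Cref{zplusz} (via the argument in the proof of \Cref{sink}) shows that all but a bounded initial segment of lozenges meet only at their corners; so a long frontier chain yields a long sequence of corner-adjacent lozenges, i.e. a long chain of periodic points $x_1,\dots,x_m$ all fixed by a common $g\in\pi_1(M)$, with $g$ ordering and coherent for $\lambda$. But then the associated periodic orbits in $M$ are all homotopic to powers of one another's inverses, forcing a $\Z\oplus\Z$ — contradicting atoroidality; and the uniformity in \Cref{zplusz} (uniform bound on the number of branching/blowup leaves fixed by any single $g$) gives the $\lambda$-independent bound. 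For the remaining possibility that the relevant leaves are nonbranching, I would instead use \Cref{finitespikes}: a long limit chain whose span lies in a gap would, by the observation preceding the lemma statement, force many distinct spike regions of $\lambda$ sharing an ideal point, again contradicting a $\lambda$-independent finite bound; and a long chain that is not a limit chain can be subdivided or thinned so that a sub-chain becomes a limit chain after passing to a subsequence of approximating leaves.

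The main obstacle I anticipate is the middle step: cleanly extracting a genuine \emph{chain of lozenges} (and hence a single group element fixing many periodic points) from an arbitrary long frontier chain, when the frontier components need not themselves be periodic and the shared ideal points need not be endpoints of periodic rays. I expect this to require a careful use of \Cref{obs:shadow} and \Cref{lem:nointervalcollapse} to rule out ``wasted'' intervals, followed by the perfect-fit-to-lozenge dictionary from \Cref{sec:lozenges} and the branching-leaf finiteness of \Cref{zplusz}; handling the interplay with blowup segments (where ray endpoints are not dense) will need extra care, likely by passing to the associated genuine pseudo-Anosov flow as in \Cref{monotonemaps}. Everything else — the trichotomy split, the atoroidality contradiction, and the reduction to spike regions in the nonbranching case — should then follow from the lemmas already in hand.
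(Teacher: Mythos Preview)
Your proposal has the right endgame tools (\Cref{zplusz} and \Cref{finitespikes}) but misses the simple dichotomy that makes the argument work, and the ``main obstacle'' you flag is genuine in your framing: you give no mechanism to extract periodicity (and hence a common $g$) from an arbitrary long frontier chain.

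The paper's proof bypasses this entirely. Given an infinite frontier chain, either infinitely many components lie in \emph{branching} leaves or leaves meeting blowup segments, or infinitely many do not. In the first case, Fenley's result (\Cref{sec:branching}) says branching leaves are automatically periodic; since consecutive chain components share ideal points, the perfect-fit propagation from \Cref{sec:lozenges} then forces all of them to be fixed by a common $g$, contradicting \Cref{zplusz} directly. In the second case, a nonbranching, non-blowup frontier component is automatically accumulated from the $\Omega_\lambda$ side (precisely because it is a Hausdorff point in the leaf space), so each such component is already a limit chain of length one; since the entire chain is collapsed by $\pi_\lambda$ to a single point of $\partial\lambda$ (via \Cref{frontiercollapse} and the shared ideal points), this yields infinitely many spike regions at a common ideal point, contradicting \Cref{finitespikes}. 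No lozenge-chain extraction, no coherence analysis, and no flow-space trichotomy is needed.

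Your attempt to ``subdivide or thin'' a non-limit chain into a limit chain is the wrong direction: a single frontier component fails to be a limit chain only when it is branching or meets a blowup segment, which is exactly the case handled by \Cref{zplusz}. Similarly, your appeal to \Cref{lem:nointervalcollapse} is misplaced here, since a frontier chain need not bound any open arc of $\partial\orb$ inside $\overline{\Omega_\lambda}$. The uniformity in $\lambda$ then comes for free from the uniform bounds in \Cref{zplusz} and \Cref{finitespikes}.
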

\begin{proof}
Consider a hypothetical frontier chain of infinite length.

Suppose there are infinitely many components of the chain contained in branching leaves or leaves containing blowup segments. Since they lie in the same chain, all these components are fixed by some common $g\in \pi_1(M)$, contradicting \Cref{zplusz}.

Otherwise infinitely many frontier components in the chain are not contained in branching leaves and do not meet blowup segments. Then each of these components is accumulated upon by leaves from the $\Omega_\lambda$ side, i.e. by leaves that intersect $\Omega_\lambda$. In particular, each component is itself a limit subchain.
This gives rise to infinitely many spike regions in $\lambda$ sharing an ideal point, contradicting \Cref{finitespikes}.

Independence of the bound from $\lambda$ follows from the uniformity statements in \Cref{zplusz} and \Cref{finitespikes}.
\end{proof}

The next lemma is our primary tool in the absence of limit subchains:

\begin{lemma}
\label{findgoodray}
Consider a frontier chain $C$ of $\Omega_\lambda$. 

Suppose that $C$ has no subchain which is a limit chain. Then there exists a foliation ray $r$ in $\Omega_\lambda$, terminating at a point in $C$ or an ideal point of $C$, such that its lift $\wt r$ to $\lambda$ stays bounded distance from any orbit corresponding to a periodic point in $C$.
\end{lemma}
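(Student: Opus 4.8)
\textbf{Proof proposal for \Cref{findgoodray}.}

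The plan is to argue by contradiction and use the hypothesis that no subchain of $C$ is a limit chain to force a ray in the frontier itself (not merely a ray accumulated from the $\Omega_\lambda$ side) to behave well. Since $C=\{\ell_1,\dots,\ell_n\}$ is a frontier chain, each $\ell_i$ is a leaf slice in the frontier of $\Omega_\lambda$, regular to the side containing $\Omega_\lambda$, and by \Cref{lem:bounded_chain} the chain is finite. A frontier component $\ell_i$ that is \emph{not} a limit subchain is one that is \emph{not} accumulated by leaves intersecting $\Omega_\lambda$ from the $\Omega_\lambda$ side along all of itself; combined with the fact that leaves close to a leaf face of a nonbranching leaf on its regular side have nearby endpoints (the consequence of \cite[Proposition 3.1]{BFM} recorded at the end of \Cref{sec:branching}), this should force each such $\ell_i$ to lie in a branching leaf or in a leaf containing a blowup segment, hence (by the discussion in \Cref{sec:branching} and \Cref{sec:lozenges}) to be periodic and to contain a corner point of a lozenge. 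First I would make this reduction precise: either some frontier component of $C$ is a limit subchain (excluded by hypothesis, since a single frontier component accumulated by $\Omega_\lambda$-leaves along all of itself is a limit chain of length one), or every $\ell_i$ is periodic.

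Next I would produce the ray. Pick a periodic point $p$ in one of the $\ell_i$, with stabilizer generated (up to power) by $g\in\stab_0(p)$. Since $p\in\fr\Omega_\lambda\subset\orb-\Omega_\lambda$ and $p$ lies on a leaf slice that is regular to the $\Omega_\lambda$ side, there is a half-leaf $H$ of the \emph{transverse} foliation through the flowline $\gamma$ over $p$ that emanates from $\ell_i$ into $\Omega_\lambda$ and therefore crosses $\lambda$; write $\ell=H\cap\lambda$. Now apply \Cref{lem:coherent}: by the preceding lemmas ($\gamma\in\fr\Omega_\lambda$ means $g$ is ordering for $\lambda$ or fixes $\lambda$, and the latter is excluded here since $g$ moves the stable/unstable leaves of other corners), $g$ is ordering, and depending on whether $p$ is coherent or not with respect to $\lambda$ we orient $\ell$ toward or away from $p$ so that \Cref{lem:coherent}\ref{item:coherent} or \ref{item:notcoherent} applies, giving that $\ell$ eventually lies a bounded distance from $\gamma$. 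Taking $r=\Theta(H)$ (a foliation ray in $\orb$ terminating at $p\in C$) and $\wt r=\ell$ the lift of $r$ to $\lambda$, we obtain exactly the desired ray: it stays a bounded distance from the orbit $\gamma$. To handle \emph{every} periodic point of $C$ simultaneously, note that all the $\ell_i$ are joined in a chain and hence — being periodic and sharing ideal points, so their containing leaves make perfect fits and are connected by lozenges (see \Cref{sec:lozenges}) — all their corner points are fixed by a common power of $g$; so coherence of one with respect to $\lambda$ determines the pattern for all via \Cref{lem:coh}, and a single ray $r$ chosen as above will fellow-travel each of the finitely many orbits corresponding to periodic points of $C$ (using that bounded-distance fellow-traveling of one corner of a lozenge in $\lambda$ propagates to the other, again by \Cref{lem:coherent} applied along the finite lozenge chain).

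The main obstacle I anticipate is the first step: cleanly extracting, from ``$C$ has no limit subchain,'' the conclusion that every frontier component of $C$ is periodic, and in particular ruling out a frontier component that is a nonbranching, non-blowup leaf slice yet somehow fails to be accumulated from the $\Omega_\lambda$ side. One must use that $\fr\Omega_\lambda$ is a disjoint union of leaf slices each regular to the $\Omega_\lambda$ side (Fenley), so that on the regular side of such a slice there genuinely are leaves approaching it — and then argue that these leaves must intersect $\Omega_\lambda$ rather than peeling off, which is where the local-homeomorphism/pasting structure of $\ol\Omega_\lambda$ from \Cref{itsadisk} and the leaf-face approximation fact enter. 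A secondary subtlety is ensuring the single ray $r$ can be taken to terminate ``at a point in $C$ or an ideal point of $C$'' uniformly; if the natural half-leaf through a chosen periodic point does not meet $\lambda$ for every periodic point of $C$, one instead selects the periodic point and half-leaf where it does (such a choice exists because $\ell_i$ is regular to the $\Omega_\lambda$ side) and then transports the bounded-distance conclusion to the other orbits through the lozenge chain as above.
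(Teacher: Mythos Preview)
Your reduction step and your treatment of the coherent case are essentially what the paper does: once every component of $C$ is seen to be periodic (branching or containing a blowup segment) and all are fixed by a common $g$, one picks a periodic point $p\in C$, takes the transverse half-leaf $H$ through $\gamma$ into $\Omega_\lambda$, and applies \Cref{lem:coherent}\ref{item:coherent} to the end of $H\cap\lambda$ projecting toward $p$. That end fellow-travels $\gamma$, and its projection is a foliation ray in $\Omega_\lambda$ terminating at $p\in C$.

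The gap is in the incoherent case. You invoke \Cref{lem:coherent}\ref{item:notcoherent}, but that item says the end of $\ell=H\cap\lambda$ that stays a bounded distance from $\gamma$ is the one whose projection is oriented \emph{away} from $p$. This is the opposite end from the one limiting on $p$, and there is no reason its projection terminates at any point of $C$ or any ideal point of $C$; it simply runs off through $\Omega_\lambda$ to some unrelated boundary point. So your chosen ray ``$r=\Theta(H)$ terminating at $p\in C$'' does \emph{not} have a lift that eventually fellow-travels $\gamma$ when $p$ is incoherent, and the argument breaks.

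The paper's proof avoids this by treating the incoherent case with a genuinely different ray. First, from \Cref{lem:coh} one deduces that if all periodic points of $C$ are incoherent then the components of $C$ are all stable or all unstable (say stable). Then, using the hypothesis that no subchain is a limit chain, one finds a \emph{stable} ray $r\subset\Omega_\lambda$ terminating at an ideal point of $C$ (or lying in a blowup segment terminating at a periodic point of $C$). For the half-leaf of $\orb^s$ containing $r$, the relevant periodic points are again incoherent, and now ``oriented away from those periodic points'' coincides with ``oriented toward the ideal point of $C$'', so \Cref{lem:coherent}\ref{item:notcoherent} applied there gives exactly a ray of the required kind. Your proposal does not contain this idea, and your anticipated ``secondary subtlety'' concerns a different issue (whether $H$ meets $\lambda$) rather than the orientation mismatch that actually causes the failure.
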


\begin{proof}
Under the hypotheses of the lemma, $C$ must contain branching leaves or meet a blown segment intersecting $\Omega_\lambda$. Hence all elements in $C$ are fixed by a common $g\in\pi_1(M)$.

By \Cref{lem:coherent} \cref{item:coherent}, it suffices to consider the case when all periodic points in $C$ are incoherent with respect to $\lambda$. It follows from \Cref{lem:coh} that the components of $C$ are either all stable or all unstable. For concreteness we consider the case when all are stable.

Since no subchains are limit chains, we can find a stable ray $r$ in $\Omega_\lambda$ either contained in a blowup segment and terminating at a periodic point in $C$, or terminating at an ideal point of $C$. Let $\ell$ be the half leaf of $\orb^s$ containing $r$. Every periodic point of $\ell$ is incoherent with respect to $\lambda$ (see \Cref{lem:coh}). Hence by \Cref{lem:coherent} \cref{item:notcoherent}, the lift of $r$ to $\lambda$ stays a bounded distance from the flowlines corresponding to periodic points in $\ell$. 

Since these are connected to the periodic points in $C$ by chains of lozenges, these flowlines project to orbits in $M$ which are homotopic or antihomotopic. The lemma follows.
\end{proof}

The following is a restatement of \Cref{th:gaps}. 
\begin{theorem}\label{thm:gaps}
Every gap of $\pi_\lambda$ is spanned by a finite chain of frontier leaves. 
\end{theorem}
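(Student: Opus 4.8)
The plan is to show that any gap $G$ of $\pi_\lambda$ is the span of some finite frontier chain, by combining the ``no interval collapses'' lemma (\Cref{lem:nointervalcollapse}) with the finiteness of spike regions (\Cref{finitespikes}) and the structure lemmas about coherence (\Cref{lem:coherent}, \Cref{findgoodray}). Let $G = (a,b)$ be a gap of $\pi_\lambda$, so $\pi_\lambda$ collapses $G$ to a single point $\sigma \in \del\lambda$. Since $\pi_\lambda$ factors through $\del\ol\Omega_\lambda$ by collapsing the closure of each frontier component, the endpoints $a$ and $b$ are images of points of $\del\ol\Omega_\lambda$; by \Cref{lem:nointervalcollapse} the gap $G$ cannot lie inside $\ol\Omega_\lambda\cap\del\orb$, so $G$ must meet the complement of $\partial_\infty\Omega_\lambda$, i.e. $G$ contains the span of at least one frontier component of $\Omega_\lambda$.

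First I would identify the frontier components of $\Omega_\lambda$ whose spans lie in $G$ and show they assemble into a single frontier chain $C$ whose span is all of $G$. The point is that $G \cap \partial_\infty\Omega_\lambda$ is a closed subset of the interval $G$ whose complement in $G$ is the disjoint union of the spans of frontier components lying in $G$; if $\partial_\infty\Omega_\lambda \cap G$ were nonempty and not a single point, one could (using \Cref{iseethearch} in the non-$\R$-covered case, or the quasigeodesic/angle argument of \Cref{lem:nointervalcollapse} in the skew Anosov case) run the same spike-region-producing argument as in the proof of \Cref{lem:nointervalcollapse} to contradict \Cref{finitespikes}. Hence $\partial_\infty\Omega_\lambda\cap G$ is empty or a single point, and in either case the frontier components with span in $G$ are linearly ordered and each consecutive pair shares an ideal endpoint (they cannot be separated by a point of $\partial_\infty\Omega_\lambda$), so they form a frontier chain $C$ with span $G$. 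By \Cref{lem:bounded_chain}, $C$ is finite, which is exactly the assertion.

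The step I expect to be the main obstacle is ruling out the possibility that $C$ is infinite \emph{without} directly invoking \Cref{lem:bounded_chain} when $C$ has no limit subchain --- that is, making sure the dichotomy ``either some subchain is a limit chain (producing a spike region, hence finiteness via \Cref{finitespikes}) or \Cref{findgoodray} applies'' genuinely covers a gap. Concretely: if $\ol G \subset$ a gap and $C$ had a limit subchain, then leaves in $\Omega_\lambda$ accumulating on that subchain give leaves in $\lambda$ with coincident endpoints, hence a spike region by \Cref{lem:spike}, and infinitely many such would contradict \Cref{finitespikes}; whereas if no subchain of $C$ is a limit chain, \Cref{findgoodray} produces a foliation ray $r\subset\Omega_\lambda$ ending in $C$ whose lift $\wt r$ stays a bounded distance from orbits associated to periodic points of $C$. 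In that case $\wt r(\infty)$ is a point of $\del\lambda$ lying in the interval bounded by the endpoints of those lifted orbits; but since $\ol G$ is collapsed to $\sigma$, those orbit-endpoints are also pinched to $\sigma$, forcing $\wt r(\infty)=\sigma$ while $r$ has its endpoint mapped outside $G$ only if it ends at an ideal point of $C$ --- one checks this is consistent only when $C$ is finite, again via the \Cref{zplusz} bound on leaves fixed by a common group element. I would organize the argument so that \Cref{lem:bounded_chain} is simply cited once $C$ is identified as a frontier chain spanning $G$, and spend the real work on establishing that $G$ is indeed the span of a single frontier chain, i.e. on the ``no intermediate limit set'' claim, which is the genuine content beyond the lemmas already proved.
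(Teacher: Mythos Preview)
Your proposal has a genuine error in the second paragraph. You claim that $\partial_\infty\Omega_\lambda \cap G$ is empty or a single point, but this is false: the theorem itself asserts that gaps are spanned by chains $\ell_1,\ldots,\ell_n$, and when $n\ge 2$ the shared ideal endpoints of consecutive $\ell_i$'s give $n-1$ points of $\partial_\infty\Omega_\lambda$ lying in $G$. The argument you sketch---invoking the spike-region mechanism from the proof of \Cref{lem:nointervalcollapse} and \Cref{iseethearch}---only rules out \emph{open intervals} of $\partial_\infty\Omega_\lambda$ inside $G$; it says nothing about two or more isolated points separated by spans of frontier components. So your plan to first assemble all frontier components with span in $G$ into a single chain $C$ and then simply cite \Cref{lem:bounded_chain} breaks down at the assembly step: you have not ruled out that $\partial_\infty\Omega_\lambda\cap G$ is infinite with accumulation points, in which case the frontier components do not form one chain at all but rather infinitely many maximal chains.

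This is exactly where the paper's argument lives. The paper proceeds by contradiction: if $G$ is not spanned by a finite chain, then (using \Cref{lem:nointervalcollapse} to rule out intervals) $G$ contains infinitely many \emph{maximal} frontier chains. The limit-subchain dichotomy is then applied to this infinite family of chains, not to a single putative chain $C$. Infinitely many of them with limit subchains produce infinitely many spike regions at the common image point $\sigma$, contradicting \Cref{finitespikes}. Infinitely many of them with no limit subchain each yield, via \Cref{findgoodray}, a ray in $\lambda$ ending at $\sigma$ that fellow-travels the periodic flowlines of that chain; then \Cref{lem:omniFenley2}(i) forces all those periodic flowlines (across different chains) to be mutually (anti)homotopic, so all the chains are fixed by a common group element, contradicting \Cref{zplusz}. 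Your third paragraph gestures at this dichotomy but applies it to a single chain, where it yields no contradiction---the argument only bites once you have infinitely many distinct chains to play off against each other.
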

Note that by \Cref{lem:bounded_chain}, the length of such a chain is uniformly bounded.
\begin{proof}
Suppose by way of contradiction that there is a gap $G$ of $\pi_\lambda$ that is not spanned by a finite chain as in the statement. By \Cref{lem:nointervalcollapse}, there are infinitely many maximal frontier chains with both endpoints in $G$. Each of these chains either has a subchain which is a limit chain, or does not.

Suppose there are infinitely many chains with limit subchains. As before, each limit subchain whose span has closure contained in a single gap gives rise to a spike region of $\lambda$. 
Hence we can find arbitrarily many spike regions in $\lambda$ with the same ideal point, contradicting \Cref{finitespikes}.

Otherwise, there are infinitely many of these frontier chains having no limit chain as a subchain. Note that all these leaves are branching or contain blown segments. 
By \Cref{findgoodray}, for each chain we can find a foliation ray in $\lambda$ (coming from one in $\Omega_\lambda$) staying a bounded distance from the periodic flowlines corresponding to that chain.

All these rays in $\lambda$ have the same ideal point because they are associated to chains in our fixed gap $G$. By \Cref{lem:omniFenley2}, any $2$ of these rays have sequences of points escaping to their positive ends that stay bounded distance from each other. This means the flowlines have the same property, up to enlarging the bound. These flowlines are quasigeodesics, because they are lifts of closed curves. This implies that the projections of these flowlines is a family of pairwise homotopic or antihomotopic periodic orbits.

We conclude that all the non-limit chains are fixed by a common group element corresponding to the above homotopy class, contradicting \Cref{zplusz}. 
\end{proof}

\section{Minimal universal circles}\label{sec:minimal} \label{sec:skewnonregulating}

In this section we investigate the minimality of our universal circles.

\medskip

Given a universal circle, it is possible to obtain a new universal circle by performing a \emph{Denjoy blowup} on the orbit of a point; see \cite[\S 5.1]{Calegari_promoting}. This operation replaces every point in the orbit by a closed interval, and the resulting universal circle contains intervals which are collapsed by every leafwise monotone map. A universal circle $(\univ, \{m_\lambda\}, \rho)$ is \emph{minimal} if no open interval in $\univ$ is contained in a gap of each $m_\lambda$. Calegari showed that every non-minimal universal circle can be obtained from Denjoy blowups on a minimal universal circle \cite[Lemma 5.1.2]{Calegari_promoting}. 

On the other hand, the word minimal is also applied to group actions. These homonyms are related in a basic way:

\begin{lemma}\label{ifminthenmin}
Let $\mathscr C=(\univ, \{m_\lambda\}, \rho)$ be a universal circle for $\FF$. If the associated action $\pi_1(M) \curvearrowright \univ$ is minimal, then $\mathscr C$ is a minimal universal circle.
\end{lemma}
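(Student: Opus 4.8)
The plan is to argue by contrapositive: assuming $\mathscr C$ is \emph{not} a minimal universal circle, I will produce an open interval in $\univ$ whose $\pi_1(M)$-orbit is not dense, showing the action is not minimal. So suppose there is an open interval $J \subset \univ$ that lies in a gap of every $m_\lambda$. The key observation is that condition (2) of \Cref{UCdef} makes the collection of gaps $\pi_1(M)$-equivariant: for $g \in \pi_1(M)$ and $\lambda \in \LL$, the commuting square relating $m_\lambda$, $m_{g\lambda}$, $\rho(g)$, and the isometry $g\colon \del\lambda \to \del(g\lambda)$ shows that $\rho(g)$ carries gaps of $m_\lambda$ to gaps of $m_{g\lambda}$. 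Hence $\rho(g)(J)$ lies in a gap of $m_{g\lambda}$ for every $\lambda$, i.e. $\rho(g)(J)$ again lies in a gap of every leafwise map (as $\lambda$ ranges over all of $\LL$, so does $g\lambda$). Therefore the union $U$ of all $\pi_1(M)$-translates of $J$ is an open set, invariant under $\rho(\pi_1(M))$, every component of which lies in a common gap of every $m_\lambda$.

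The remaining point is that $U$ cannot be all of $\univ$. If it were, then by invariance and openness $U = \univ$, but $\univ$ is connected and compact while $U$ is a union of intervals each contained in a single gap of $m_\lambda$ for a \emph{fixed} but arbitrary $\lambda$; since $m_\lambda$ is a monotone \emph{surjection} onto the circle $\del\lambda$, its gaps are disjoint open intervals whose union is a proper subset of $\univ$ (the core of $m_\lambda$ is nonempty, indeed uncountable, as $m_\lambda$ has degree $1$). So $U \subsetneq \univ$. Its complement $\univ \setminus U$ is then a nonempty, closed, $\rho(\pi_1(M))$-invariant proper subset, so no orbit meeting $U$ is dense, and the action $\pi_1(M) \curvearrowright \univ$ is not minimal. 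Contrapositively, minimality of the action forces $\mathscr C$ to be a minimal universal circle.

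\textbf{Main obstacle.} The genuinely substantive step is verifying that the family of gaps is equivariant, i.e. extracting from the commuting diagram in condition (2) that $\rho(g)$ maps gaps of $m_\lambda$ bijectively onto gaps of $m_{g\lambda}$. This is not hard — it is a formal consequence of $m_{g\lambda} \circ \rho(g) = g \circ m_\lambda$ together with the fact that $g\colon\del\lambda\to\del(g\lambda)$ is a homeomorphism, so point-preimages correspond — but it is the place where the actual universal-circle axioms (as opposed to soft point-set topology) enter, and it should be stated carefully. The other potential subtlety, that a monotone degree-one self-map of a circle onto a circle has nonempty core, is standard and can be cited from the discussion of monotone maps in \Cref{sec:order}. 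Everything else is elementary topology of group actions on the circle.
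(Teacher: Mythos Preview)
Your proof is correct, and it rests on the same two ingredients as the paper's: equivariance of the gaps under $\rho$ (extracted from condition (2) of \Cref{UCdef}), and nonemptiness of the core of each $m_\lambda$. The difference is packaging. You argue by contrapositive, building the invariant open set $U=\bigcup_g \rho(g)(J)$ and observing its complement is a nonempty proper closed invariant set. The paper argues directly: given any open interval $I$, pick any leaf $\lambda$, take an endpoint $p$ of some gap of $m_\lambda$ (so $p$ lies in the core), and use minimality of the action to push $p$ into the interior of $I$ via some $g$; then $\rho(g)(p)$ is an endpoint of a gap of $m_{g\lambda}$, so $I$ cannot lie in a gap of $m_{g\lambda}$. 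This is a couple of lines shorter and avoids the connectedness bookkeeping about components of $U$, but neither approach is deeper than the other. One small wrinkle in your write-up: the clause ``so no orbit meeting $U$ is dense'' does not actually follow (since $U$ could be dense), but it is unnecessary --- you already have that $\univ\setminus U$ is a nonempty proper closed invariant subset, which is exactly non-minimality, so just delete that clause.
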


\begin{proof}
Let $I\subset S^1$ be an interval, and let $G$ be a gap of $m_\lambda$ for some leaf $\lambda$ of $\wt\FF$. Let $p$ be an endpoint of $G$. Choose $g\in \pi_1(M)$ so that $g\cdot p$ lies interior to $I$. Note that $g\cdot p$ is the endpoint of a gap of $\pi_{g\cdot \lambda}$, so $I$ is not contained in a gap of $\pi_{g\cdot \lambda}$.
\end{proof}

Hence, the combination of \Cref{simUC}, \Cref{prop:minimal}, and \Cref{ifminthenmin} imply that universal circles associates to non $\R$-covered flows are minimal.

\smallskip

We say that $\phi$ is \emph{regulating} for $\FF$ if each flowline of $\wt \phi$ intersects every leaf of $\wt \FF$. Hence $\phi$ is regulating for $\FF$ exactly when the shadow of any leaf of $\wt \FF$ is equal to $\orb$.
Note that if $\phi$ is regulating for $\FF$, then $\mathscr C_\phi=(\del \orb, \rho_\phi, \{\pi_\lambda\})$ is minimal, because no interval of $\del\orb$ is contained in a gap of \emph{any} $\pi_\lambda$ by \Cref{lem:nointervalcollapse}.

In light of this, it only remains to consider the case where $\phi$ is skew Anosov and not regulating for $\FF$.

Fix an identification of $\orb$ with $\{(x,y)\mid |y-x|< 1\}$ so that $\orb^s$ consists of vertical lines and $\orb^u$ consists of horizontal lines, let $\del_+\orb$ be the ``top" of the flow space, i.e. the line $y=x+1$, and let $\del_-\orb$ be the line $y=x-1$.

In \cite[Section 7]{fenley2005regulating}, Fenley shows that this situation is quite rigid. In fact, up to collapsing any trivial pockets of parallel leaves, $\FF$ is topologically conjugate to the stable or unstable foliation of $\phi$. More relevant to our current discussion is that every shadow $\Omega_\lambda$ of a leaf $\lambda$ of $\wt \FF$ is a ``triangle": it has two frontier components, a stable leaf and an unstable leaf making a perfect fit. Moreover, the ideal points corresponding to these perfect fits either all lie in $\del_+\orb$ or all lie in $\del_-\orb$ (see \cite[Prop. 7.7]{fenley2005regulating}). In particular, either $\del_+\orb$ or $\del_-\orb$ is collapsed to a point by each $\pi_\lambda$. This gives:

\begin{proposition}\label{prop:when_min}
Under the conditions of \Cref{maintheorem}, 
the universal circle $\mathscr C_\phi=(\del \orb, \rho_\phi, \{\pi_\lambda\})$ is minimal if and only if the following conditions are not all satisfied:
\begin{itemize}
\item $\varphi$ is skew Anosov,
\item $\FF$ is $\R$-covered,
\item $\varphi$ is nonregulating for $\FF$. 
\end{itemize}
\end{proposition}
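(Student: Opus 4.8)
The plan is to prove the two implications of the equivalence separately, taking advantage of the structural results already assembled in the excerpt. For the ``if'' direction, I would argue the contrapositive: assuming the three bulleted conditions all hold, I need to exhibit an interval of $\del\orb$ collapsed by every $\pi_\lambda$. This is essentially done in the discussion preceding the proposition: by Fenley's rigidity (\cite[\S 7]{fenley2005regulating}, \cite[Prop.~7.7]{fenley2005regulating}), each shadow $\Omega_\lambda$ is a triangle whose two frontier components make a perfect fit at an ideal point, and all such ideal points lie on a single fixed side $\del_\pm\orb$. I would then invoke \Cref{monotonemaps} / \Cref{frontiercollapse}: the span of a frontier chain is collapsed by $\pi_\lambda$, and since for every $\lambda$ the span of its frontier triangle exhausts one of the two components of $\del\orb\setminus\{$the two shared ideal points$\}$, the fixed side $\del_\pm\orb$ (minus at most a dense set of endpoints, hence an honest interval) lies in a gap of $\pi_\lambda$ for \emph{every} $\lambda$. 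Thus $\mathscr C_\phi$ is not minimal.

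For the ``only if'' direction I would show that whenever one of the three conditions fails, $\mathscr C_\phi$ is minimal, splitting into the three cases already sketched. (i) If $\phi$ is non $\R$-covered, then the action $\pi_1(M)\curvearrowright\del\orb$ is minimal by \Cref{prop:minimal}, so $\mathscr C_\phi$ is a minimal universal circle by \Cref{ifminthenmin} (using \Cref{simUC} to know it is a universal circle at all). (ii) The case where $\phi$ is skew Anosov but $\FF$ is not $\R$-covered: here I claim $\phi$ must in fact be regulating for $\FF$ — this is where I would need to be careful, since the reduction in the excerpt treats ``$\R$-covered'' and ``regulating'' somewhat interchangeably in the skew case. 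The point is Fenley's theorem \cite[Theorem C]{fenley2005regulating}: an almost transverse flow to a non-$\R$-covered foliation is non-$\R$-covered, so skew Anosov forces $\FF$ to be $\R$-covered; hence if $\FF$ is not $\R$-covered and $\phi$ is skew Anosov this case is vacuous, or falls under case (iii)/(i). (iii) If $\phi$ is regulating for $\FF$, then every shadow equals $\orb$, so no interval of $\del\orb$ lies in a gap of any $\pi_\lambda$ by \Cref{lem:nointervalcollapse}, giving minimality directly. If $\phi$ is trivial Anosov it is regulating (or $M$ is toroidal, excluded), so that subcase is covered.

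The main obstacle I anticipate is \emph{bookkeeping the logical structure of the three conditions} rather than any single hard estimate: the three bulleted hypotheses are not independent (skew Anosov plus non-$\R$-covered $\FF$ is impossible, and trivial Anosov is automatically regulating), so the ``only if'' direction really only has one substantive case — the non $\R$-covered flow case via \Cref{prop:minimal} and \Cref{ifminthenmin} — while the remaining failures of the hypotheses are handled by the regulating/\Cref{lem:nointervalcollapse} argument or are vacuous by Fenley's $\R$-covered dichotomy. I would structure the write-up so that this case analysis is explicit: first observe that if $\phi$ is not skew Anosov then either it is trivial Anosov (regulating, hence minimal by \Cref{lem:nointervalcollapse}) or non $\R$-covered (minimal by \Cref{prop:minimal} and \Cref{ifminthenmin}); then if $\phi$ is skew Anosov, either it is regulating (minimal by \Cref{lem:nointervalcollapse}) or it is nonregulating, in which case $\FF$ is forced to be $\R$-covered by \cite[Theorem C]{fenley2005regulating} and the triangle-shadow picture of \cite[Prop.~7.7]{fenley2005regulating} produces the collapsed interval, so $\mathscr C_\phi$ fails to be minimal. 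Assembling these observations gives the stated equivalence.
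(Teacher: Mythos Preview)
Your proposal is correct and follows essentially the same route as the paper: non-$\R$-covered flows give minimality via \Cref{prop:minimal} and \Cref{ifminthenmin}, regulating flows give minimality via \Cref{lem:nointervalcollapse}, and the skew nonregulating case produces a common collapsed interval (one of $\del_\pm\orb$) via Fenley's triangle-shadow picture. Two small points to clean up: your ``if'' and ``only if'' labels are swapped (proving ``all three conditions $\Rightarrow$ not minimal'' is the contrapositive of the \emph{only if} direction), and the trivial Anosov case is already excluded by the standing atoroidality hypothesis, so you need not treat it separately.
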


\section{Depth one foliations}

In this final section we prove a rigidity result, \Cref{thm:depth1rigidity}.

\subsection{Boundary dynamics of first return maps}

Before we begin, we need to rule out a certain structure in the flow space $\orb$. A \emph{$g$-sail of size $n$} is a disk $D$ in $\orb$ bounded by $r^s, r^u, \ell_1,\dots, \ell_n$ where: 
\begin{itemize}
\item$r^u$ and $r^s$ are adjacent stable and unstable leaf rays starting at a point $p\in \orb$ called the \emph{tip} of the sail,
\item $\ell_1,\dots, \ell_n$ are leaf slices such that $\ell_1$ shares an ideal point with $r^s$, $\ell_n$ shares an ideal point with $r^u$, and $r_i$ shares an ideal point with each of $r_{i-1}$ and $r_{i+1}$ for $1<i<n$, and
\item all of $r^s, r^u, \ell_1,\dots, \ell_n$ are fixed by $g$.
\end{itemize} 
See \Cref{fig:sail}.

\begin{figure}
\begin{center}
\includegraphics[]{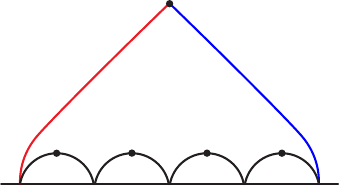}
\caption{A $g$-sail of size 4, where the indicated points are fixed by a common group element $g$. The black leaf slices are allowed to be stable or unstable.}
\label{fig:sail}
\end{center}
\end{figure}

\begin{lemma}\label{nosails}
Let $\phi$ be a pseudo-Anosov flow. Then its flow space $\orb$ has no $g$-sails.
\end{lemma}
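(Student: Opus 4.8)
The plan is to assume a $g$-sail $D$ exists, extract from it a closed chain of lozenges, and contradict atoroidality. Write $p$ for the tip, $r^s,r^u,\ell_1,\dots,\ell_n$ for the boundary pieces, $v_0=r^s(\infty)$, $v_n=r^u(\infty)$, and $v_i$ (for $1\le i\le n-1$) for the ideal point shared by $\ell_i$ and $\ell_{i+1}$. Since $g$ fixes $p$, the point $p$ is periodic; after replacing $g$ by a power I may assume $g$ also fixes the stable and unstable half-leaves through $p$ and every corner point of every lozenge produced below. By the discussion in \Cref{sec:lozenges}, $p$ is then a corner of a lozenge and $\Fix(g)\cap\orb$ is a connected union of $g$-fixed lozenges whose corner points are pairwise joined by chains of lozenges.

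The heart of the argument is to show the boundary of $D$ forces a \emph{cyclic} chain. The pieces $r^s,\ell_1,\dots,\ell_n,r^u$ are $g$-fixed periodic leaf slices, and consecutive pieces share an ideal point and hence make a perfect fit; by the facts recalled in \Cref{sec:lozenges} and \Cref{sec:branching} each consecutive pair is joined by a chain of $g$-fixed lozenges, and each $\ell_i$ contains a corner point. Now $r^s$ lies in the stable leaf $\orb^s(p)$ and $r^u$ in the unstable leaf $\orb^u(p)$, both of which pass through the \emph{single} corner $p$. Concatenating the chains joining consecutive boundary pieces — and, where needed, chains joining the relevant corner points of $\orb^s(p)$ and $\orb^u(p)$ back to $p$ — I obtain a chain of lozenges starting and ending at $p$: it leaves $p$ through the quadrant bounded by $r^s$ and returns through the \emph{adjacent} quadrant bounded by $r^u$. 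Since this loop encircles the disk $D$, it is a genuine cycle of lozenges and not a chain that backtracks. Moreover, along any chain of lozenges the two corners of each lozenge correspond to periodic orbits homotopic to each other's inverses (\Cref{sec:lozenges}), so all corner orbits occurring here are freely homotopic to powers of $\gamma_p^{\pm1}$.

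It then remains to contradict atoroidality. This follows from the known structure theory of $\Fix(g)$: in an atoroidal manifold the $g$-fixed lozenges form a string with no cycles (Fenley). The underlying mechanism is that a cyclic chain of lozenges produces, by gluing the immersed annuli dual to its lozenges along their shared boundary orbits and closing up, a $\pi_1$-injective closed surface of Euler characteristic zero — hence a $\Z\oplus\Z$ in $\pi_1(M)$, which is exactly the kind of obstruction ruled out via Shalen's theorem in the proofs of \Cref{sink} and \Cref{zplusz}. The cycle we produced violates this, completing the proof. In the remaining case, where $\phi$ is $\R$-covered and hence skew Anosov, $\orb$ is the diagonal strip with its product stable/unstable foliations, and one checks directly from that product structure that no leaf slice can share an ideal point with two adjacent rays issuing from a common point as a sail requires, so again no $g$-sail exists.

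The hard part will be the middle step: making rigorous that the sail's boundary really does produce a closed, $\pi_1$-essential cycle of lozenges rather than a chain that doubles back on itself, and carefully invoking the standard dictionary relating chains of lozenges, free homotopy classes of periodic orbits, and essential tori in $M$.
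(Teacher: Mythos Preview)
Your approach has a genuine gap that you yourself flag in the last paragraph, and in fact the hoped-for cycle of lozenges cannot exist. The first lozenge of your chain at $p$ (coming from the perfect fit of $r^s$ with the next ray toward $\ell_1$) and the last lozenge at $p$ (from the perfect fit of $r^u$ with $\ell_n$) both lie in the \emph{same} quadrant at $p$---the one bounded by $r^s$ and $r^u$ and containing $D$. But a given quadrant supports at most one lozenge with corner $p$: its sides along $r^s$ and $r^u$ force the missing corners to be $v_0=r^s(\infty)$ and $v_n=r^u(\infty)$, which in turn pin down the opposite corner $q$. So your chain does not ``leave through one quadrant and return through an adjacent one''; it enters and exits $p$ through the very same lozenge $L$, and no cycle is formed. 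What remains after removing $L$ is the same configuration one level in, with tip $q$, and this is exactly the inductive structure the paper exploits: it shows $L\subset A$, then observes that $A\smallsetminus L$ is a $g$-sail of size $n-1$ (if $q\in\ell_1$ or $q\in\ell_n$) or a union of at least three strictly smaller $g$-sails (if $q\in\intr(A)$). The base case $n=0$ is the elementary fact that two adjacent half-leaves at a point never share an ideal endpoint in $\partial\orb$.

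Even had a cycle appeared, your endgame is not justified: the assertion that a cyclic chain of lozenges glues to a $\pi_1$-injective torus is not established anywhere in the paper, and the invocations of \Cref{zplusz} and \Cref{sink} (which bound numbers of fixed branching leaves and identify sinks/sources on $\partial\orb$) do not supply it. Note too that the paper's argument is a purely combinatorial induction in $\orb$ and makes no use of atoroidality of $M$, so your separate ad hoc treatment of the skew case is unnecessary.
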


\begin{proof}
We first observe that there can be no $g$-sail of size 0, because no two half leaves based at the same point in $\orb$ terminate at the same point in $\del\orb$.

Proceeding by induction, suppose there are no $g$-sails of size less than $n$, for any $g\in \pi_1(M)$, and suppose that $A$ is a $g$-sail of size $n$ bounded by $r^s, r^u, \ell_1,\dots, \ell_n$. The tip of $A$ must be the corner point of a lozenge $L$ contained in $A$. Let $q$ be the other corner point of this lozenge. Then it is either the case that $q$ lies in $\ell_1$, $\ell_n$, or $\intr(A)$. If $q\in \ell_1$, then $A-L$ is a $g$-sail of size $n-1$, a contradiction. If $q\in \intr(A)$, then $q$ is the tip of $\ge3$ $g$-sails contained in $A$ (3 if $q$ is a regular point, more if $q$ is singular). All of these have smaller size than $A$ since the half leaves from $q$ must end at separate ideal points of $A$, so this is also a contradiction.
\end{proof}

We will also need the following proposition.

\begin{proposition}
\label{prop:return}
Let $\varphi$ be an almost pseudo-Anosov flow on $M$ and let $\mc F$ be a foliation transverse to $\phi$. Let $\lambda_M$ be a leaf of $\mc F$ with a well-defined first return map $f \colon \lambda_M \to \lambda_M$. 

Let $\lambda$ be the universal cover of $\lambda_M$. Then for any fixed point $\wt p$ of a lift $\wt f$ of a power of $f$ to $\lambda$, the half-leaves based at $\wt  p$ determine distinct endpoints on $\partial\lambda$. 

Moreover, the induced action of $\wt f$ on $\partial \lambda$ has multi sink-source dynamics.
\end{proposition}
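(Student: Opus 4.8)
The plan is to prove the two assertions of Proposition~\ref{prop:return} in sequence, using the $g$-sail non-existence result (\Cref{nosails}) together with the structure theory of periodic points and lozenges from \Cref{sec:lozenges}.

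First I would set up the correspondence between fixed points of $\wt f$ and periodic data in the flow space. A fixed point $\wt p$ of a lift $\wt f$ of a power $f^k$ of the first return map corresponds, after lifting to $\wt M$, to a point that flows to a periodic orbit of $\phi$ (roughly: the flow segment from $\lambda_M$ back to itself, closed up by $f^k$, is a periodic orbit $\gamma$ of $\phi$, and $\wt p$ projects to the corresponding periodic point $\mathbf p \in \orb$ fixed by the element $g \in \pi_1(M)$ represented by $\gamma$). The half-leaves of $\lambda^{s/u}$ based at $\wt p$ correspond under the identification $\lambda \cong \Omega_\lambda$ to the half-leaves of $\orb^{s/u}$ based at $\mathbf p$, and $g$ is coherent with respect to $\lambda$ since $\gamma$ meets $\lambda$. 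For the first assertion, suppose for contradiction that two distinct half-leaves $H_1, H_2$ based at $\wt p$ determine the \emph{same} ideal point $\sigma \in \partial\lambda$. Via the monotone map $\pi_\lambda \colon \partial\orb \to \partial\lambda$ and the analysis of its gaps (\Cref{thm:gaps}), having two half-leaves based at $\mathbf p$ with the same image in $\partial\lambda$ means the corresponding rays in $\orb$ either share an ideal point in $\partial\orb$ — hence are joined by a chain of perfect fits, all fixed by a power of $g$ by \cite[Observation 2.10]{BFM} — or they bound a region whose frontier in $\partial\orb$ is collapsed, which by \Cref{thm:gaps} is again spanned by a finite frontier chain of leaves, all of which (being periodic and sharing ideal points with $g$-fixed leaves) are fixed by a common power of $g$. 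In either case the two half-leaves at $\mathbf p$ together with this chain of $g$-fixed leaf slices form a $g$-sail, contradicting \Cref{nosails}. (One must also handle the singular/blowup case, where $\mathbf p$ may be a corner of a lozenge or meet blowup segments, but the chain-of-lozenges picture still assembles a sail.) This is the step I expect to be the main obstacle: carefully translating ``equal endpoints in $\partial\lambda$'' into ``bounds a $g$-sail in $\orb$,'' since one must show the intervening leaf slices closing up the sail are genuinely $g$-fixed and not merely collapsed by $\pi_\lambda$ for softer reasons — this is where \Cref{thm:gaps} and \Cref{zplusz} do the real work of pinning everything down to a single group element.

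Granting the first assertion, I would prove multi sink-source dynamics as follows. Replacing $f$ by a power, assume $\wt f$ fixes each half-leaf based at $\wt p$ and each of their ideal endpoints. By the first assertion, the $2m \ge 4$ half-leaves at $\wt p$ (where $m \ge 2$ is the number of prongs, or $m=2$ in the regular case) determine $2m$ distinct points of $\partial\lambda$, cyclically ordered so that stable and unstable alternate, cutting $\partial\lambda$ into $2m$ arcs. On each such arc, bounded by the endpoint of a stable half-leaf and an adjacent unstable one, the local contraction/expansion of the flow forces $\wt f$ to push points toward one endpoint and away from the other: the stable direction is attracting for $f$ and the unstable direction is repelling (or vice versa, depending on the sign of the power and the direction of the return map), and this leafwise behavior near $\wt p$ propagates to the ideal endpoints because foliation rays in $\lambda$ stay a bounded Hausdorff distance from their geodesic representatives (\Cref{lem:omniFenley2}(i)) and from the orbit $\gamma$ lifted to $\lambda$ (compare the proximity analysis of \Cref{lem:coherent}). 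Thus the endpoints of the stable half-leaves are attracting and those of the unstable half-leaves are repelling (or the reverse), with no other fixed points on $\partial\lambda$: any other fixed point would have to be a fixed point of $g$ acting on $\partial\orb$ that survives under $\pi_\lambda$, but \Cref{sink} says the fixed points of $g$ on $\partial\orb$ are exactly the isolated endpoints of $g$-fixed leaves (sinks and sources), and their images under $\pi_\lambda$ are precisely the $2m$ points we have already identified. Hence $\wt f$ acts on $\partial\lambda$ with exactly $2m$ fixed points alternating attracting/repelling, which is by definition multi sink-source dynamics.

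The remaining technical points are routine: checking that passing to a power of $f$ is harmless (it only changes nothing about the set of fixed points or their attracting/repelling nature, and the statement is invariant under powers), and checking the degenerate count $m=2$ gives ordinary (four-point, hence still ``multi'') sink-source dynamics. I would organize the writeup as: (1) the dictionary between $\Fix(\wt f)$ and periodic points of $\orb$ fixed by $g$; (2) the sail argument for distinctness of endpoints; (3) the dynamics argument using leafwise contraction and \Cref{sink}.
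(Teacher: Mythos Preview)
Your argument for the first assertion is essentially the paper's: identify $\wt p$ with a periodic point $p\in\Omega_\lambda$ fixed by $g$, note $g$ fixes $\Omega_\lambda$, use \Cref{thm:gaps} to span the gap between the two ray endpoints by a finite frontier chain, observe the chain is $g$-fixed (because $g$ permutes frontier components and fixes the rays' ideal points), and collapse blowup segments to obtain a $g$-sail. This is fine.

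Your argument for multi sink-source dynamics has a genuine gap. You assert that the only fixed points of $\wt f$ on $\partial\lambda$ are the $2m$ half-leaf endpoints at $\wt p$, justifying this by saying \Cref{sink} tells you the fixed points of $g$ on $\partial\orb$ are ``exactly the isolated endpoints of $g$-fixed leaves'' and that these map under $\pi_\lambda$ to the $2m$ points. Both claims are wrong. \Cref{sink} only says that an endpoint of a $g$-fixed leaf is a sink or source; it does not say every fixed point of $g$ on $\partial\orb$ is such an endpoint. When $p$ is a corner, $g$ fixes an entire (possibly infinite) chain of lozenges, whose corner half-leaves contribute many more fixed points on $\partial\orb$ than the $2m$ rays at $p$, and when the chain is infinite the fixed set on $\partial\orb$ has accumulation points which are not endpoints of any $g$-fixed leaf at all. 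Nothing in your outline explains why these extra fixed points either disappear under $\pi_\lambda$ or coincide with the $2m$ points. Your ``local contraction/expansion'' heuristic for the sink/source nature of the $2m$ endpoints is also too vague: proximity of rays to geodesics does not by itself control the boundary dynamics on the intervening arcs.

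The paper's route is different and avoids counting. It fixes an arbitrary fixed point $\eta\in\partial\lambda$ of $\wt f$ and shows it is a sink or source by analyzing $\pi_\lambda^{-1}(\eta)$. If this preimage is a nontrivial interval, \Cref{thm:gaps} gives a $g$-fixed frontier chain whose endpoints in $\partial\orb$ are sinks/sources by \Cref{sink}, and monotonicity of $\pi_\lambda$ transfers this to $\eta$. If the preimage is a single point $x$, either $x$ is an isolated fixed point of $g$ (done by \Cref{sink}) or $x$ is an accumulation point of $g$-fixed points; the key step you are missing is that in the latter case $x$ must lie in the span of a frontier component of $\Omega_\lambda$---one shows this by running out the lozenge chain from $p$ toward $x$ and using \Cref{lem:coh} (coherence forces $\Omega_\lambda$ to miss every second corner, so the chain eventually exits $\Omega_\lambda$ and a frontier leaf separates $p$ from $x$). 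This contradicts $\pi_\lambda^{-1}(\eta)=\{x\}$, so the accumulation case does not arise.
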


\begin{proof}
Fix an identification of $\lambda$ with a leaf of $\wt \FF$ that projects to $\lambda_M$ and consider the deck transformation $g$ such that flowing $g \lambda$ upward to $\lambda$ induces $\wt f$.
Letting $p$ denote the projection of $\wt p$ to $\Omega_\lambda\subset \orb$, we have that $g  p =  p$ and $g$ fixes $\Omega_\lambda$. 

Up to replacing $g$ by a finite power, we can assume that $g\in \Stab_0(p)$ (recall this means $g$ fixes the half leaves at $p$). If the endpoints of these half-leaves are not distinct {in} $\partial \lambda$, then there is an adjacent pair of foliation rays $r^u$ and $r^s$ at $p$ so that $\pi_\lambda(r^u(\infty)) = \pi_\lambda(r^s(\infty))$. This implies that these endpoints are contained in a single gap of $\pi_\lambda$. By the characterization of gaps in \Cref{thm:gaps}, there is a chain of frontier components $\ell_1, \ldots, \ell_n$ such that
\begin{itemize}
\item either $\ell_1$ crosses $r^s$ or has a common endpoint, and 
\item either $\ell_n$ crosses $r^u$ or has a common endpoint. 
\end{itemize}

Since $g$ fixes $\Omega_\lambda$, it acts on its set of frontier leaves. In particular, it must fix the leaves $\ell_1, \ldots, \ell_n$, 
since it fixes $r^u(\infty)$ and $r^s(\infty)$. We conclude that if $r^s$ crosses $\ell_1$ then $r^s\cap \ol\Omega_\lambda$ is a blown segment. Otherwise, $r^s$ shares an ideal endpoint with $\ell_1$. The analogous statement holds for $r^u$ and $\ell_n$.
Collapsing blown segments produces a $g$-sail, contradicting \Cref{nosails}.

\smallskip

We now prove that $\wt f$ acts on $\del \lambda$ with multi sink-source dynamics.

Let $\eta \in \partial \lambda$ be any fixed point of $\wt f$. Again applying \Cref{thm:gaps}, if the preimage $\pi_\lambda^{-1}(\eta)$ is not a single point, then it is spanned by a finite chain of frontier leaves. This chain is fixed by $g$, so each leaf in the chain is fixed by $g$ also. 
The endpoints of these leaves in $\partial \orb$ gives a finite set of isolated fixed points of $g$, each of which is either a source or a sink in $\partial \orb$ by \Cref{sink}. 
Since the map $\pi_\lambda\colon \partial \orb \to \partial \lambda$ is monotone, to show that $\eta$ is a source or sink in $\partial \lambda$, it suffices to show that the endpoints of the gap 
$\pi_\lambda^{-1}(\eta)$ are either both sources or both sinks. 
But this is clear because the endpoints of any leaf fixed by $g$ whose endpoints are isolated fixed points are either both sources or both sinks, so the same is true for a finite chain.

It remains to consider the case where $\pi_\lambda^{-1}(\eta)$ is a single point $x\in \partial \orb$, necessarily fixed by $g$. Again applying \Cref{sink}, if $x$ is not a limit of points in $\partial \orb$ fixed by $g$, then it is a sink or a source. As above, we conclude the same for $\eta \in \partial \lambda$.

It therefore suffices to show that any fixed point of $g$ in $\partial \orb$ which is a limit of other fixed points is contained in an open interval spanned by a frontier component of $\Omega_\lambda$. 
If $x$ is such a fixed point, then as in the proof of \Cref{sink} we see that $x$ is the accumulation point of a chain of lozenges $(L_i)_{i\ge0}$ such that $p$ is a corner point of $L_0$ and for large $i$ the lozenges intersect only in their corners (see \Cref{fig:lozengechain}).
In particular for $i$ sufficiently large there is a leaf slice 
$\ell$ containing a side of $L_i$ that is disjoint from $\Omega_\lambda$ and separates $p$ from $x$. Since $\Omega_\lambda$ can intersect at most one corner of a given periodic lozenge by \Cref{lem:coh},
we can find a frontier component of $\Omega_\lambda$ separating $p$ from $x$. We conclude that $x$ is contained the span of this frontier component.
\end{proof}

\subsection{Depth one rigidity}

A foliation $\FF$ of $M$ is \emph{depth one} if every noncompact leaf accumulates only on compact leaves. By collapsing pockets of parallel leaves, we can always assume that a depth one foliation has only finitely many compact leaves. If $\FF_0$ denotes the union of the compact leaves, then each component of $M-\FF_0$ will fiber over the circle with fibers the leaves of $\FF$.

\begin{proposition}\label{onlyperfectfits}
Let $\mc F$ be a depth one foliation transverse to almost pseudo-Anosov flows $\varphi$ and $\psi$. If $\varphi$ has no perfect fits, then neither does $\psi$. 
\end{proposition}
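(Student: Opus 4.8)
We argue by contradiction, producing a lozenge for $\varphi$ out of a perfect fit of $\psi$; since the two flows are linked only through the common foliation $\mc F$, essentially everything must be routed through fiber leaves and \Cref{prop:return}. Suppose then that $\psi$ has a perfect fit. By results of Fenley, on a closed manifold a (almost) pseudo-Anosov flow with a perfect fit has a lozenge, and indeed one whose corners $p_1 \ne p_2 \in \orb_\psi$ project to periodic orbits and are fixed by a common $g \in \pi_1(M)$ (as recalled in \Cref{sec:lozenges}); after passing to a power we may assume $g \in \Stab_0(p_1) \cap \Stab_0(p_2)$. The associated periodic orbits $\gamma_1, \gamma_2$ of $\psi$ are freely homotopic, up to orientation and positive powers, and by \Cref{lem:coh} exactly one of $p_1, p_2$ is coherent with respect to any leaf for which both are ordering.

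Next I would localize each corner to a fiber. As $\psi$ is positively transverse to $\mc F$, the orbit $\gamma_i$ meets some leaf $\lambda_{i,M}$ of $\mc F$ in a fibered piece of $M \ssm \mc F_0$, on which $\psi$ has a well-defined first return map; lift to a leaf $\lambda_i$ of $\wt \FF$ so that $\wt p_i \in \Omega^\psi_{\lambda_i}$. Then $g \in \Stab_0(\wt p_i)$ acts on $\lambda_i$ as a lift of a power of the $\psi$--return map, so by \Cref{prop:return} its action on $\partial \lambda_i$ has multi sink--source dynamics and the half-leaves at $\wt p_i$ have distinct endpoints in $\partial \lambda_i$. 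Now invoke $\varphi$: it too is transverse to $\mc F$, so it has a first return map to $\lambda_{i,M}$, and the projection $\lambda_i \to \Omega^\varphi_{\lambda_i} \subseteq \orb_\varphi$ along $\wt\varphi$--flowlines is a $\pi_1(M)$--equivariant homeomorphism onto the shadow. Since $g$ fixes $\wt p_i$, it fixes the corresponding point $q_i \in \Omega^\varphi_{\lambda_i} \subseteq \orb_\varphi$; hence a power of $g$ is represented by a periodic orbit $\delta_i$ of $\varphi$ through $q_i$. A short computation with stabilizers -- using that $\pi_1(M)$ is torsion free and has no Klein bottle subgroup, so that $\Stab_{\orb_\varphi}(q_i)$ and $\Stab_{\orb_\psi}(p_i)$ are commensurable cyclic -- then identifies the free homotopy classes $[\delta_1]$ and $[\delta_2]$ with the (common, up to orientation) class of the lozenge's corner orbits.

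It then suffices to know that $q_1 \ne q_2$ in $\orb_\varphi$: in that case $\delta_1, \delta_2$ are distinct periodic orbits of $\varphi$ in a single free homotopy class, so by Fenley their orbit--space points are joined by a chain of lozenges, whence $\varphi$ has a lozenge and a perfect fit -- contradicting the hypothesis. Ruling out $q_1 = q_2$ is the main obstacle. If $q_1 = q_2 =: q$, then $\wt p_1$ and $\wt p_2$ both lie on the single $\wt\varphi$--flowline over $q$; since this flowline meets each leaf of $\wt \FF$ at most once and $\delta_q$ is positively transverse to $\mc F$, this forces $\lambda_1 \ne \lambda_2$ and forces $g$ to translate both $\lambda_1$ and $\lambda_2$ in the positive direction -- which I would play against the asymmetry supplied by \Cref{lem:coh} (one of $p_1, p_2$ coherent, the other not) by tracking, through the equivariant homeomorphisms $\Omega^\psi_{\lambda_i} \cong \lambda_i \cong \Omega^\varphi_{\lambda_i}$, the $\wt\FF$--orderings $g$ induces along $\wt\gamma_1^\psi$, $\wt\gamma_2^\psi$, and the $\wt\varphi$--flowline over $q$ simultaneously; alternatively one can try to exclude $q_1 = q_2$ by pushing the lozenge of $\psi$ into $\orb_\varphi$ and applying \Cref{nosails}. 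The persistent difficulty, and the reason \Cref{prop:return} and \Cref{nosails} are needed rather than a direct comparison of the two flows, is that there is no map between $\orb_\psi$ and $\orb_\varphi$: the only bridge between them is the leaf $\lambda_i$ together with its boundary circle inside $E_\infty$.
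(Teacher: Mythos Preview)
The central gap is the sentence ``Since $g$ fixes $\wt p_i$, it fixes the corresponding point $q_i \in \Omega^\varphi_{\lambda_i} \subseteq \orb_\varphi$.'' The element $g$ fixes the $\wt\psi$--flowline through $\wt p_i$, i.e.\ the point $p_i\in\orb_\psi$; it does \emph{not} fix the point $\wt p_i\in\wt M$ (deck transformations act freely), and there is no reason for it to fix the $\wt\varphi$--flowline through $\wt p_i$. Said differently: the leaf $\lambda_i$ gives homeomorphisms $\Omega^\psi_{\lambda_i}\cong\lambda_i\cong\Omega^\varphi_{\lambda_i}$, but these are not $g$--equivariant. What $g$ induces on $\lambda_i$ are two \emph{different} maps, the lifted return maps $\wt f_\psi$ and $\wt f_\varphi$; they are homotopic (hence agree on $\partial\lambda_i$) but need not share any interior fixed point. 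Your argument silently assumes that a fixed point of $\wt f_\psi$ is a fixed point of $\wt f_\varphi$, which is exactly the nontrivial content to be proved. This is why your invocation of \Cref{prop:return} goes unused: you state that $\wt f_\psi$ has multi sink--source boundary dynamics but never exploit it.

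The paper's proof fills precisely this gap. Since $\wt f_\psi$ and $\wt f_\varphi$ agree on $\partial\wt L$, \Cref{prop:return} (applied to $\psi$) shows that $\wt f_\varphi$ acts on $\partial\wt L$ with multi sink--source dynamics and at least four fixed points; then an external result (\cite[Theorem 4.1]{landry2023endperiodic}, valid because $\varphi$ has no perfect fits) converts this boundary behavior into an interior fixed point of $\wt f_\varphi$, i.e.\ a $\varphi$--periodic orbit in the given Nielsen class. For the distinctness issue you flag, the paper avoids your $q_1=q_2$ analysis entirely by starting not from a single lozenge but from a pair of \emph{anti-homotopic} $\psi$--orbits (Fenley's characterization of perfect fits); these cannot cross the compact leaves, and after transferring each to a $\varphi$--orbit in its fibered piece one obtains anti-homotopic $\varphi$--orbits, hence perfect fits for $\varphi$.
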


\begin{proof}
Suppose that $\varphi$ has no perfect fits.

Let $N_1, \ldots, N_k$ be the components of $M$ cut along $\FF_0$. Our proof will using some standard facts about the monodromies of depth one foliations which can be found in \cite{landry2023endperiodic}.

\begin{claim}
Every closed orbit of $\psi$ in $N_i$ is homotopic in $N_i$ to a closed orbit of $\varphi$. 
\end{claim}

To prove the claim, set $N = N_i$, let $L$ be any depth one leaf of $N$, and $f_{\phi/\psi} \colon L \to L$ be the associated first return map under $\phi/\psi$. Then $f_\phi$ and $f_\psi$ are homotopic. 
The claim is equivalent to the statement that every periodic point of $f_\psi$ is Nielsen equivalent to a periodic point of $f_\phi$. 

Suppose that $\wt p$ is a fixed point of some lift of some power of $f_\psi$, which we simply denote by $\wt f_\psi$. 
Let $\wt f_\phi$ be the corresponding lift of the same power of $f_\phi$, obtained by lifting a homotopy.  Then the actions of $\wt f_\phi$ and $\wt f_\psi$ on $\partial \wt L$ agree by \cite[Corollary 4.2]{CaCo13}.

By \Cref{prop:return}, $\wt f_\psi$ has at least $4$ fixed points on $\partial \wt L$ and acts with multi sink-source dynamics, so $\wt f_\phi$ does too. According to \cite[Theorem 4.1]{landry2023endperiodic}, $\wt f_\phi$ must also fix a point $\wt q$ in $\wt L$. (Technically, the statements there are for return maps to pseudo-Anosov \emph{suspension} flows, but the proofs go through without change when the flow has no perfect fits.) This establishes the required Nielsen equivalence and completes the proof of the claim.

\smallskip

Returning to the proof, suppose for a contradiction that $\psi$ has perfect fits. By results of Fenley, this is true if and only if $\psi$ has anti-homotopic orbits $\gamma_1$ and $\gamma_2$, i.e. periodic orbits such that $\gamma_1$ is homotopic to the inverse of $\gamma_2$ (\cite[Theorem B]{Fen16}, \cite[Theorem 4.8]{fenley1999foliations}). 
It is clear that neither of these orbits can intersect $\FF_0$, since each component of $\FF_0$ is dual to a cohomology class that is nonnegative on closed orbits. Hence, $\gamma_1 \subset N_i$ and $\gamma_2 \subset N_j$. (It is possible that $i=j$, but they cannot be anti-homotopic \emph{within} $N_i$ since each is positively transverse to an associated depth one leaf $L_i$.)

By the claim, each of these orbits is homotopic to a corresponding orbit of $\varphi$ by a homotopy supported in its associated component of $M \ssm \FF_0$. We conclude that $\varphi$ must also have anti-homotopic orbits and thus perfect fits, a contradiction.
\end{proof}

Applying Huang's result that a depth one foliation admits a unique transverse pseudo-Anosov flow without perfect fits (\cite[Corollary 1.3]{Huang_UC}), we conclude: 

\begin{theorem}\label{thm:depth1rigidity}
Suppose that the depth one foliation $\mc F$ is transverse to a pseudo-Anosov flow $\varphi$ without perfect fits. Then $\varphi$ is the unique pseudo-Anosov flow transverse to $\FF$ up to orbit equivalence.
\end{theorem}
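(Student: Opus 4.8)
The plan is to deduce \Cref{thm:depth1rigidity} from \Cref{onlyperfectfits} together with Huang's uniqueness result, after first upgrading the hypothesis from \emph{transverse} to the slightly more flexible setting needed. Concretely, suppose $\psi$ is any pseudo-Anosov flow transverse to $\FF$. Since $\varphi$ has no perfect fits, \Cref{onlyperfectfits} (applied with the roles as stated: $\FF$ depth one, transverse to both $\varphi$ and $\psi$, and $\varphi$ perfect-fit-free) tells us that $\psi$ has no perfect fits either. Now both $\varphi$ and $\psi$ are pseudo-Anosov flows without perfect fits transverse to the depth one foliation $\FF$, so by Huang's theorem (\cite[Corollary 1.3]{Huang_UC}), which asserts that a depth one foliation admits a \emph{unique} transverse pseudo-Anosov flow without perfect fits (up to orbit equivalence), we conclude $\psi$ is orbit equivalent to $\varphi$. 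This is exactly the assertion of the theorem.

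The one point requiring care is the scope of the quantifier ``unique pseudo-Anosov flow transverse to $\FF$'': a priori, Huang's result only controls flows \emph{without} perfect fits, and the novelty of \Cref{thm:depth1rigidity} over Huang's work is precisely that we drop the perfect-fit hypothesis on $\psi$. So the logical structure I would emphasize is: given an arbitrary transverse pseudo-Anosov flow $\psi$, \Cref{onlyperfectfits} is the bridge that forces $\psi$ to land in the class Huang can handle, and then Huang's uniqueness finishes the job. I would write the proof in two sentences more or less along these lines, citing \Cref{onlyperfectfits} and \cite[Corollary 1.3]{Huang_UC} explicitly.

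The main obstacle is not in this final deduction — which is essentially a one-line composition of two already-established results — but rather in \Cref{onlyperfectfits} itself, whose proof rests on \Cref{prop:return} (and hence on the gap characterization \Cref{thm:gaps}), on the Nielsen-equivalence/index machinery of \cite{landry2023endperiodic} and the boundary-dynamics comparison of \cite{CaCo13}, and on Fenley's characterization of perfect fits via anti-homotopic orbits. Since all of those are available as cited results by the time we reach this theorem, the proof of \Cref{thm:depth1rigidity} proper is short. If I wanted to be slightly more self-contained, I would add a remark recalling that ``transverse to $\FF$'' for a depth one foliation automatically means the flow restricts on each fibered piece $N_i$ to something with a well-defined first return map on the depth one leaves, which is what makes \Cref{prop:return} applicable inside \Cref{onlyperfectfits}; but this belongs to the proof of the proposition rather than the theorem.

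\begin{proof}
Let $\psi$ be any pseudo-Anosov flow transverse to $\FF$; we must show $\psi$ is orbit equivalent to $\varphi$. By hypothesis $\FF$ is depth one and transverse to both $\varphi$ and $\psi$, and $\varphi$ has no perfect fits. \Cref{onlyperfectfits} then implies that $\psi$ has no perfect fits either. Thus $\varphi$ and $\psi$ are both pseudo-Anosov flows without perfect fits transverse to the depth one foliation $\FF$. By Huang's uniqueness theorem for such flows (\cite[Corollary 1.3]{Huang_UC}), $\psi$ is orbit equivalent to $\varphi$.
\end{proof}
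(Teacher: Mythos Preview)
Your proposal is correct and matches the paper's approach exactly: the paper simply states the theorem as an immediate consequence of \Cref{onlyperfectfits} together with Huang's uniqueness result \cite[Corollary 1.3]{Huang_UC}, without even writing out a separate proof environment. Your two-sentence deduction is precisely the intended argument.
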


We expect \Cref{thm:depth1rigidity} holds with \emph{almost} transversality replacing transversality. Since \Cref{onlyperfectfits} is proved in that level of generality, it would suffice to generalize the result of Huang (\cite{Huang_UC}) to almost pseudo-Anosov flows transverse to depth one foliations.

\subsection*{Acknowledgements} 
We thank Sérgio Fenley for the many important results he has contributed to this area. We also thank Thomas Barthelmé, Junzhi Huang, and Chi Cheuk Tsang for helpful comments.

\bibliography{universal_circles.bib}
\bibliographystyle{amsalpha}
\end{document}